\newcommand{\diff}{\mathrm{d}}
\newtheorem{theorem}{Theorem}[section]
\newtheorem{lemma}{Lemma}
\newtheorem{remark}{Remark}
\newtheorem{definition}{Definition}
\newtheorem*{main-theorem}{Main Theorem}
\newtheorem*{remark*}{Remark}
\newtheorem*{lemma*}{Lemma A.1}
\numberwithin{equation}{section}
\begin{document}

\title[the viscous Saint-Venant system
for shallow waters]{Well-posedness of classical solutions to the vacuum free boundary
problem of the viscous Saint-Venant system
for shallow waters}

\author{Hai-Liang Li}

\author{Yuexun Wang}

\author{Zhouping Xin}

\address{ School of Mathematics and CIT, Capital Normal University, Beijing 100048, P. R. China.}
\email{hailiang.li.math@gmail.com}

\address{	
	School of Mathematics and Statistics,
	Lanzhou University, 730000 Lanzhou, China.}

\email{yuexunwang@lzu.edu.cn}

\address{The Institute of Mathematical Sciences, The Chinese University of Hong Kong, Hong Kong}

\email{zpxin@ims.cuhk.edu.hk}

\thanks{}

\begin{abstract} We establish the local-in-time well-posedness of classical solutions to the vacuum free boundary
problem of the viscous Saint-Venant system
for shallow waters derived rigorously from incompressible
Navier-Stokes system with a moving free surface by Gerbeau-Perthame \cite{MR1821555}. Our solutions (the height and velocity) are smooth (the solutions satisfy the equations point-wisely) all the way to the moving boundary, although the height degenerates as a singularity of the distance to the vacuum boundary. The proof is built on some new higher-order weighted energy functional and weighted estimates associated to the degeneracy near the moving vacuum boundary.
\end{abstract}
\maketitle

\section{Introduction }

The one-dimensional compressible isentropic Navier-Stokes equations with the density-dependent
viscosity coefficient are given by
\begin{equation}\label{eq:intro-1}
	\left\{
	\begin{aligned}
		&\rho_t+(\rho u)_x=0,\\
		&(\rho u)_t+(\rho u^2)_x+p_x
=(\mu(\rho)u_x)_x,
	\end{aligned}
	\right.
\end{equation}
where $(x,t)\in\mathbb{R}\times\mathbb{R}_+ $, and $\rho(x,t)\geq 0,u(x,t)$ and $p=\rho^\gamma\ (\gamma>1)$ stand for the  density, velocity, and pressure, respectively. And $\mu(\rho)=\rho^\alpha\ (\alpha\geq 0)$ is the viscosity coefficient.

There is a vast body of literature on the long time existence and asymptotic behavior of solutions to the system \eqref{eq:intro-1} in the case that the viscosity $\mu(\rho)$ is constant, i.e., \(\alpha=0\). When the initial density is strictly away from vacuum (\(\inf_{x\in\mathbb{R}} \rho_0(x) > 0\)),
the global existence of strong solutions was addressed for sufficiently smooth data by Kazhikhov et
al. \cite{MR0468593}, and for discontinuous initial data by Serre \cite{MR870700} and Hoff \cite{MR896014}, respectively.
The crucial point to establish such global existence of strong solutions lies in the fact that if the initial density is positive, then the density is positive for any later-on time as well. This fact is also proved to be true
for weak solutions by Hoff and Smoller \cite{MR1814847}, namely weak solutions
do not contain vacuum states in finite time as long as there is no vacuum initially.
When the initial density contains vacuum, the problem becomes subtle.
In fact, the appearance of vacuum indeed leads to some singular behaviors of solutions, such as the failure of continuous dependence of weak solutions on
initial data \cite{MR1117422} and the finite time blow-up of smooth solutions \cite{MR3360663, MR1488513}, and even non-existence of classical solutions with
finite energy \cite{MR3925527}.

Thus, when the solutions may contain vacuum states, it seems natural to investigate
the compressible Navier-Stokes equations with density-dependent viscosity.
Indeed, in the derivation of the
compressible Navier-Stokes equations from the Boltzmann equation by the Chapman-Enskog expansions,
as pointed out and investigated by Liu-Xin-Yang \cite{MR1485360}, the viscosity shall depend on the temperature and thus correspondingly depend on the density for isentropic flows.
Moreover, Gerbeau-Perthame \cite{MR1821555} derived rigorously a viscous Saint-Venant system for the shallow waters
which is expressed exactly to \eqref{eq:intro-1} with \(\alpha=1\) and \(\gamma=2\),  from the incompressible Navier-Stokes equation with a moving free surface. Such viscous compressible models with density-dependent viscosity coefficients and its variants also appear in geophysical flows \cite{MR1978317, MR1989675, MR2257849} (see also P.-L. Lions's book \cite{MR1637634}).
There are also extensive studies on the compressible Navier-Stokes equations with density-dependent viscosity.
When the initial density was assumed to be connected to vacuum with
discontinuities,
the local well-posedness of weak solutions to this problem was first established by Liu-Xin-Yang \cite{MR1485360}, and
the global existence of weak solutions for \(0<\alpha<1\) was considered by many authors,
see \cite{MR2254008} and the references therein. The above analysis relies heavily
on the fact that the density 
of the approximate solutions has a uniform positive lower bound in the non-vacuum region. 
When the density connects to
vacuum continuously, the density has no positive lower bound and thus the viscosity
coefficient vanishes at vacuum. This degeneracy in the viscosity coefficient gives rise to
some new difficulties. Despite of this, there is still much progress, for instance, one may refer to \cite{MR1929151} when \(\alpha>1/2\) for the local existence result, and \cite{MR1990849} for the global existence results of
weak solutions when \(0<\alpha<1/2\), in the free boundary
setting.
For \(\alpha>1/2\), some phenomena of vacuum vanishing and blow-up of solutions were found by Li-Li-Xin \cite{MR2410901}, more precisely, the authors proved that for any global entropy weak solution, the vacuum state must vanish within finite time, and the velocity blows up in finite time as the vacuum states vanish.
For the study on the asymptotic stability of rarefaction waves to this problem, one may refer to \cite{MR3116645} and the references therein.

Since P.-L. Lions' breakthrough work \cite{MR1226126, MR1637634},
there have also been much important progress for the multi-dimensional isentropic Navier-Stokes equations with the constant coefficients or density-dependent viscosity coefficients, see \cite{MR1779621, MR2914266, MR3862947, MR1978317, MR1867887, MR2877344,
LX2015, MR1810944, MR0564670, MR3573976, MR4011697, MR4195559} and the references therein.

The vacuum free boundary problem
of \eqref{eq:intro-1} had attracted a vast of attractions in the past two decades.
In the case that the viscosity is constant, Luo-Xin-Yang \cite{MR1766564} studied the global regularity and behavior of the weak solutions near the interface
when the initial density connects to vacuum states in a very smooth manner.  Zeng \cite{MR3303172} showed that the global existence of smooth solutions for which the smoothness extends all the way to
the boundary. In the case that the viscosity is density-dependent, the global existence of weak solutions was studied by many authors, see \cite{MR1990849} without external force, and \cite{MR2771263, MR2259332, MR2068308} with external force and the references therein.
By taking the effect of external force into account, Ou-Zeng \cite{MR3397339} obtained the global well-posedness of strong solutions and the global regularity uniformly up to the vacuum boundary.

Although there have been much important progress as aforementioned, it is still not clear whether the above solutions are smooth or not even locally in time when the viscosity coefficient vanishes at vacuum. In the present paper, we study the local well-posedness of classical solutions to the vacuum free boundary problem of the viscous Saint-Venant system
for shallow waters derived rigorously from the incompressible
Navier-Stokes system with a moving free surface by Gerbeau-Perthame \cite{MR1821555}, which corresponds to \eqref{eq:intro-1} with \(\alpha=1\) and \(\gamma=2\), i.e.,
\begin{equation}\label{eq:intro-2}
\begin{cases}
\rho_t+(\rho
u)_x=0 &\quad \text{in}\ I(t),\\
(\rho u)_t+(\rho u^2+\rho^2)_x=(\rho u_x)_x &\quad \text{in}\ I(t),\\
\rho>0 &\quad \text{in}\ I(t),\\
\rho=0 &\quad \text{on}\ \Gamma(t),\\
\mathcal{V}(\Gamma(t))=u,\\
(\rho,u)=(\rho_0,u_0) &\quad \text{on}\ I(0),\\
I(0)=I=\{x: 0<x<1\}.
\end{cases}
\end{equation}

To solve the system \eqref{eq:intro-2}, we need to solve the four pairs \((\rho, u, I(t), \Gamma(t))\) (in fact it suffices to solve the triple \((\rho, u, \Gamma(t))\)).
Here \(\rho\) denotes the \emph{height} of the fluid (we use this terminology
from its original meaning), and  \(u\) denotes the Eulerian velocity, respectively.
The open, bounded interval \(I(t)\) denotes the changing domain occupied by the
fluid, \(\Gamma(t)=\colon \partial I(t)\) denotes the moving vacuum boundary, and \(\mathcal{V}(\Gamma(t))\) denotes the velocity of \(\Gamma(t)\), respectively.
Equation \(\eqref{eq:intro-2}_1\) stands for the conservation of mass, and Equation \(\eqref{eq:intro-2}_2\) describes the conservation of
momentum, the condition \(\eqref{eq:intro-2}_3\) means that there is no vacuum inside of fluid,
the conditions \(\eqref{eq:intro-2}_4\) tell the dynamical boundary conditions to be investigated, \(\eqref{eq:intro-2}_5\) states that the vacuum boundary is moving with the fluid velocity, and \(\eqref{eq:intro-2}_6\) are the initial conditions for the \emph{height}, velocity, and domain.

The initial \emph{height} profile we are interested in this paper connects to vacuum as follows:
\begin{align}\label{eq:intro-3}
\rho_0\in H^5(\bar{I}(0))\ \text{and}\  C_1d(x)\leq \rho_0(x)\leq C_2d(x) \quad \text{for\ all}\ x\in \bar{I}(0),
\end{align}
for some positive constants \(C_1\) and \(C_2\), where \(d(x)=\colon d(x,\Gamma(0))\) is the distant function from \(x\) to the initial boundary.

We also explain a little bit on the condition \eqref{eq:intro-3}. The condition \eqref{eq:intro-3} is equivalent to the following so-called "physical vacuum singularity". Let \(c(x,t)=\sqrt{\rho(x,t)}\) be the sound speed, and hence \(c_0=c(x,0)\) is the initial sound speed.
The physical vacuum singularity (see, for example, \cite{MR2779087, MR1485360}) is determined by the following condition
\begin{equation}\label{eq:intro-4}
\begin{aligned}
 0<\bigg|\frac{\diff c_0^2}{\diff x}\bigg|<\infty \quad \mbox{on}\ \Gamma(0).
\end{aligned}
\end{equation}
It is straightforward to check that \eqref{eq:intro-3} is equivalent to \eqref{eq:intro-4} by assuming \(\rho_0(x)\) vanishes on the boundary \(\Gamma(0)\).

The study on the physical vacuum free boundary problem for the compressible Euler equations was first given by Jang-Masmoudi \cite{MR2547977} and Coutand-Lindblad-Shkoller \cite{MR2608125} with different methods handling the degeneracy near the free boundary. For other important progress on the vacuum free boundary problems in compressible fluids, one may also refer to \cite{MR2177323, MR2779087,  MR2980528, MR3280249, MR3218831, MR3551252} and the references therein.

The physical vacuum free boundary problem of shallow waters was studied by both Duan \cite{MR2914266} and Ou-Zeng \cite{MR3397339}, with the external force "\(-\rho f\)" (imposed on the right hand side of the momentum equation \(\eqref{eq:intro-2}_2\)), for global theory. In \cite{MR2914266}, the author considered some kind initial density degenerated as \(d^{1/2}(x)\) near the vacuum boundary and showed the global well-posedness of weak solutions by establishing certain global space-time square estimates using Lagrangian mass coordinates. In \cite{MR3397339}, the authors considered some sort of initial density like \(d(x)\) near the vacuum boundary and showed the global well-posedness of strong solutions based on certain weighted energy estimates with both space and time weights using Hardy's inequality together with the particle path method.

We aim to present a detailed proof on the local well-posedness of classical solutions (see Definition \ref{Classical Solution} (b)) to the vacuum free boundary problem \eqref{eq:intro-2}-\eqref{eq:intro-3} in the present paper. Comparing with \cite{MR2914266, MR3397339}, our classical solution satisfies an additional Nuewmann boundary condition \(u_x=0\ \text{on}\ \Gamma(t)\), which is captured by the high regularity of the solution on the vacuum boundary (see Remark \ref{re:main-1}-\ref{re:main-3}). 

To handle the degeneracy near the vacuum boundary and to capture 
the feature \(u_x=0\ \text{on}\ \Gamma(t)\) of our classical solution, we first construct a higher-order energy functional associated to
the degeneracy near the vacuum boundary, and then develop some delicate weighted estimates to close the higher-order energy functional, in which the weighted Sobolev inequalities and some weighted interpolation inequality will play an important role.  Our higher-order energy functional consists of the following four type terms:
\[\int_I \rho_0(\partial_t^{k_1}v)^2\,\diff x,\
\int_I \rho_0(\partial_t^{k_2}v_x)^2\,\diff x,\
\int_I \rho_0^{k_3}(\partial_t\partial_x^{k_3}v)^2\,\diff x,\
\int_I \rho_0^{k_4}(\partial_x^{k_4}v)^2\,\diff x,\]
for some non-negative integers \(k_1, k_2, k_3, k_4\) to be chosen. The first two type terms come from the time-differentiated energy estimates, which are essentially the estimates of the derivatives
in the tangential direction of the moving boundary. 
While the last two type terms are from the elliptic estimates, which depend highly on the degenerate parabolic structure of the momentum equation in \eqref{eq:main-2} and make it possible for us to gain more regularities through the estimates of the derivatives in the normal direction of the moving boundary.

Constructing approximate solutions usually is not a trivial process in showing well-posedness of the physical vacuum free boundary problem of compressible fluids since the system degenerates on the boundary, see \cite{MR2547977, MR2779087, MR2980528, MR3280249}.
 In \cite{MR2779087}, in order to get the regular solution to the compressible Euler equations, Coutand-Shkoller considered a degenerate parabolic regularization well matched with the compressible Euler equations, more precisely where the viscosity has a structure \(\kappa(\rho_0^2v_x)_x\). To show the existence of weak solutions to this degenerate parabolic equation by the Galerkin's scheme, the authors introduced a new variable \(X=\rho_0v_x\) which satisfies a Dirichlet boundary condition
\(X=0\ \text{on}\ \partial I\times [0,T]\) since \(\rho_0\) vanishes on the boundary and \(v_x\) is bounded and then studied the equation for \(X\) instead of \(v\).
(Note that \(v\) itself does not satisfy any boundary condition.) On the other hand, to tackle the strong degeneracy of the viscosity, the authors had to divide a weight \(\rho_0\) on both sides of the degenerate parabolic equation to lower the degeneracy (but there is no singularity in the new equation), where a new higher-order Hardy-type inequality necessitates.

It seems difficult to apply the idea of \cite{MR2779087} straightforwardly to construct approximate solutions of the viscous Saint-Venant system for shallow waters \eqref{eq:intro-2} (see Remark \ref{re:main-5}). 
In this paper, we will construct a classical solution to the vacuum free boundary problem \eqref{eq:intro-2}-\eqref{eq:intro-3} satisfying the Nuewmann boundary condition \eqref{Nuewmann boundary condition-2} (see Remark \ref{re:main-1}-\ref{re:main-3}), so this boundary condition will play an important role in constructing approximate solutions in the Hilbert space \(\mathcal{H}(I)=\{h\in H^3(I): h_x=0 \ \text{on}\ \Gamma \}\). We will first use the Galerkin's scheme to
construct a unique weak solution to the linearized problem, and then improve the regularity of this weak solution based on some key higher order a priori estimates, and finally show  that the approximate solutions converge to a unique classical solution to the degenerate parabolic problem by a contraction mapping method. 

It should be pointed out that, on the one hand, in deducing a priori estimates on higher order derivatives here, one can not manipulate as \cite{MR2779087} to divide a \(\rho_0\) on both sides of the degenerate parabolic equation to lower the degeneracy since it will introduce some singularity in the new equation which  prevents the analysis to work. Hence we will keep the original structure of the degenerate parabolic equation, and use mainly the weighted Sobolev inequalities to handle the degeneracy which depends heavily on the degenerate parabolic structure of the momentum equation in \eqref{eq:main-2}. On the other hand, due to the degeneracy, the energy estimates on the
 approximate solutions are insufficient for us to pass limit in \(n\) on the iteration problem for time pointwisely. 
Therefore we need use some weighted interpolation inequality that can help us to obtain a pointwise convergence for time
on the approximate solutions to the iteration problem (see Section \ref{classical solution}).

In \cite{MR2864798}, Guo-Li-Xin studied the multi-dimensional
viscous Saint-Venat system for the shallow waters and showed the global existence of a spherically symmetric
weak solution to its free boundary value problem, in which detailed regularity and Lagrangian structure
of this solution was presented. It is interesting to extend our classical solutions' result to the multi-dimensional (spherically symmetric) viscous Saint-Venat system for the shallow waters, which is left for future.

The paper is organized as follows. In Section \ref{main results}, we will first formulate the vacuum free boundary problem into a fixed boundary problem and then state our main results.  Section \ref{Some Preliminaries} lists some preliminaries. 
In Section \ref{Energy Estimates} and \ref{Elliptic estimates}, respectively, 
we will focus on the a priori estimates that constitute the energy estimates and elliptic estimates. Section \ref{Existence Part} and \ref{Uniqueness Part} are devoted to showing the existence and uniqueness of a classical solution to our degenerate parabolic problem, respectively.

\section{Reformulation and main results}\label{main results}

\subsection{Fixing the domain}

The initial domain (the reference domain) in one-dimension is given by \(I(0)=(0,1)\). Afterwards, we will use the short notation \(I\) to replace \(I(0)\) for convenience, and also denote by \(\Gamma=\partial I\) the boundary of the reference domain.

Denote by \(\eta\) the position of the fluid particle \(x\) at time \(t\)
\begin{equation}\label{fluid particle}
\begin{cases}
\partial_t\eta(x,t)=u(\eta(x,t),t),\\
\eta(x,0)=x,
\end{cases}
\end{equation}
and also by \(f(x,t)\) and \(v(x,t)\) the Lagrangian \emph{height} and velocity
\begin{equation}\label{Lagrangian variable}
\begin{cases}
f(x,t)=\rho(\eta(x,t),t),\\
v(x,t)=u(\eta(x,t),t).
\end{cases}
\end{equation}
Then  \eqref{eq:intro-2} is transformed to the following problem on the fixed reference interval \(I\):
\begin{equation}\label{eq:main-1}
\begin{cases}
f_t+{\frac{fv_x}{\eta_x}}=0 &\quad \text{in}\ I\times (0,T],\\
\eta_xfv_t+(f^2)_x=(f{\frac{v_x}{\eta_x}})_x &\quad \text{in}\ I\times (0,T],\\
f>0 &\quad \text{in}\ I\times (0,T],\\
f=0 &\quad \text{on}\ \Gamma\times (0,T],\\
(f,v,\eta)=(\rho_0,u_0,e) &\quad \text{on}\ I\times \{t=0\},
\end{cases}
\end{equation}
where \(e(x)=x\) denotes the identity map on \(I\).

Solving \(f\) from Equation \(\eqref{eq:main-1}_1\) yields
\begin{align}\label{eq:Lag-initial}
f(x,t)=\rho_0(x)\eta_x^{-1}(x,t),
\end{align}
one inserts \eqref{eq:Lag-initial} back to Equation \(\eqref{eq:main-1}_2\) to transfer the problem \eqref{eq:main-1} into
\begin{equation}\label{eq:main-2}
\begin{cases}
\rho_0v_t+\big({\frac{\rho_0^2}{\eta_x^2}}\big)_x
=\big(\frac{\rho_0v_x}{\eta_x^2}\big)_x &\quad \mbox{in}\ I\times (0,T],\\
(v,\eta)=(u_0,e) &\quad \mbox{on}\ I\times \{t=0\}.
\end{cases}
\end{equation}

 The problem \eqref{eq:main-2} is a degenerate parabolic problem.

\begin{definition}[Classical Solution]\label{Classical Solution} {\rm{(a)}} We say a function \(v\)  
	is a classical solution to the problem \eqref{eq:main-2}  provided  \(v\) satisfies  \(\eqref{eq:main-2}_1\) in \(\bar{I}\times (0,T]\) pointwisely and is continuous to the initial data \(u_0\).	
	
{\rm{(b)}} We say the pair  \((\rho(x,t), u(x,t), \Gamma(t))\) for \(t\in[0,T]\) and \(x\in I(t)\)  
	is a classical solution to the problem \eqref{eq:intro-2} provided  \((\rho(x,t), u(x,t), \Gamma(t))\) satisfies  \(\eqref{eq:intro-2}_1-\eqref{eq:intro-2}_5\) pointwisely  and is continuous to the initial data \((\rho_0, u_0, \Gamma)\), additionally, \(\eqref{eq:intro-2}_1\) and \(\eqref{eq:intro-2}_2\) hold on the spatial boundary of \(I(t)\) pointwisely. 

\end{definition}

\subsection{The higher-order energy functional}

Our main purpose is to study the local well-posedness of the degenerate parabolic problem \eqref{eq:main-2} in certain weighted Sobolev space with high regularity. For this, we will consider the following higher-order energy functional:
\begin{equation}\label{higher-order energy function}
\begin{aligned}
E(t,v)&=\sum_{k=0}^3\|\sqrt{\rho_0}\partial_t^kv(\cdot,t)\|_{L^2(I)}^2
+\sum_{k=0}^2\|\sqrt{\rho_0}\partial_t^kv_x(\cdot,t)\|_{L^2(I)}^2\\
&\quad+\sum_{k=2}^4\big\|\sqrt{\rho_0^k}\partial_t\partial_x^kv(\cdot,t)\big\|_{L^2(I)}^2
+\sum_{k=2}^6\big\|\sqrt{\rho_0^k}\partial_x^kv(\cdot,t)\big\|_{L^2(I)}^2.
\end{aligned}
\end{equation}

We define the polynomial function \(M_0\) by
\begin{align*}
M_0=P(E(0,v_0)),
\end{align*}
where \(P\) denotes a generic polynomial function of its arguments.

\subsection{Main result on the problem \eqref{eq:main-2}}

The main result in the paper can be stated as follows:
\begin{theorem}\label{th:main-1} Assume the initial data \((\rho_0,v_0)\) satisfy \eqref{eq:intro-3} and \(M_0<\infty\), then
there exist a suitably small \(T>0\) and a unique classical solution 
\begin{align}\label{regularity}
v\in C([0,T]; H^3(I))\cap C^1([0,T]; H^1(I))
\end{align}
 to the problem \eqref{eq:main-2} on \([0,T]\) such that
\begin{align}\label{eq:inequality-1}
\sup_{0\leq t\leq T} E(t,v)\leq 2M_0.
\end{align}
\end{theorem}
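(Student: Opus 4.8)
The plan is to combine a closed a priori weighted energy estimate for the degenerate parabolic equation \(\eqref{eq:main-2}_1\) with a Galerkin-plus-fixed-point construction, organized around a continuity (bootstrap) argument. I would first isolate the a priori estimate: assuming a sufficiently smooth solution \(v\) exists on \([0,T]\) and satisfies \(\sup_{[0,T]} E(t,v)\le 2M_0\), I would show that in fact \(\sup_{[0,T]} E(t,v)\le M_0 + T\,P(\sup_{[0,T]} E(t,v))\) for a generic polynomial \(P\), so that choosing \(T\) small yields the improved bound \(\eqref{eq:inequality-1}\) and forces \(T^{\ast}:=\sup\{T:\sup_{[0,T]}E\le 2M_0\}\) to be positive. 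Throughout I record that \(\eta_x(x,t)=1+\int_0^t v_x(x,s)\,\diff s\), so that for \(T\) small the factor \(\eta_x\) stays uniformly close to \(1\) (keeping the equation uniformly parabolic away from the degenerate weight \(\rho_0\)) and all the nonlinear prefactors \(\eta_x^{-2},\ \eta_{xx}/\eta_x^3,\dots\) are controlled in terms of \(E\) and \(T\). The estimate then splits into the two families highlighted in the introduction.

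For the tangential (time-differentiated) estimates I would apply \(\partial_t^k\) to \(\eqref{eq:main-2}_1\) and test against \(\partial_t^k v\) (for the \(L^2(\rho_0\,\diff x)\)-coercive term) and against \(\partial_t^{k+1}v\) (for the dissipation); integrating by parts in \(x\), the vacuum boundary terms vanish because \(\rho_0=0\) on \(\Gamma\), and the principal part produces \(\tfrac{1}{2}\tfrac{\diff}{\diff t}\!\int_I \rho_0(\partial_t^k v_x)^2/\eta_x^2\,\diff x\) together with a coercive dissipation, controlling the first two sums in \(\eqref{higher-order energy function}\) for \(k=0,1,2,3\). The commutators generated by moving \(\partial_t^k\) past the coefficients \(\rho_0^2/\eta_x^2\) and \(\rho_0/\eta_x^2\) involve fewer time derivatives and are absorbed using Cauchy--Schwarz together with the weighted Sobolev inequalities of Section~\ref{Some Preliminaries}.

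For the normal (elliptic) estimates I would exploit the degenerate-parabolic structure directly, rewriting \(\eqref{eq:main-2}_1\) as
\begin{equation*}
\frac{\rho_0 v_{xx}}{\eta_x^2}=\rho_0 v_t+\Big(\frac{\rho_0^2}{\eta_x^2}\Big)_x-\frac{\rho_{0,x}v_x}{\eta_x^2}+\frac{2\rho_0 v_x\eta_{xx}}{\eta_x^3},
\end{equation*}
which expresses \(\rho_0 v_{xx}\) through quantities already controlled by the tangential estimates and yields the \(k=2\) term \(\int_I \rho_0^2 v_{xx}^2\,\diff x\) (using \(\rho_0\lesssim 1\)). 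Differentiating this identity in \(x\) (and in \(t\)) and iterating climbs up to \(\partial_x^6 v\) and \(\partial_t\partial_x^4 v\); crucially, each additional spatial derivative is paid for by one extra power of the weight \(\rho_0\), which is exactly the structure of the last two sums in \(\eqref{higher-order energy function}\), and the weighted Sobolev and Hardy-type inequalities convert these degenerate weighted \(L^2\) bounds into the pointwise quantities (such as \(\|v_x\|_{L^\infty}\)) needed to close the nonlinear terms. I expect \emph{this coupling to be the main obstacle}: the tangential estimates feed the elliptic ones and conversely, so the precise weights \(\rho_0^k\) must be chosen so that the scheme neither loses derivatives nor accumulates inverse powers of \(\rho_0\) near \(\Gamma\); in particular one must resist the temptation, stressed in the introduction, to divide by \(\rho_0\) to lower the degeneracy.

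With the closed a priori estimate in hand, existence follows the three-step scheme announced above. Freezing the geometry \(\bar\eta\) linearizes \(\eqref{eq:main-2}_1\) into a linear degenerate parabolic equation, which I would solve by Galerkin approximation in \(\mathcal H(I)=\{h\in H^3(I): h_x=0 \text{ on } \Gamma\}\), whose defining Neumann constraint encodes the boundary behaviour \(u_x=0\) of the sought classical solution. The uniform a priori bounds then upgrade the regularity of the Galerkin weak solution, and a contraction-mapping argument on the map \(\bar v\mapsto v\)---with the weighted interpolation inequality supplying the pointwise-in-time convergence that the energy bounds alone are too weak to provide---produces the classical solution with the stated regularity \(\eqref{regularity}\). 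Finally, uniqueness for the full nonlinear problem follows from a weighted \(L^2\) energy estimate on the difference of two solutions combined with Gr\"onwall's inequality.
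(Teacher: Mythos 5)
Your plan coincides with the paper's own proof in its two main components. The a priori part — the bootstrap assumption $\sup_{[0,T]}E\le 2M_0$, the time-differentiated estimates tested against $\partial_t^kv$ and $\partial_t^{k+1}v$ with vanishing boundary terms, and the elliptic estimates that trade each extra spatial derivative for one extra power of $\rho_0$ — is exactly how the paper arrives at \eqref{I-Priori-ellip-32} and \eqref{I-Priori-ellip-33}. Likewise the existence scheme (linearization with frozen $\bar\eta$, Galerkin in $\mathcal{H}(I)$ whose Neumann constraint encodes \eqref{Nuewmann boundary condition}, regularity upgrade of the weak solution, then an iteration whose contraction in the weighted $L^2$ norm is upgraded to uniform-in-time convergence by the weighted interpolation inequality of Lemma \ref{le:Preliminary-5}) is the paper's Section \ref{Existence Part}. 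One technical pointer on the elliptic part: your identity for $\rho_0v_{xx}$ is the right starting point, but the dangerous cross terms $(\rho_0)_x\partial_x^kv$ are not simply absorbed; the paper removes them by a sign-definite integration-by-parts trick (expand $\|\rho_0\partial_x^{k+1}v+c(\rho_0)_x\partial_x^kv\|_{L^2}^2$ and integrate the cross term by parts, as in \eqref{I-Priori-ellip-4} and \eqref{I-Priori-ellip-17}). Also, the paper deliberately works only with the weighted Sobolev inequalities \eqref{ineq:weighted Sobolev-0}--\eqref{ineq:weighted Sobolev} and avoids the higher-order Hardy-type inequalities of Coutand--Shkoller, consistent with your own remark about not dividing by $\rho_0$.

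The one place you genuinely diverge is uniqueness. You propose a weighted $L^2$ estimate for $\delta=v-w$ plus Gr\"onwall, whereas the paper closes a full lower-order energy functional $\mathcal{E}(t,\delta_{vw})$ (defined in \eqref{energy function-2}), re-running both the energy and the elliptic estimates at that level to reach \eqref{uniques}. Your lighter route does work, with one caveat worth making explicit: the error terms in the difference equation involve $\int_0^t\delta_x\,\diff s$, which cannot be controlled by $\int_I\rho_0\delta^2\,\diff x$ alone, so the Gr\"onwall functional must include the time-integrated dissipation $\int_0^t\int_I\rho_0\delta_x^2\,\diff x\,\diff s$. With that, the computation is literally the paper's own contraction estimate \eqref{existence-23}--\eqref{existence-30} applied to two exact solutions, and it closes with constants depending only on $\sup_t E(t,v)$ and $\sup_t E(t,w)$ (through $\|v_x\|_{L^\infty}$, $\|w_x\|_{L^\infty}$, which are finite for solutions in the class \eqref{eq:inequality-1}). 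What the paper's heavier route buys is more than bare uniqueness: closing $\mathcal{E}(t,\delta_{vw})$ is a continuous-dependence statement in a much stronger topology, and Section \ref{Uniqueness Part} explains why the full $E(t,\delta_{vw})$ cannot be closed, which is what forces the introduction of $\mathcal{E}$. For the uniqueness assertion of Theorem \ref{th:main-1} itself, your simpler argument suffices.
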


Moreover, \(v\) satisfies the Nuewmann boundary condition
\begin{align}\label{Nuewmann boundary condition}
v_x=0 \quad \mbox{on}\ \Gamma\times (0,T].
\end{align}

\subsection{Main result on the vacuum free
	boundary problem \eqref{eq:intro-2}-\eqref{eq:intro-3}}

Due to \eqref{eta-bound}, the flow map \(\eta(\cdot,t)\colon I\rightarrow I(t)\) is inverse for any \(t\in[0,T]\) and we denote its inverse by \(\tilde{\eta}(\cdot,t)\colon I(t)\rightarrow I\), where \(T\) is determined in Theorem \ref{th:main-1}. Let \((\eta,v)\) be the unique classical solution in Theorem \ref{th:main-1}. For \(t\in[0,T]\) and \(y\in I(t)\), set
\begin{equation*}
\begin{aligned}
&\rho(y,t)=\rho_0(\tilde{\eta}(y,t))\eta_x^{-1}(\tilde{\eta}(y,t),t),\\
&u(y,t)=v(\tilde{\eta}(y,t),t).
\end{aligned}
\end{equation*}
Then the triple \((\rho(y,t), u(y,t), \Gamma(t))\)) (\(t\in[0,T]\)) defines a unique classical solution to the vacuum free
boundary problem \eqref{eq:intro-2}-\eqref{eq:intro-3}. More precisely, Theorem \ref{th:main-1} can be transferred into the following:

\begin{theorem}\label{th:main-2} Assume the initial data \((\rho_0,u_0)\) satisfy \eqref{eq:intro-3} and \(M_0<\infty\), then
there exist a \(T>0\) and a unique classical solution \((\rho(y,t), u(y,t), \Gamma(t))\) for \(t\in[0,T]\) and \(y\in I(t)\) to the vacuum free
boundary problem \eqref{eq:intro-2}-\eqref{eq:intro-3}. Moreover, \(\Gamma(t)\in C^2([0,T])\), and for \(t\in[0,T]\) and \(y\in I(t)\), we have
\begin{equation}\label{regularity-2}
\begin{aligned}
&\rho(y,t)\in C([0,T];H^3(I(t)))\cap C^1([0,T];H^2(I(t)));\\
&u(y,t)\in C([0,T];H^3(I(t)))\cap C^1([0,T];H^1(I(t))).
\end{aligned}
\end{equation}

Moreover, \(u\) satisfies the Nuewmann boundary condition
\begin{align}\label{Nuewmann boundary condition-2}
u_x=0 \quad \rm{on}\ \Gamma(t).
\end{align}

\end{theorem}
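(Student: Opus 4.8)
\textbf{Proof proposal for Theorem \ref{th:main-2}.}

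The plan is to transfer the fixed-domain result of Theorem \ref{th:main-1} back to the moving physical domain through the flow map $\eta$, and to verify each asserted regularity and boundary property by a change of variables. The starting point is the bound \eqref{eq:inequality-1} together with the estimate \eqref{eta-bound} on $\eta_x$, which guarantees that $\eta(\cdot,t)\colon I\to I(t)$ is a diffeomorphism with a positive, bounded Jacobian uniformly in $t\in[0,T]$; hence the inverse $\tilde\eta(\cdot,t)$ is well-defined and enjoys the same regularity in $y$ as $\eta$ does in $x$. First I would record that $\Gamma(t)=\eta(\Gamma,t)$, so the two moving endpoints are given by $\eta(0,t)$ and $\eta(1,t)$; by \eqref{fluid particle} these satisfy $\partial_t\eta(j,t)=v(j,t)$ for $j\in\{0,1\}$, and since $v\in C^1([0,T];H^1(I))$ and $H^1(I)\hookrightarrow C(\bar I)$ in one dimension, the endpoint trajectories are $C^1$ in $t$; differentiating once more and using the momentum equation $\eqref{eq:main-2}_1$ evaluated near the boundary (where $\rho_0$ vanishes) upgrades this to $\Gamma(t)\in C^2([0,T])$.

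Next I would establish the regularity \eqref{regularity-2}. Starting from \eqref{eq:Lag-initial}, namely $f=\rho_0\eta_x^{-1}$, and composing with $\tilde\eta$ gives $\rho(y,t)=\rho_0(\tilde\eta(y,t))\,\eta_x^{-1}(\tilde\eta(y,t),t)$, while $u(y,t)=v(\tilde\eta(y,t),t)$. The temporal continuity $v\in C([0,T];H^3(I))\cap C^1([0,T];H^1(I))$ passes to $u$ on $I(t)$ because composition with the (time-dependent but regular) diffeomorphism $\tilde\eta$ preserves the relevant Sobolev norms up to factors controlled by $\|\eta_x\|$ and $\|\eta_x^{-1}\|$; the chain rule converts spatial derivatives $\partial_y$ into $\eta_x^{-1}\partial_x$, and the factors $\eta_x^{-1}$ are themselves bounded in the needed Sobolev norms by the energy functional $E(t,v)$. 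For $\rho$ I would combine the $H^5$ regularity of $\rho_0$ in \eqref{eq:intro-3} with the regularity of $\eta_x$; the slight gain $\rho\in C^1([0,T];H^2(I(t)))$ over $u$ reflects that $\rho$ is reconstructed algebraically from $\rho_0$ and $\eta_x$ rather than solving the parabolic equation directly, so one fewer time derivative is spent.

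The verification that $(\rho,u,\Gamma(t))$ solves $\eqref{eq:intro-2}_1$–$\eqref{eq:intro-2}_5$ pointwise is then a direct substitution: the reformulation $\eqref{eq:main-1}$ is exactly the pull-back of $\eqref{eq:intro-2}_1$–$\eqref{eq:intro-2}_2$ under $\eta$, so composing the fixed-domain identities with $\tilde\eta$ recovers the Eulerian equations on $I(t)$; condition $\eqref{eq:intro-2}_5$, $\mathcal V(\Gamma(t))=u$, is precisely the endpoint statement $\partial_t\eta(j,t)=v(j,t)$ established above, and $\eqref{eq:intro-2}_3$–$\eqref{eq:intro-2}_4$ follow from positivity of $\rho_0$ inside $I$ and its vanishing on $\Gamma$ carried along the flow. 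The Neumann condition \eqref{Nuewmann boundary condition-2} is obtained from \eqref{Nuewmann boundary condition}: since $u_y=\eta_x^{-1}v_x\circ\tilde\eta$ and $v_x=0$ on $\Gamma\times(0,T]$ while $\eta_x^{-1}$ stays bounded, the factor $v_x$ forces $u_y=0$ on $\Gamma(t)$. Finally, uniqueness transfers because the flow map is a bijection between solutions of the two formulations: any classical solution of \eqref{eq:intro-2}-\eqref{eq:intro-3} pulls back via its own particle trajectories to a classical solution of \eqref{eq:main-2}, which is unique by Theorem \ref{th:main-1}.

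I expect the main obstacle to be the boundary analysis rather than the interior change of variables. Specifically, showing $\Gamma(t)\in C^2$ and that $\eqref{eq:intro-2}_1$–$\eqref{eq:intro-2}_2$ hold pointwise \emph{on} the spatial boundary (as required in Definition \ref{Classical Solution}(b)) is delicate because $\rho$ degenerates like the distance to $\Gamma(t)$, so the momentum equation contains products of a vanishing density with derivatives of $u$ that are only controlled in weighted norms; one must check that the degenerate terms in $\eqref{eq:intro-2}_2$ (the pressure gradient $(\rho^2)_y$ and the viscous term $(\rho u_y)_y$) extend continuously to $\Gamma(t)$ and balance correctly there. This requires extracting boundary traces from the weighted energy $E(t,v)$ via the weighted Sobolev inequalities invoked in the theorem's hypotheses, and verifying that the $\rho_0$-weights appearing in $E$ translate, under composition with $\tilde\eta$, into the correct powers of the distance to $\Gamma(t)$ so that $u_y$ and $\rho_y$ have well-defined limits at the moving endpoints.
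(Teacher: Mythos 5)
Your proposal follows essentially the same route as the paper, whose entire proof of Theorem \ref{th:main-2} is precisely the transfer construction you describe: invert the flow map (using \eqref{eta-bound}), set $\rho(y,t)=\rho_0(\tilde\eta(y,t))\,\eta_x^{-1}(\tilde\eta(y,t),t)$ and $u(y,t)=v(\tilde\eta(y,t),t)$, and carry the regularity \eqref{regularity}, the Neumann condition \eqref{Nuewmann boundary condition}, and the uniqueness of Theorem \ref{th:main-1} over to the moving domain. One small correction: $\Gamma(t)\in C^2([0,T])$ needs no appeal to the momentum equation near the boundary (which degenerates there, since $\rho_0$ vanishes, and yields only the constraint $(\rho_0)_xv_x=0$ rather than a formula for $v_t$); it follows directly from $\partial_t\eta(j,t)=v(j,t)$ for $j\in\{0,1\}$ together with $v\in C^1([0,T];H^1(I))$, because point evaluation is a bounded functional on $H^1(I)$ in one dimension, so $t\mapsto v(j,t)$ is already $C^1$.
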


\subsection{Some remarks}
The following remarks are helpful for understanding our main results.

\begin{remark}\label{re:main-1}
By the trace theorem \(H^3(I)\hookrightarrow H^{5/2}(\Gamma)\) (see \cite{MR2597943} for instance) and \(v(\cdot,t)\in H^3(I)\) for each \(t\in(0,T]\), one may define the Nuewmann boundary condition \eqref{Nuewmann boundary condition} pointwisely due to \eqref{regularity}. Similarly, one can also define \eqref{Nuewmann boundary condition-2} by \(u(y,t)\in C([0,T];H^3(I(t)))\) for  \(t\in[0,T]\) and \(y\in I(t)\) pointwisely due to \eqref{regularity-2}.

\end{remark}

\begin{remark}\label{re:main-2} It follows  from Remark \ref{re:main-1} that \eqref{Nuewmann boundary condition} is well-defined if the solution to the problem \eqref{eq:main-2} possesses the regularity \eqref{regularity}. In fact, \eqref{Nuewmann boundary condition} holds naturally for the classical solution in the sense of Definition \ref{Classical Solution} (a), however, with a higher regularity \eqref{eq:inequality-1}. In the following, we show how to derive \eqref{Nuewmann boundary condition} from Definition \ref{Classical Solution} (a) together with \eqref{eq:inequality-1}.  

First note from Equation \(\eqref{eq:main-2}_1\) that 
\begin{align}\label{r1}
	\rho_0v_t+{\frac{2\rho_0(\rho_0)_x}{\eta_x^2}}-{\frac{2\rho_0^2\eta_{xx}}{\eta_x^3}}
	=\frac{(\rho_0)_xv_x}{\eta_x^2}+\rho_0\bigg(\frac{v_{xx}}{\eta_x^2}-\frac{2v_x\eta_{xx}}{\eta_x^3}\bigg),
\end{align}
for \((x,t)\in \ I\times (0,T]\). 
It follows from \eqref{eq:inequality-1}, Lemma \ref{le:Preliminary-1} and Lemma \ref{le:Preliminary-2} that
\begin{align*}
	\rho_0v_t(\cdot, t),\ v_x(\cdot, t),\ \eta_x(\cdot, t),\ \rho_0v_{xx}(\cdot, t),\ \rho_0\eta_{xx}(\cdot, t)\in H^2(I)\quad  \text{for}\ t\in (0,T],
\end{align*}
which combines the trace theorem \(H^2(I)\hookrightarrow H^{3/2}(\Gamma)\) yields 
\begin{align*}
	\rho_0v_t(\cdot, t),\ v_x(\cdot, t),\ \eta_x(\cdot, t),\ \rho_0v_{xx}(\cdot, t),\ \rho_0\eta_{xx}(\cdot, t)\in H^{3/2}(\Gamma)\quad  \text{for}\ t\in (0,T].
\end{align*}
This implies that each term in \eqref{r1} is well-defined pointwisely on \( \Gamma\times (0,T]\).
Using \eqref{eq:intro-3}, \eqref{eta-bound}, and  letting \(x\) go to the vacuum boundary \(\Gamma(t)\), then one obtains  
\begin{align}\label{r2}
	(\rho_0)_xv_x=0 \quad \rm{on}\ \Gamma\times (0,T]. 
\end{align}
By \eqref{eq:intro-3} again, one sees \((\rho_0)_x\neq 0\) on \(\Gamma\), hence \eqref{Nuewmann boundary condition} follows from \eqref{r2}.

On the other hand side, to construct a classical solution to the problem \eqref{eq:main-2}, we will use a Galerkin's scheme to study its linearized problem, in which the Nuewmann boundary condition \eqref{Nuewmann boundary condition} will play a crucial role. 

\end{remark}

\begin{remark}\label{re:main-3} 
For the problem \eqref{eq:intro-2}-\eqref{eq:intro-3}, since \(\rho\) vanishes on \(\Gamma(t)\),  
the usual stress free condition
\begin{align}\label{r3}
S=\rho^2-\rho u_x=0\quad \rm{on}\ \Gamma(t)
\end{align}
holds automatically.

\end{remark}

\begin{remark}\label{re:main-4}
 In \cite{MR2779087}, Coutand-Shkoller studied the well-posedness of the physical vacuum free boundary problem of the compressible Euler equations, which may be written in Lagrangian coordinates as
\begin{align}\label{r3.5}
	\rho_0v_t+\bigg({\frac{\rho_0^\gamma}{\eta_x^\gamma}}\bigg)_x
	=0.
\end{align}
For \(1<\gamma\leq 2\), the authors constructed the following energy functional (see Section 8 in \cite{MR2779087}):
\begin{equation}\label{CS'energy function}
\begin{aligned}
	E_\gamma(t,v)&=\sum_{s=0}^4\|\partial_t^sv(\cdot, t)\|_{H^{2-s/2}}^2+\sum_{s=0}^2\|\rho_0\partial_t^{2s}v(\cdot, t)\|_{H^{3-s}}^2+\|\sqrt{\rho_0}\partial_t\partial_x^2v(\cdot, t)\|_{L^2}^2\\
&\quad+\|\sqrt{\rho_0}\partial_t^3\partial_xv(\cdot, t)\|_{L^2}^2
+\sum_{a=0}^{a_0}\|\sqrt{\rho_0}^{1+\frac{1}{\gamma-1}-a}\partial_t^{4+a_0-a}\partial_xv(\cdot,t)\|_{L^2}^2,
\end{aligned}
\end{equation}
where \(a_0\) satisfies \(1<1+\frac{1}{\gamma-1}-a_0\leq 2\). Note that the last sum in \(E_\gamma\) appears whenever \(1<\gamma< 2\), and the order of the time-derivative increases to infinity as \(\gamma\to 1^+\). 

But the energy functional \eqref{CS'energy function} fails for \(\gamma=1\) whose equation corresponds to the isothermal Euler equation:
\begin{align}\label{r4}
	\rho_0v_t+\bigg({\frac{\rho_0}{\eta_x}}\bigg)_x
	=0.
\end{align}
Next, we will compare the isothermal Euler model with the shallow water model in the following two aspects. On the one hand, applying 
\(\partial_t\) to Equation \eqref{r4} yields
\begin{align}\label{r5}
	\rho_0\partial_t^2v=\bigg({\frac{\rho_0v_x}{\eta_x^2}}\bigg)_x.
\end{align}
The term \(\big({\frac{\rho_0v_x}{\eta_x^2}}\big)_x\) in Equation \eqref{r5} also appears in Equation \(\eqref{eq:main-2}_1\), which contributes the main difficulties in the elliptic estimates (see Section \ref{Elliptic estimates}).
One the other hand, it follows from \eqref{r4} that
\begin{align}\label{rIB}
	\rho_0v_t+{\frac{(\rho_0)_x}{\eta_x}}-{\frac{\rho_0\eta_{xx}}{\eta_x^2}}=0.
\end{align}
One can claim that there is no classical solution to \eqref{r4} living in some weighted Sobolev space with high regularity such that
\begin{align*}
	\rho_0v_t(\cdot, t),\ \rho_0\eta_{xx}(\cdot, t)\in H^2(I)\quad  \text{for}\ t\in (0,T].
\end{align*} 
Otherwise, one may argue as Remark \ref{re:main-2} for \eqref{rIB} to deduce
\begin{align*}
	(\rho_0)_x=0,
\end{align*} 
which contradicts \eqref{eq:intro-3}.

\end{remark}

\begin{remark}\label{re:main-5}
In \cite{MR2779087}, to construct the approximate solutions of \eqref{r3.5} with \(\gamma=2\), Coutand-Shkoller used the following parabolic \(\kappa\)-problem:
\begin{equation}\label{r6}
\begin{cases}
\rho_0v_t+\big({\frac{\rho_0^2}{\eta_x^2}}\big)_x
=\kappa(\rho_0^2v_x)_x &\quad \mbox{in}\ I\times (0,T],\\
(v,\eta)=(u_0,e) &\quad \mbox{on}\ I\times \{t=0\}
\end{cases}
\end{equation}
for small \( \kappa>0\). To show the existence of solutions to the problem \eqref{r6}, the authors considered its linearized problem  
\begin{equation}\label{r7}
\begin{cases}
\rho_0v_t+\big({\frac{\rho_0^2}{\bar{\eta}_x^2}}\big)_x
=\kappa(\rho_0^2v_x)_x &\quad \mbox{in}\ I\times (0,T],\\
(v,\eta)=(u_0,e) &\quad \mbox{on}\ I\times \{t=0\},
\end{cases}
\end{equation}
where 
\[\bar{\eta}(x,s)=x+\int_0^t\bar{v}(x,s)\,\diff s\]
for \(\bar{v}\) in some Hilbert space \(\mathcal{C}_T(M)\). The solution to the parabolic \(\kappa\)-problem \eqref{r6} will then be obtained as a fixed point of the map \(\bar{v}\mapsto v\) (\(v\) is a unique solution to the problem \eqref{r7}) in \(\mathcal{C}_T(M)\) for small \(T>0\)  via the Tychonoff fixed-point theorem (which requires that the solution space is a reflexive separable Banach space).

To show the existence of solutions to the problem \eqref{eq:main-2}, we also need to consider its linearized problem \eqref{existence-3}. However, the solution space (defined by \eqref{solution space}) for the problem \eqref{existence-3} (which is the same one with the problem \eqref{eq:main-2}) is a non-reflexive Banach space, which prevents us applying the Tychonoff fixed-point theorem straightforwardly to obtain the  existence of solutions to the problem \eqref{eq:main-2}. To get around the difficulty, we will design a contraction mapping for the approximate solutions to the iteration problem \eqref{existence-21} and show its approximate solutions converge uniformly to a classical solution to the problem \eqref{eq:main-2}, in which some weighted interpolation inequality is needed to overcome the difficulty of passing limit in \(n\) on the approximate solutions to the iteration problem \eqref{existence-21}
for time pointwisely, which is caused by the degeneracy in the energy estimates (see Section \ref{classical solution}).

\end{remark}

\section{Some Preliminaries}\label{Some Preliminaries}

\subsection{Weighted Sobolev inequalities}
To handle the degeneracy near the vacuum boundary, we will need the following weighted Sobolev inequalities, whose proof can be found for instance in \cite{MR0802206}. Let \(d(x)=\colon d(x,\Gamma)\) be the distant function to the boundary \(\Gamma\). Then the following weighted Sobolev inequalities hold:
\begin{equation}\label{ineq:weighted Sobolev-0}
\begin{aligned}
\|w\|_{H^{1/2}(I)}^2\lesssim \int_Id(x)(w^2+w_x^2)(x)\,\diff x,
\end{aligned}
\end{equation}
\begin{equation}\label{ineq:weighted Sobolev}
\begin{aligned}
\int_Id^k(x)w^2(x)\,\diff x\lesssim \int_Id^{k+2}(x)(w^2+w_x^2)(x)\,\diff x\quad \mbox{for}\ k=0,1,2,...,
\end{aligned}
\end{equation}
here and thereafter the convention \(\cdot\lesssim\cdot\) denotes \(\cdot\leq C\cdot \), and \(C\) always denotes a nonnegative universal constant which may be different from line to line.

Recall that the initial \emph{height} profile \(\rho_0(x)\) connects to vacuum as \eqref{eq:intro-3}, so the distance function \(d(x)\) can be replaced by \(\rho_0(x)\) in the weighted Sobolev inequalities \eqref{ineq:weighted Sobolev-0} and \eqref{ineq:weighted Sobolev}.

\subsection{Sobolev embedding}
The standard Sobolev embedding inequality
\begin{equation}\label{Sobolev inequaty}
\begin{aligned}
\|w\|_{L^{2/(1-2s)}(I)}\lesssim \|w\|_{H^s(I)}\quad \mbox{for}\ 0<s<1/2,
\end{aligned}
\end{equation}
will also be used.

\subsection{Consequences of \eqref{higher-order energy function}}\label{consequency of higher-order energy function}

As a prerequisite for later use, we will use the weighted Sobolev inequality \eqref{ineq:weighted Sobolev} to deduce some useful consequences of the boundness of the energy functional defined in  \eqref{higher-order energy function}. 
\begin{lemma}\label{le:Preliminary-1} It holds that
	\begin{align}\label{Preliminary-1}
		\|v(\cdot,t)\|_{H^3(I)}\lesssim E^{1/2}(t,v).
	\end{align}
	As a consequence, if \eqref{fluid particle} and \eqref{Lagrangian variable} hold, then 
	\begin{align}
		&\|\eta_{xx}(\cdot,t)\|_{L^2(I)}+\|\partial_x^3\eta(\cdot,t)\|
		_{L^2(I)}\lesssim t\sup_{0\leq s\leq t}E^{1/2}(t,v),\label{Preliminary-2}\\
		&\|v_x(\cdot,t)\|_{L^\infty(I)}
		+\|v_{xx}(\cdot,t)\|_{L^\infty(I)}\lesssim E^{1/2}(t,v),\label{Preliminary-3}\\
		&\|\eta_{xx}(\cdot,t)\|_{L^\infty(I)}\lesssim t\sup_{0\leq s\leq t}E^{1/2}(s,v)\label{Preliminary-5}.
	\end{align}
\end{lemma}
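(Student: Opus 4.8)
The plan is to establish the master inequality \eqref{Preliminary-1} first, and then derive the remaining four estimates as consequences by combining it with the defining relations \eqref{fluid particle}, \eqref{Lagrangian variable}, standard Sobolev embedding in one dimension, and the time-integration that relates $\eta$ to $v$. For \eqref{Preliminary-1}, the point is that $H^3$ regularity away from the boundary is controlled by the ordinary (unweighted) highest-order term, while near the vacuum boundary the weighted term $\|\sqrt{\rho_0^6}\,\partial_x^6 v\|_{L^2}^2$ appears in $E$ but one only needs $H^3$. So I would not try to bound all six derivatives: instead I would show that the three low-order pieces $\|v\|_{L^2}$, $\|v_x\|_{L^2}$, $\|v_{xx}\|_{L^2}$, $\|\partial_x^3 v\|_{L^2}$ are each dominated by $E^{1/2}$. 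The cleanest route is to use the weighted Sobolev inequality \eqref{ineq:weighted Sobolev} (with $\rho_0$ in place of $d$) repeatedly to trade an unweighted $L^2$ norm of $\partial_x^k v$ against a $\rho_0$-weighted $L^2$ norm of $\partial_x^k v$ plus $\partial_x^{k+1} v$, climbing up the derivative ladder until one lands inside the terms that actually appear in \eqref{higher-order energy function}.

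More concretely, for the top term I would write
\begin{equation*}
\int_I (\partial_x^3 v)^2 \,\diff x \lesssim \int_I \rho_0^2\big((\partial_x^3 v)^2 + (\partial_x^4 v)^2\big)\,\diff x \lesssim \int_I \rho_0^2(\partial_x^3 v)^2\,\diff x + \int_I \rho_0^4(\partial_x^4 v)^2\,\diff x + \cdots,
\end{equation*}
applying \eqref{ineq:weighted Sobolev} with $k=0$, then $k=2$, and so on, so that after finitely many steps every resulting integrand is one of the weighted summands $\rho_0^k(\partial_x^k v)^2$ with $2\le k\le 6$ controlled by $E$; the lower derivatives $v, v_x, v_{xx}$ are handled identically or by the already-bounded $\|\sqrt{\rho_0}\,v\|_{L^2}$, $\|\sqrt{\rho_0}\,v_x\|_{L^2}$ terms together with one application of \eqref{ineq:weighted Sobolev}. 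Summing these gives \eqref{Preliminary-1}.

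For the consequences I would proceed as follows. Estimate \eqref{Preliminary-3} is immediate from \eqref{Preliminary-1} and the one-dimensional embedding $H^3(I)\hookrightarrow W^{2,\infty}(I)$, which controls $\|v_x\|_{L^\infty}+\|v_{xx}\|_{L^\infty}$ by $\|v\|_{H^3}\lesssim E^{1/2}$. For the $\eta$-estimates I would use $\eta(x,t)=x+\int_0^t v(x,s)\,\diff s$, so that $\eta_{xx}(\cdot,t)=\int_0^t v_{xx}(\cdot,s)\,\diff s$ and $\partial_x^3\eta(\cdot,t)=\int_0^t \partial_x^3 v(\cdot,s)\,\diff s$; applying Minkowski's integral inequality in the $L^2$ norm and then \eqref{Preliminary-1} gives the bound $t\sup_{0\le s\le t}E^{1/2}$ in \eqref{Preliminary-2}, and the same integral representation with the $L^\infty$ bound from \eqref{Preliminary-3} gives \eqref{Preliminary-5}. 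Throughout I would silently use that $\rho_0$ and $d$ are comparable by \eqref{eq:intro-3}, as the paper already licenses replacing $d$ by $\rho_0$ in the weighted inequalities.

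The only genuinely delicate step is the derivative-climbing argument for \eqref{Preliminary-1}: one must check that iterating \eqref{ineq:weighted Sobolev} terminates inside the range $2\le k\le 6$ of weights actually present in $E$, rather than generating a weight or a derivative order that is not controlled. I expect this to be the main obstacle, since a careless application could produce a term like $\rho_0^6(\partial_x^7 v)^2$ that lies outside the energy functional; the bookkeeping has to be arranged so that the highest derivative reached is exactly $\partial_x^6 v$ paired with weight $\rho_0^6$. The remaining estimates are routine once \eqref{Preliminary-1} and the Sobolev embedding are in hand.
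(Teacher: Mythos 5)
Your proposal is correct and follows essentially the same route as the paper's proof: the paper bounds $\|v\|_{L^2},\|v_x\|_{L^2},\|v_{xx}\|_{L^2},\|\partial_x^3v\|_{L^2}$ by iterating the weighted Sobolev inequality \eqref{ineq:weighted Sobolev} exactly as in your ladder-climbing scheme, with the iteration terminating at $\int_I\rho_0^6\big[(\partial_x^5v)^2+(\partial_x^6v)^2\big]\,\diff x$, and intermediate weights such as $\rho_0^4(\partial_x^3v)^2$ absorbed into the energy terms via the boundedness of $\rho_0$. The consequences \eqref{Preliminary-2}, \eqref{Preliminary-3}, \eqref{Preliminary-5} are derived in the paper exactly as you describe, via Minkowski's inequality applied to $\partial_x^k\eta=\int_0^t\partial_x^kv\,\diff s$ and the embedding $H^1(I)\hookrightarrow L^\infty(I)$.
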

\begin{proof}
	Indeed, it follows from the weighted Sobolev inequality \eqref{ineq:weighted Sobolev} that
	\begin{equation*}
		\begin{aligned}
			\int_I v^2\,\diff x\lesssim \int_I\rho_0^2(v^2+v_x^2)\,\diff x\lesssim E(t,v),
		\end{aligned}
	\end{equation*}
	\begin{equation*}
		\begin{aligned}
			\int_I v_x^2\,\diff x\lesssim \int_I\rho_0^2(v_x^2+v_{xx}^2)\,\diff x\lesssim E(t,v),
		\end{aligned}
	\end{equation*}
	\begin{equation*}
		\begin{aligned}
			&\int_I v_{xx}^2\,\diff x\lesssim \int_I\rho_0^2[(v_{xx}^2+(\partial_x^3v)^2]\,\diff x\\
			&\lesssim E(t,v)+\int_I\rho_0^4[(\partial_x^3v)^2+(\partial_x^4v)^2]\,\diff x
			\lesssim E(t,v),
		\end{aligned}
	\end{equation*}
	and
	\begin{equation*}
		\begin{aligned}
			&\int_I (\partial_x^3v)^2\,\diff x\lesssim \int_I\rho_0^2[(\partial_x^3v)^2+(\partial_x^4v)^2]\,\diff x\\
			&\lesssim \int_I\rho_0^4[(\partial_x^3v)^2+(\partial_x^4v)^2]\,\diff x
			+\int_I\rho_0^4[(\partial_x^4v)^2+(\partial_x^5v)^2]\,\diff x\\
			&\lesssim E(t,v)+\int_I\rho_0^6[(\partial_x^5v)^2+(\partial_x^6v)^2]\,\diff x\lesssim E(t,v).
		\end{aligned}
	\end{equation*}
	Hence \eqref{Preliminary-1} follows.

	For \eqref{Preliminary-2}, it follows from \eqref{Preliminary-1} that
	\begin{equation*}
		\begin{aligned}
			\|\partial_x^k\eta(\cdot,t)\|_{L^2(I)}&\leq \int_0^t\left(\int_I (\partial_x^kv)^2\,\diff x\right)^{1/2}\diff s
			&\lesssim t \sup_{0\leq s\leq t}E^{1/2}(t,v)),\ k=2,3,
		\end{aligned}
	\end{equation*}
	where one has used Minkowski's inequality in the first inequality.
	
	The inequality \eqref{Preliminary-3} is a consequence of \eqref{Preliminary-1} and the Sobolev embedding \(H^1(I)\hookrightarrow L^\infty(I)\). Then the inequality  \eqref{Preliminary-5} may be shown as
	\begin{align*}
		\|\eta_{xx}(\cdot,t)\|_{L^\infty(I)}\leq \int_0^t\|v_{xx}(\cdot,s)\|_{L^\infty(I)}\diff s\leq t\sup_{0\leq s\leq t}E^{1/2}(s,v).
	\end{align*}
\end{proof}

Similarly, one also has
\begin{lemma}\label{le:Preliminary-2} It holds that
	 \begin{align}\label{Preliminary-6}
			\|\rho_0\partial_x^4v(\cdot,t)\|_{L^2(I)}+\|\rho_0^2\partial_x^5v(\cdot,t)\|_{L^2(I)}+\|\rho_0^3\partial_x^6v(\cdot,t)\|_{L^2(I)}\lesssim E^{1/2}(t,v).
	\end{align}
	As a consequence, if \eqref{fluid particle} and \eqref{Lagrangian variable} hold, then
	\begin{align}
			&\|\rho_0\partial_x^4\eta(\cdot,t)\|_{L^2(I)}+\|\rho_0^2\partial_x^5\eta(\cdot,t)\|_{L^2(I)}
			+\|\rho_0^3\partial_x^6\eta(\cdot,t)\|_{L^2(I)}
			\lesssim t\sup_{0\leq s\leq t}E^{1/2}(t,v),\label{Preliminary-7}\\
			&\|\rho_0\partial_x^3v(\cdot,t)\|_{L^\infty(I)}+\|\rho_0^2\partial_x^4v(\cdot,t)\|_{L^\infty(I)}+\|\rho_0^3\partial_x^5v(\cdot,t)\|_{L^\infty(I)}\lesssim E^{1/2}(t,v),\label{Preliminary-8}\\
			&\|\rho_0\partial_x^3\eta(t,\cdot)\|_{L^\infty(I)}
			+\|\rho_0^2\partial_x^4\eta(\cdot,t)\|_{L^\infty(I)}+\|\rho_0^3\partial_x^5\eta(\cdot,t)\|_{L^\infty(I)}\lesssim t\sup_{0\leq s\leq t}E^{1/2}(s,v)\label{Preliminary-9}.
	\end{align}
\end{lemma}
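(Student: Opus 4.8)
The plan is to mirror the proof of Lemma \ref{le:Preliminary-1}, using the weighted Sobolev inequality \eqref{ineq:weighted Sobolev} (with $d$ replaced by $\rho_0$, as justified just after that inequality) to trade two extra powers of the weight for one extra spatial derivative, iterating until each quantity is controlled by a summand explicitly present in the energy functional \eqref{higher-order energy function}. Throughout I will use that $\rho_0$ is bounded on the bounded interval $I$, so that $\rho_0^{m}\lesssim\rho_0^{m'}$ whenever $m\ge m'$, and that $\rho_0\in H^5(I)$ implies $(\rho_0)_x\in H^4(I)\hookrightarrow L^\infty(I)$.

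I would establish \eqref{Preliminary-6} working from the top order downward. The term $\|\rho_0^3\partial_x^6v\|_{L^2}^2=\int_I\rho_0^6(\partial_x^6v)^2\,\diff x$ is exactly the $k=6$ summand $\|\sqrt{\rho_0^6}\partial_x^6v\|_{L^2}^2$ of $E$, hence $\lesssim E$. For the middle term, \eqref{ineq:weighted Sobolev} with $k=4$ and $w=\partial_x^5v$ gives
\[\|\rho_0^2\partial_x^5v\|_{L^2}^2=\int_I\rho_0^4(\partial_x^5v)^2\,\diff x\lesssim\int_I\rho_0^6\big[(\partial_x^5v)^2+(\partial_x^6v)^2\big]\,\diff x,\]
where the first resulting integral satisfies $\int_I\rho_0^6(\partial_x^5v)^2\lesssim\int_I\rho_0^5(\partial_x^5v)^2=\|\sqrt{\rho_0^5}\partial_x^5v\|_{L^2}^2\lesssim E$ (the $k=5$ summand, using $\rho_0$ bounded) and the second is $\lesssim E$ by the bound already obtained. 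Finally \eqref{ineq:weighted Sobolev} with $k=2$ and $w=\partial_x^4v$ yields
\[\|\rho_0\partial_x^4v\|_{L^2}^2=\int_I\rho_0^2(\partial_x^4v)^2\,\diff x\lesssim\int_I\rho_0^4\big[(\partial_x^4v)^2+(\partial_x^5v)^2\big]\,\diff x\lesssim E,\]
the two summands being the $k=4$ term of $E$ and the middle term of \eqref{Preliminary-6}, respectively. This proves \eqref{Preliminary-6}.

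The three consequences then follow exactly as \eqref{Preliminary-2}, \eqref{Preliminary-3} and \eqref{Preliminary-5} do in Lemma \ref{le:Preliminary-1}. For \eqref{Preliminary-7}, since $\partial_t\eta=v$ and $\eta(\cdot,0)=e$ by \eqref{fluid particle}, one has $\partial_x^k\eta(\cdot,t)=\int_0^t\partial_x^kv(\cdot,s)\,\diff s$ for $k\ge 2$; multiplying by the relevant weight $\rho_0^j$ and applying Minkowski's integral inequality reduces each term to the time-integral of the corresponding term in \eqref{Preliminary-6}, producing the factor $t\sup_{0\le s\le t}E^{1/2}$. For the pointwise bounds \eqref{Preliminary-8} I would invoke $H^1(I)\hookrightarrow L^\infty(I)$; for instance
\[\|\rho_0^2\partial_x^4v\|_{L^\infty}\lesssim\|\rho_0^2\partial_x^4v\|_{H^1},\]
where $\|\rho_0^2\partial_x^4v\|_{L^2}\lesssim E^{1/2}$ is the $k=4$ summand and $\partial_x(\rho_0^2\partial_x^4v)=2\rho_0(\rho_0)_x\partial_x^4v+\rho_0^2\partial_x^5v$ is bounded in $L^2$ using $(\rho_0)_x\in L^\infty$ together with the first and second terms of \eqref{Preliminary-6}. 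The other two bounds in \eqref{Preliminary-8} are handled identically, using \eqref{Preliminary-1} to control the lowest-weight $L^2$ piece. Then \eqref{Preliminary-9} follows from \eqref{Preliminary-8} precisely as \eqref{Preliminary-5} follows from \eqref{Preliminary-3}, again via $\partial_x^k\eta=\int_0^t\partial_x^kv\,\diff s$ and Minkowski.

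The only genuine bookkeeping, and hence the step most prone to error, is the iteration in \eqref{Preliminary-6}. The point is that one cannot reduce $\int_I\rho_0^2(\partial_x^4v)^2$ to the higher-weight energy term $\int_I\rho_0^4(\partial_x^4v)^2$ by monotonicity, since $\rho_0^2\gtrsim\rho_0^4$ near the vacuum boundary; the weighted Sobolev inequality \eqref{ineq:weighted Sobolev} is exactly what allows the lower-weight integral to be dominated by the higher-weight integral at the cost of one additional derivative. One must therefore match, at each order, the integer weight produced after raising (e.g. $\rho_0^6$) against the half-integer weights actually present in $E$ (e.g. $\rho_0^5$), where the boundedness of $\rho_0$ is used. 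No new difficulty beyond Lemma \ref{le:Preliminary-1} arises: \eqref{Preliminary-6}--\eqref{Preliminary-9} form a weighted, higher-order replica of that argument.
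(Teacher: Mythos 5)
Your proposal is correct and follows exactly the route the paper intends: the paper's proof of this lemma is just the remark that one repeats the procedure of Lemma \ref{le:Preliminary-1} with the weighted Sobolev inequality \eqref{ineq:weighted Sobolev}, and your iteration (matching the raised weights $\rho_0^6$, $\rho_0^4$ against the energy terms $\sqrt{\rho_0^k}\partial_x^k v$, $k=4,5,6$, using boundedness of $\rho_0$), together with Minkowski for the $\eta$-bounds and $H^1(I)\hookrightarrow L^\infty(I)$ for the pointwise bounds, is precisely that procedure spelled out. No gaps.
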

\begin{proof}
	The proof follows a similar procedure as in that of Lemma \ref{le:Preliminary-1} by repeating using the weighted Sobolev inequality \eqref{ineq:weighted Sobolev}.
\end{proof}

\subsection{The a priori assumption.}\label{The a priori assumption} Let \(c_1\) be the Sobolev embedding \(H^1(I)\hookrightarrow L^\infty(I)\) constant, and \(c_2\) be the constant in the inequality \eqref{Preliminary-1}. Set \(M_1=2M_0\).
Let \((v,\eta)\) satisfy \eqref{fluid particle} and \eqref{Lagrangian variable}. Assume that there exists some suitably small \(T\in(0,1/(2c_1c_2\sqrt{M_1})]\cap (0,1)\) such that 
\begin{align}\label{a priori assumption}
	\sup_{0\leq t\leq T}E(t,v)\leq M_1.
\end{align}
Then one has 
\begin{align}\label{eta-bound}
	1/2\leq \eta_x(x,t)\leq 3/2, \quad (x,t)\in I\times [0,T].
\end{align}
Indeed, it follows from \eqref{fluid particle} that
\begin{align*}
	\eta(x,t)=x+\int_0^tv(x,s)\,\diff s,\quad (x,t)\in I\times [0,T],
\end{align*}
which leads to 
\begin{equation*}
\begin{aligned}
	|\eta_x(x,t)-1|&\leq\int_0^t\|v_x(\cdot,s)\|_{L^\infty(I)}\,\diff s\leq T\sup_{0\leq t\leq T}\|v_x(\cdot,t)\|_{L^\infty(I)}\\
	&\leq c_1T\sup_{0\leq t\leq T}\|v_x(\cdot,t)\|_{H^1(I)}\leq c_1c_2T\sup_{0\leq t\leq T}E^{1/2}(t,v)\\
	&\leq c_1c_2\sqrt{M_1}T\leq 1/2, \quad (x,t)\in I\times [0,T].
\end{aligned}
\end{equation*}
Hence \eqref{eta-bound} follows.\\

\begin{remark}\label{determin M_1 and T} The a priori assumption \eqref{a priori assumption} will be closed by the a priori bound \eqref{I-Priori-ellip-33}.     
\end{remark}

\section{Energy Estimates}\label{Energy Estimates}

This section is devoted to deducing some basic energy estimates on time-derivatives. Let \((v,\eta)\) be a solution to the problem \eqref{eq:main-2} satisfying \eqref{a priori assumption}.\\

\noindent{\bf{Estimate of \(\sum_{k=0}^3\|\sqrt{\rho_0}\partial_t^kv\|_{L^2(I)}\).}}
We first estimate \(\|\sqrt{\rho_0}\partial_t^3v\|_{L^2(I)}\). To this end,
one can apply \(\partial_t^3\) to Equation \(\eqref{eq:main-2}_1\), multiplying it by \(\partial_t^3v\), after some elementary computations, to obtain that
\begin{equation}\label{II-Priori-time-1}
\begin{aligned}
&\frac{1}{2}\int_I \rho_0(\partial_t^3v)^2\,\diff x
+\int_0^t\int_I\frac{\rho_0(\partial_t^3v_x)^2}{\eta_x^2}\,\diff x\diff s\\
&\quad=\frac{1}{2}\int_I \rho_0(\partial_t^3v)^2(x,0)\,\diff x+\int_0^t\int_I\partial_t^3\bigg({\frac{\rho_0^2}{\eta_x^2}}\bigg)\partial_t^3v_x\,\diff x\diff s\\
&\qquad-\int_0^t\int_I\bigg[\partial_t^3\bigg(\frac{\rho_0v_x}{\eta_x^2}\bigg)\partial_t^3v_x
-\frac{\rho_0(\partial_t^3v_x)^2}{\eta_x^2}\bigg]\,\diff x\diff s.
\end{aligned}
\end{equation}

Using \eqref{eta-bound},  one finds that
\begin{equation}\label{Add-1}
\begin{aligned}
\bigg|\partial_t^3\bigg(\frac{1}{\eta_x^2}\bigg)\bigg|\lesssim |\partial_t^2v_x|+|v_x\partial_tv_x|+|v_x|^3,
\end{aligned}
\end{equation}
and
\begin{equation}\label{Add-2}
\begin{aligned}
\bigg|\partial_t^3\bigg(\frac{v_x}{\eta_x^2}\bigg)\partial_t^3v_x
-\frac{(\partial_t^3v_x)^2}{\eta_x^2}\bigg|\lesssim \big[|v_x\partial_t^2v_x|+|\partial_tv_x|(v_x^2+|\partial_tv_x|)+|v_x|^4\big]|\partial_t^3v_x|.
\end{aligned}
\end{equation}
Then one may use Cauchy's inequality to get
\begin{equation}\label{II-Priori-time-2}
\begin{aligned}
&\bigg|\int_0^t\int_I\partial_t^3\bigg({\frac{\rho_0^2}{\eta_x^2}}\bigg)\partial_t^3v_x\,\diff x\diff s\bigg|\\
&\quad\leq \frac{1}{100}\int_0^t\int_I\rho_0(\partial_t^3v_x)^2\,\diff x\diff s
+C\int_0^t\int_I\rho_0(\partial_t^2v_x)^2\,\diff x\diff s\\
&\qquad+C\int_0^t\|v_x\|_{L^\infty}^2\int_I\rho_0(\partial_tv_x)^2\,\diff x\diff s
+C\int_0^t\|v_x\|_{L^\infty}^4\int_I\rho_0v_x^2\,\diff x\diff s\\
&\quad\leq \frac{1}{100}\int_0^t\int_I\rho_0(\partial_t^3v_x)^2\,\diff x\diff s+Ct P(\sup_{0\leq s\leq t}E^{1/2}(s,v)),
\end{aligned}
\end{equation}
and
\begin{equation}\label{II-Priori-time-3}
\begin{aligned}
&\bigg|\int_0^t\int_I\bigg[\partial_t^3(\frac{\rho_0v_x}{\eta_x^2})\partial_t^3v_x
-{\frac{\rho_0(\partial_t^3v_x)^2}{\eta_x^2}}\bigg]\,\diff x\diff s\bigg|\\
&\quad\leq \frac{1}{100}\int_0^t\int_I\rho_0(\partial_t^3v_x)^2\,\diff x\diff s
+C\int_0^t\|v_x\|_{L^\infty}^2\int_I\rho_0(\partial_t^2v_x)^2\,\diff x\diff s\\
&\qquad+C\int_0^t\|v_x\|_{L^\infty}^4\int_I\rho_0(\partial_tv_x)^2\,\diff x\diff s
+C\int_0^t\|\partial_tv_x\|_{L^\infty}^2\int_I\rho_0(\partial_tv_x)^2\,\diff x\diff s\\
&\qquad+C\int_0^t\|v_x\|_{L^\infty}^6\int_I\rho_0v_x^2\,\diff x\diff s\\
&\quad\leq \frac{1}{100}\int_0^t\int_I\rho_0(\partial_t^3v_x)^2\,\diff x\diff s+Ct P(\sup_{0\leq s\leq t}E^{1/2}(s,v)),
\end{aligned}
\end{equation}
where \eqref{Preliminary-3} was used in \eqref{II-Priori-time-2} and \eqref{II-Priori-time-3}, while,
\begin{equation*}
\begin{aligned}
\|\partial_tv_x\|_{L^\infty}&\lesssim \|\partial_tv_x\|_{L^2}+\|\partial_tv_{xx}\|_{L^2}\\
&\lesssim \|\rho_0\partial_tv_x\|_{L^2}+\|\rho_0\partial_tv_{xx}\|_{L^2}+\|\rho_0\partial_t\partial_x^3v\|_{L^2}\\
&\lesssim E^{1/2}(s,v)+\|\rho_0^2\partial_t\partial_x^3v\|_{L^2}+\|\rho_0^2\partial_t\partial_x^4v\|_{L^2}
\lesssim E^{1/2}(s,v)
\end{aligned}
\end{equation*}
was used in \eqref{II-Priori-time-3}, here the weighted Sobolev inequality \eqref{ineq:weighted Sobolev} was utilized. Here and thereafter \(P(\cdot)\) denotes a generic polynomial function of its arguments.

Due to the bound \eqref{eta-bound}, and noting that the term \(\int_0^t\int_I\frac{\rho_0(\partial_t^3v_x)^2}{\eta_x^2}\,\diff x\diff s\) on the left hand side (which will be abbreviated as LHS from now on) of \eqref{II-Priori-time-1} is bounded from below by \(\frac{4}{9}\int_0^t\int_I\rho_0(\partial_t^3v_x)^2\,\diff x\diff s\),
hence one inserts \eqref{II-Priori-time-2} and \eqref{II-Priori-time-3} into \eqref{II-Priori-time-1} to obtain
\begin{equation}\label{II-Priori-time-4}
\begin{aligned}
\int_I \rho_0(\partial_t^3v)^2\,\diff x
+\int_0^t\int_I\rho_0(\partial_t^3v_x)^2\,\diff x\diff s
\leq M_0+Ct P(\sup_{0\leq s\leq t}E^{1/2}(s,v)).
\end{aligned}
\end{equation}

Next, we estimate \(\|\sqrt{\rho_0}\partial_t^2v\|_{L^2(I)}\). Since
\begin{equation*}
\begin{aligned}
\partial_t^2v(x,t)=\partial_t^2v(x,0)+\int_0^t\partial_t^3v(x,s)\,\diff s,
\end{aligned}
\end{equation*}
it then follows from Cauchy's inequality and Fubini's theorem that
\begin{equation}\label{I-Priori-time-12}
\begin{aligned}
\int_I \rho_0(\partial_t^2v)^2\,\diff x
&\lesssim \int_I \rho_0(\partial_t^2v)^2(x,0)\,\diff x
+t\int_0^t\int_I \rho_0(\partial_t^3v)^2\,\diff x\diff s\\
&\leq M_0+Ct P(\sup_{0\leq s\leq t}E^{1/2}(s,v)),
\end{aligned}
\end{equation}
where \eqref{II-Priori-time-4} has been used in the last line. Similarly, by \eqref{I-Priori-time-12}, one can get
\begin{equation}\label{I-Priori-time-8}
\begin{aligned}
\int_I \rho_0(\partial_tv)^2\,\diff x
\leq M_0+Ct P(\sup_{0\leq s\leq t}E^{1/2}(s,v)),
\end{aligned}
\end{equation}
and
\begin{equation}\label{I-Priori-time-3}
\begin{aligned}
\int_I \rho_0v^2\,\diff x
\leq M_0+Ct P(\sup_{0\leq s\leq t}E^{1/2}(s,v)).
\end{aligned}
\end{equation}

\bigskip
\noindent{\bf{Estimate of \(\sum_{k=0}^2\|\sqrt{\rho_0}\partial_t^kv_x\|_{L^2(I)}\).}}
We start with \(\|\sqrt{\rho_0}\partial_t^2v_x\|_{L^2(I)}\).
Applying \(\partial_t^2\) to Equation \(\eqref{eq:main-2}_1\),
and multiplying it by \(\partial_t^3v\), one gets by some direct calculations that
\begin{equation}\label{II-Priori-time-5}
\begin{aligned}
&\int_0^t\int_I \rho_0(\partial_t^3v)^2\,\diff x\diff s
+\frac{1}{2}\int_I\frac{\rho_0(\partial_t^2v_x)^2}{\eta_x^2}\,\diff x\\
&\quad=\frac{1}{2}\int_I\rho_0(\partial_t^2v_x)^2(x,0)\,\diff x-2\int_0^t\int_I\bigg(-3\frac{\rho_0^2v_x^2}{\eta_x^4}
+\frac{\rho_0^2\partial_tv_x}{\eta_x^3}\bigg)\partial_t^3v_x\,\diff x\diff s\\
&\qquad-\int_0^t\int_I\frac{\rho_0v_x(\partial_t^2v_x)^2}{\eta_x^3}\,\diff x\diff s
-6\int_0^t\int_I\frac{\rho_0v_x^3\partial_t^3v_x}{\eta_x^4}\,\diff x\diff s \\
&\qquad +6\int_0^t\int_I\frac{\rho_0v_x\partial_tv_x\partial_t^3v_x}{\eta_x^3}\,\diff x\diff s.
\end{aligned}
\end{equation}
The above three terms on the right hand side (which will be abbreviated as RHS from now on) of \eqref{II-Priori-time-5}  can be estimated as follows:
\begin{equation}\label{II-Priori-time-6}
\begin{aligned}
&\bigg|\int_0^t\int_I\bigg(-3\frac{\rho_0^2v_x^2}{\eta_x^4}
+\frac{\rho_0^2\partial_tv_x}{\eta_x^3}\bigg)\partial_t^3v_x\,\diff x\diff s\bigg|\\
&\quad\lesssim \int_0^t\int_I\rho_0(\partial_t^3v_x)^2\,\diff x\diff s+\int_0^t\|v_x\|_{L^\infty}^2\int_I\rho_0v_x^2\,\diff x\diff s\\
&\qquad+\int_0^t\int_I\rho_0(\partial_tv_x)^2\,\diff x\diff s\\
&\quad\leq M_0+Ct P(\sup_{0\leq s\leq t}E^{1/2}(s,v)),
\end{aligned}
\end{equation}
\begin{equation}\label{II-Priori-time-7}
\begin{aligned}
\bigg|\int_0^t\int_I\frac{\rho_0v_x(\partial_t^2v_x)^2}{\eta_x^3}\,\diff x\diff s\bigg|
&\lesssim \int_0^t\|v_x\|_{L^\infty}\int_I\rho_0(\partial_t^2v_x)^2\,\diff x\diff s\\
&\leq Ct P(\sup_{0\leq s\leq t}E^{1/2}(s,v)),
\end{aligned}
\end{equation}
\begin{equation}\label{II-Priori-time-8}
\begin{aligned}
\bigg|\int_0^t\int_I\frac{\rho_0v_x^3\partial_t^3v_x}{\eta_x^4}\,\diff x\diff s\bigg|
&\lesssim \int_0^t\int_I\rho_0(\partial_t^3v_x)^2\,\diff x\diff s\\
&\quad+\int_0^t\|v_x\|_{L^\infty}^4\int_I\rho_0v_x^2\,\diff x\diff s\\
&\leq M_0+Ct P(\sup_{0\leq s\leq t}E^{1/2}(s,v)),
\end{aligned}
\end{equation}
and
\begin{equation}\label{II-Priori-time-9}
\begin{aligned}
\bigg|\int_0^t\int_I\frac{\rho_0v_x\partial_tv_x\partial_t^3v_x}{\eta_x^3}\,\diff x\diff s\bigg|
&\lesssim \int_0^t\int_I\rho_0(\partial_t^3v_x)^2\,\diff x\diff s\\
&\quad+\int_0^t\|v_x\|_{L^\infty}^2\int_I\rho_0(\partial_tv_x)^2\,\diff x\diff s\\
&\leq M_0+Ct P(\sup_{0\leq s\leq t}E^{1/2}(s,v)).
\end{aligned}
\end{equation}
Here \eqref{Preliminary-3} has been used in the last line of \eqref{II-Priori-time-6}- \eqref{II-Priori-time-9}.

Hence substituting \eqref{II-Priori-time-6}-\eqref{II-Priori-time-9} into \eqref{II-Priori-time-5} yields
\begin{equation}\label{II-Priori-time-10}
\begin{aligned}
\int_0^t\int_I \rho_0(\partial_t^3v)^2\,\diff x\diff s
+\int_I\rho_0(\partial_t^2v_x)^2\,\diff x
\leq M_0+Ct P(\sup_{0\leq s\leq t}E^{1/2}(s,v)).
\end{aligned}
\end{equation}

We now consider \(\|\sqrt{\rho_0}\partial_tv_x\|_{L^2(I)}\). Since
\begin{equation*}
\begin{aligned}
\partial_tv_x(x,t)=\partial_tv_x(x,0)+\int_0^t\partial_t^2v_x(x,s)\,\diff s,
\end{aligned}
\end{equation*}
it then follows from Cauchy's inequality and Fubini's theorem that
\begin{equation}\label{I-Priori-time-21}
\begin{aligned}
\int_I \rho_0(\partial_tv_x)^2\,\diff x
&\lesssim \int_I \rho_0(\partial_tv_x)^2(x,0)\,\diff x
+t\int_0^t\int_I \rho_0(\partial_t^2v_x)^2\,\diff x\diff s\\
&\leq M_0+Ct P(\sup_{0\leq s\leq t}E^{1/2}(s,v))\quad \text{for\ small}\ t>0,
\end{aligned}
\end{equation}
where \eqref{II-Priori-time-10} has been used in the last line.
In view of \eqref{I-Priori-time-21}, one can derive similarly that
\begin{equation}\label{I-Priori-time-16}
\begin{aligned}
\int_I\rho_0v_x^2\,\diff x
\leq M_0+Ct P(\sup_{0\leq s\leq t}E^{1/2}(s,v)).
\end{aligned}
\end{equation}

\section{Elliptic estimates}\label{Elliptic estimates}

Having the estimates on time-derivatives in Section \ref{Energy Estimates}, we will use the elliptic theory to gain the spatial regularity of the solutions in this section.\\

\noindent{\bf{Estimate of \(\|\rho_0v_{xx}\|_{L^2(I)}\).}}
 It follows from Equation \(\eqref{eq:main-2}_1\) that
\begin{equation}\label{I-Priori-ellip-1}
\begin{aligned}
\bigg\|\bigg({\frac{\rho_0v_x}{\eta_x^2}}\bigg)_{x}\bigg\|_{L^2}^2
&\leq \|\rho_0\partial_tv\|_{L^2}^2+\bigg\|\bigg({\frac{\rho_0^2}{\eta_x^2}}\bigg)_x\bigg\|_{L^2}^2\\
&\leq M_0+Ct P(\sup_{0\leq s\leq t}E^{1/2}(s,v)),
\end{aligned}
\end{equation}
in which \(\|\rho_0\partial_tv\|_{L^2}\) is bounded by \eqref{I-Priori-time-8}, and the bound on \(\big\|\big({\frac{\rho_0^2}{\eta_x^2}}\big)_x\big\|_{L^2}\) relies on
\begin{equation*}
\begin{aligned}
\bigg|\bigg({\frac{\rho_0^2}{\eta_x^2}}\bigg)_{x}\bigg|\lesssim 1+\rho_0|\eta_{xx}|,
\end{aligned}
\end{equation*}
and hence
\begin{equation}\label{I-Priori-ellip-2}
\begin{aligned}
\bigg\|\bigg({\frac{\rho_0^2}{\eta_x^2}}\bigg)_x\bigg\|_{L^2}
&\lesssim 1+\|\eta_{xx}\|_{L^2}\lesssim 1+t P(\sup_{0\leq s\leq t}E^{1/2}(s,v)).
\end{aligned}
\end{equation}
where one has used \eqref{Preliminary-2}.

Next, we estimate \(\|\rho_0v_{xx}\|_{L^2(I)}\). Note that
\begin{equation}\label{I-Priori-ellip-2.2}
\begin{aligned}
\rho_0\eta_x^{-2}v_{xx}+(\rho_0)_x\eta_x^{-2}v_{x}
=\bigg(\frac{\rho_0v_x}{\eta_x^2}\bigg)_{x}-
\rho_0(\eta_x^{-2})_{x}v_x.
\end{aligned}
\end{equation}
The last term in \eqref{I-Priori-ellip-2.2} can be estimated as follows:
\begin{equation}\label{I-Priori-ellip-2.4}
\begin{aligned}
\|\rho_0(\eta_x^{-2})_{x}v_x\|_{L^2}
\lesssim \|v_x\|_{L^2}\|\eta_{xx}\|_{L^\infty}
\leq Ct P(\sup_{0\leq s\leq t}E^{1/2}(s,v)),
\end{aligned}
\end{equation}
where \eqref{Preliminary-1} and \eqref{Preliminary-5} were used.
We then insert \eqref{I-Priori-ellip-1} and \eqref{I-Priori-ellip-2.4} into \eqref{I-Priori-ellip-2.2} to get
\begin{equation}\label{I-Priori-ellip-3}
\begin{aligned}
\|\rho_0\eta_x^{-2}v_{xx}+(\rho_0)_x\eta_x^{-2}v_{x}\|_{L^2}^2
\leq M_0+CtP(\sup_{0\leq s\leq t}E^{1/2}(s,v)).
\end{aligned}
\end{equation}
Integration by parts yields
\begin{equation}\label{I-Priori-ellip-4}
\begin{aligned}
&\|\rho_0\eta_x^{-2}v_{xx}\|_{L^2}^2\\
&\quad= \|\rho_0\eta_x^{-2}v_{xx}+(\rho_0)_x\eta_x^{-2}v_{x}\|_{L^2}^2\\
&\qquad-\|(\rho_0)_x\eta_x^{-2}v_{x}\|_{L^2}^2
-\int_I\rho_0(\rho_0)_x\eta_x^{-4}(v_x^2)_x\,\diff x\\
&\quad=\|\rho_0\eta_x^{-2}v_{xx}+(\rho_0)_x\eta_x^{-2}v_{x}\|_{L^2}^2+
\int_I\rho_0[(\rho_0)_x\eta_x^{-4}]_{x}v_x^2\,\diff x\\
&\quad\leq M_0+Ct P(\sup_{0\leq s\leq t}E^{1/2}(s,v)),
\end{aligned}
\end{equation}
where one has used \eqref{I-Priori-ellip-3} and the estimate
\begin{equation}\label{additial estimate}
\begin{aligned}
&\bigg|\int_I\rho_0[(\rho_0)_x\eta_x^{-4}]_xv_x^2\,\diff x\bigg|\\
&\quad\lesssim \bigg|\int_I\rho_0(\rho_0)_{xx}\eta_x^{-4}v_x^2\,\diff x\bigg|
+\bigg|\int_I\rho_0(\rho_0)_x\eta_x^{-5}\eta_{xx}v_x^2\,\diff x\bigg|\\
&\quad\lesssim
(1+\|\eta_{xx}\|_{L^\infty})\int_I\rho_0v_x^2\,\diff x\\
&\quad\leq M_0+Ct P(\sup_{0\leq s\leq t}E^{1/2}(s,v)).
\end{aligned}
\end{equation}
Here \eqref{Preliminary-5} and \eqref{I-Priori-time-16} have been used.
It follows from \eqref{eta-bound} and \eqref{I-Priori-ellip-4} that
\begin{equation}\label{I-Priori-ellip-5}
\begin{aligned}
\|\rho_0v_{xx}\|_{L^2}^2\leq M_0+Ct P(\sup_{0\leq s\leq t}E^{1/2}(s,v)).
\end{aligned}
\end{equation}

\bigskip
\noindent{\bf{Estimate of \(\|\rho_0^{3/2}\partial_x^3v\|_{L^2(I)}\).}}
First, it follows from \eqref{ineq:weighted Sobolev}, \eqref{I-Priori-time-8} and \eqref{I-Priori-time-21} that
\begin{equation}\label{I-Priori-ellip-6}
\begin{aligned}
\|(\rho_0\partial_tv)_x\|_{L^2}^2
&\lesssim \|\partial_tv\|_{L^2}^2+\|\rho_0\partial_tv_x\|_{L^2}^2\\
&\lesssim \|\rho_0\partial_tv\|_{L^2}^2+\|\rho_0\partial_tv_x\|_{L^2}^2\\
&\leq M_0+Ct P(\sup_{0\leq s\leq t}E^{1/2}(s,v)).
\end{aligned}
\end{equation}
Since
\begin{equation*}
\begin{aligned}
\bigg|\bigg({\frac{\rho_0^2}{\eta_x^2}}\bigg)_{xx}\bigg|\lesssim 1+\rho_0|\eta_{xx}|+\rho_0^2(|\eta_{xx}|^2+|\partial_x^3\eta|),
\end{aligned}
\end{equation*}
one may estimate by Lemma \ref{le:Preliminary-1} that
\footnote{We can throw away the weight \(\rho_0\) in \(\|\rho_0\eta_{xx}\|_{L^2}\), \(\|\rho_0\eta_{xx}\|_{L^\infty}\), \(\|\rho_0^2\eta_{xxx}\|_{L^2}\) and similar terms later on since we work with the energy functional \(E(t,v)\), see the difference when one works with a lower-order energy functional in Subsection \ref{lower-order energy function}.}
\begin{equation}\label{I-Priori-ellip-7}
\begin{aligned}
\bigg\|\bigg({\frac{\rho_0^2}{\eta_x^2}}\bigg)_{xx}\bigg\|_{L^2}
&\lesssim 1+\|\eta_{xx}\|_{L^2}+\|\eta_{xx}\|_{L^\infty}\|\eta_{xx}\|_{L^2}+\|\partial_x^3\eta\|_{L^2}\\
&\leq 1+Ct P(\sup_{0\leq s\leq t}E^{1/2}(s,v)).
\end{aligned}
\end{equation}
It then follows from \eqref{I-Priori-ellip-6} and \eqref{I-Priori-ellip-7} that
\begin{equation}\label{I-Priori-ellip-10}
\begin{aligned}
\bigg\|\bigg({\frac{\rho_0v_x}{\eta_x^2}}\bigg)_{xx}\bigg\|_{L^2}^2
&\leq \|(\rho_0\partial_tv)_x\|_{L^2}^2+\bigg\|\bigg({\frac{\rho_0^2}{\eta_x^2}}\bigg)_{xx}\bigg\|_{L^2}^2\\
&\leq M_0+Ct P(\sup_{0\leq s\leq t}E^{1/2}(s,v)).
\end{aligned}
\end{equation}

To estimate \(\|\rho_0^{3/2}\partial_x^3v\|_{L^2(I)}\), we first write
\begin{equation}\label{I-Priori-ellip-11}
\begin{aligned}
\rho_0\eta_x^{-2}\partial_x^3v+2(\rho_0)_x\eta_x^{-2}v_{xx}=\bigg({\frac{\rho_0v_x}{\eta_x^2}}\bigg)_{xx}
-2\rho_0(\eta_x^{-2})_{x}v_{xx}-(\rho_0\eta_x^{-2})_{xx}v_x.
\end{aligned}
\end{equation}
Considering the second term on the RHS of \eqref{I-Priori-ellip-11}, one use \eqref{Preliminary-1} and \eqref{Preliminary-5} to estimate
\begin{equation}\label{I-Priori-ellip-12}
\begin{aligned}
\|\rho_0(\eta_x^{-2})_{x}v_{xx}\|_{L^\infty}
\leq \|v_{xx}\|_{L^2}\|\eta_{xx}\|_{L^\infty}
\leq Ct P(\sup_{0\leq s\leq t}E^{1/2}(s,v)).
\end{aligned}
\end{equation}
Since
\begin{equation}\label{I-Priori-ellip-13}
\begin{aligned}
|(\rho_0\eta_x^{-2})_{xx}|\lesssim 1+|\eta_{xx}|+\rho_0(|\eta_{xx}|^2+|\partial_x^3\eta|),
\end{aligned}
\end{equation}
the last term on the RHS of \eqref{I-Priori-ellip-11} may be estimated as follows:
\begin{equation}\label{I-Priori-ellip-14}
\begin{aligned}
\|(\rho_0\eta_x^{-2})_{xx}v_x\|_{L^2}
&\lesssim \|v_x\|_{L^2}+
\|v_x\|_{L^\infty}(\|\eta_{xx}\|_{L^2}\\
&\quad+\|\eta_{xx}\|_{L^2}\|\eta_{xx}\|_{L^\infty}
+\|\partial_x^3\eta\|_{L^2})\\
&\leq [M_0+Ct P(\sup_{0\leq s\leq t}E^{1/2}(s,v))]^{1/2}.
\end{aligned}
\end{equation}
In the last line of \eqref{I-Priori-ellip-14}, one has used \eqref{Preliminary-2}, \eqref{Preliminary-3}, \eqref{Preliminary-5} and the estimate
\begin{equation}\label{I-Priori-ellip-15}
\begin{aligned}
\|v_x\|_{L^2}
&\lesssim \|\rho_0v_x\|_{L^2}+\|\rho_0v_{xx}\|_{L^2}\\
&\leq [M_0+Ct P(\sup_{0\leq s\leq t}E^{1/2}(s,v))]^{1/2},
\end{aligned}
\end{equation}
which follows from \eqref{I-Priori-time-16} and \eqref{I-Priori-ellip-5}.
Inserting \eqref{I-Priori-ellip-10}, \eqref{I-Priori-ellip-12} and \eqref{I-Priori-ellip-14} into \eqref{I-Priori-ellip-11} yields
\begin{equation}\label{I-Priori-ellip-16}
\begin{aligned}
\|\rho_0\eta_x^{-2}\partial_x^3v+2(\rho_0)_x\eta_x^{-2}v_{xx}\|_{L^2}^2\leq M_0+Ct P(\sup_{0\leq s\leq t}E^{1/2}(s,v)).
\end{aligned}
\end{equation}
We then compensate a weight \(\rho_0^{1/2}\) and integrate by parts to deduce that
\begin{equation}\label{I-Priori-ellip-17}
\begin{aligned}
&\|\rho_0^{3/2}\eta_x^{-2}\partial_x^3v\|_L^2\\
&\quad= \|\rho_0^{3/2}\eta_x^{-2}\partial_x^3v+2\rho_0^{1/2}(\rho_0)_x\eta_x^{-2}v_{xx}\|_{L^2}^2
-4\|\rho_0^{1/2}(\rho_0)_x\eta_x^{-2}v_{xx}\|_{L^2}^2\\
&\qquad-2\int_I\rho_0^2(\rho_0)_x\eta_x^{-4}[(v_{xx})^2]_x\,\diff x\\
&\quad= \|\rho_0^{3/2}\eta_x^{-2}\partial_x^3v+2\rho_0^{1/2}(\rho_0)_x\eta_x^{-2}v_{xx}\|_{L^2}^2
+\int_I\rho_0^2[(\rho_0)_x\eta_x^{-4}]_{x}v_{xx}^2\,\diff x\\
&\quad\lesssim  \|\rho_0^{3/2}\eta_x^{-2}\partial_x^3v+2\rho_0^{1/2}(\rho_0)_x\eta_x^{-2}v_{xx}\|_{L^2}^2
+(1+\|\eta_{xx}\|_{L^\infty})\int_I\rho_0^2v_{xx}^2\,\diff x\\
&\quad\leq M_0+Ct P(\sup_{0\leq s\leq t}E^{1/2}(s,v)),
\end{aligned}
\end{equation}
where \eqref{Preliminary-5}, \eqref{I-Priori-ellip-5} and \eqref{I-Priori-ellip-16} have been used.
The inequality \eqref{I-Priori-ellip-17} and the bound \eqref{eta-bound} give
\begin{equation}\label{I-Priori-ellip-third}
\begin{aligned}
\|\rho_0^{3/2}\partial_x^3v\|_{L^2}^2\leq M_0+Ct P(\sup_{0\leq s\leq t}E^{1/2}(s,v)).
\end{aligned}
\end{equation}

\bigskip
\noindent{\bf{Estimate of \(\|\rho_0\partial_tv_{xx}\|_{L^2(I)}\).}} We first claim that
\begin{equation}\label{I-Priori-ellip-claim}
\begin{aligned}
\bigg\|\partial_t\bigg(\frac{\rho_0v_x}{\eta_x^2}\bigg)_x\bigg\|_{L^2}^2
\leq M_0+Ct P(\sup_{0\leq s\leq t}E^{1/2}(s,v)).
\end{aligned}
\end{equation}
To verify \eqref{I-Priori-ellip-claim}, we note that
\begin{equation}\label{I-Priori-ellip-20}
\begin{aligned}
\partial_t\bigg(\frac{\rho_0v_x}{\eta_x^2}\bigg)_x=\rho_0\partial_t^2v
+\partial_t\bigg(\frac{\rho_0^2}{\eta_x^2}\bigg)_x.
\end{aligned}
\end{equation}
Since
\begin{equation*}
\begin{aligned}
\bigg|\partial_t\bigg({\frac{\rho_0^2}{\eta_x^2}}\bigg)_x\bigg|
\lesssim \rho_0|v_x|+\rho_0^2(|v_x\eta_{xx}|+|v_{xx}|),
\end{aligned}
\end{equation*}
one obtains
\begin{equation}\label{I-Priori-ellip-21}
\begin{aligned}
\bigg\|\partial_t\bigg({\frac{\rho_0^2}{\eta_x^2}}\bigg)_x\bigg\|_{L^2}
&\lesssim \|\rho_0v_x\|_{L^2}+\|v_x\|_{L^\infty}\|\eta_{xx}\|_{L^2}+\|\rho_0v_{xx}\|_{L^2}\\
&\leq [M_0+Ct P(\sup_{0\leq s\leq t}E^{1/2}(s,v))]^{1/2},
\end{aligned}
\end{equation}
where one has used \eqref{Preliminary-2}, \eqref{Preliminary-3}, \eqref{I-Priori-time-16} and \eqref{I-Priori-ellip-5} in the last inequality. Then \eqref{I-Priori-ellip-claim} follows from \eqref{I-Priori-ellip-20}, \eqref{I-Priori-ellip-21} and \eqref{I-Priori-time-12}.

Direct calculations give
\begin{equation}\label{I-Priori-ellip-21.5}
\begin{aligned}
(\rho_0\partial_tv_x)_x
&=\partial_t\bigg({\frac{\rho_0v_x}{\eta_x^2}}\bigg)_x\eta_x^2+2\bigg({\frac{\rho_0v_x}{\eta_x^2}}\bigg)_x
\eta_xv_x\\
&\quad+2\partial_t\bigg(\frac{\rho_0v_x}{\eta_x^2}\bigg)\eta_{x}\eta_{xx}
+2\frac{\rho_0v_x}{\eta_x^2}(v_x\eta_{xx}+\eta_{x}v_{xx}).
\end{aligned}
\end{equation}
It follows from \eqref{I-Priori-ellip-claim} and \eqref{I-Priori-ellip-1} that the \(L^2-\) norm of the first two terms on the RHS of \eqref{I-Priori-ellip-21.5} has the desired bound. It suffices to handle the last two terms on the RHS of \eqref{I-Priori-ellip-21.5}. Considering the third term on the RHS of \eqref{I-Priori-ellip-21.5}, by \(H^1(I)\hookrightarrow L^\infty(I)\), one has
\begin{equation*}
\begin{aligned}
\bigg\|\partial_t\bigg({\frac{\rho_0v_x}{\eta_x^2}}\bigg)\eta_{x}\eta_{xx}\bigg\|_{L^2}&\lesssim
\|\eta_{xx}\|_{L^2}\left(\bigg\|\partial_t\bigg(\frac{\rho_0v_x}{\eta_x^2}\bigg)\bigg\|_{L^2}
+\bigg\|\partial_t\bigg(\frac{\rho_0v_x}{\eta_x^2}\bigg)_x\bigg\|_{L^2}\right)\\
&\leq Ct P(\sup_{0\leq s\leq t}E^{1/2}(s,v)),
\end{aligned}
\end{equation*}
where in the last line one has used \eqref{I-Priori-ellip-claim} and the estimate
\begin{equation}\label{I-Priori-ellip-21.6}
\begin{aligned}
\bigg\|\partial_t\bigg(\frac{\rho_0v_x}{\eta_x^2}\bigg)\bigg\|_{L^2}
&\lesssim \|\rho_0\partial_tv_x\|_{L^2}+\|v_x\|_{L^\infty}\|\rho_0v_x\|_{L^2}\\
&\leq M_0+C(t+1) P(\sup_{0\leq s\leq t}E^{1/2}(s,v)),
\end{aligned}
\end{equation}
which together with \(\|\eta_{xx}\|_{L^2}\) yields the bound \(Ct P(\sup_{0\leq s\leq t}E^{1/2}(s,v))\) since \(\|\eta_{xx}\|_{L^2}\) contributes a factor \(t\) due to \eqref{Preliminary-2}.
In the last term on the RHS of \eqref{I-Priori-ellip-21.5}, the \(L^2-\) norm of the first part is bounded by \(\|v_x\|_{L^\infty}^2\|\eta_{xx}\|_{L^2}\) which contributes the bound \(Ct P(\sup_{0\leq s\leq t}E^{1/2}(s,v))\),  and the second part can be estimated by \eqref{Sobolev inequaty} as follows:
\begin{equation*}
\begin{aligned}
\bigg\|\frac{\rho_0v_x}{\eta_x^2}\eta_{x}v_{xx}\bigg\|_{L^2}
&\lesssim \|v_x\|_{L^4}\|\rho_0v_{xx}\|_{L^4}\lesssim \|v_x\|_{H^{1/2}}\|\rho_0v_{xx}\|_{H^{1/2}}\\
&\leq M_0+Ct P(\sup_{0\leq s\leq t}E^{1/2}(s,v)),
\end{aligned}
\end{equation*}
since each factor in the second inequality enjoys the same bound \([M_0+Ct P(\sup_{0\leq s\leq t}E^{1/2}(s,v))]^{1/2}\). Indeed, one can apply \eqref{ineq:weighted Sobolev-0}, \eqref{ineq:weighted Sobolev}, \eqref{I-Priori-time-16}, \eqref{I-Priori-ellip-5} and \eqref{I-Priori-ellip-third} to deduce
\begin{equation*}
\begin{aligned}
\|v_x\|_{H^{1/2}}&\lesssim \|\rho_0^{1/2}v_x\|_{L^2}+\|\rho_0^{1/2}v_{xx}\|_{L^2}\\
&\lesssim \|\rho_0^{1/2}v_x\|_{L^2}+(\|\rho_0^{3/2}v_{xx}\|_{L^2}+\|\rho_0^{3/2}v_{xxx}\|_{L^2})\\
&\leq [M_0+Ct P(\sup_{0\leq s\leq t}E^{1/2}(s,v))]^{1/2},
\end{aligned}
\end{equation*}
and similarly,
\begin{equation*}
\begin{aligned}
\|\rho_0v_{xx}\|_{H^{1/2}}&\lesssim \|\rho_0^{1/2}(\rho_0v_{xx})\|_{L^2}+\|\rho_0^{1/2}(\rho_0v_{xx})_x\|_{L^2}\\
&\lesssim \|\rho_0^{1/2}v_{xx}\|_{L^2}+\|\rho_0^{3/2}v_{xxx}\|_{L^2}\\
&\lesssim \|\rho_0^{3/2}v_{xx}\|_{L^2}+\|\rho_0^{3/2}v_{xxx}\|_{L^2}\\
&\leq [M_0+Ct P(\sup_{0\leq s\leq t}E^{1/2}(s,v))]^{1/2}.
\end{aligned}
\end{equation*}

Taking all the cases into account and noticing 
\begin{equation*}
	\begin{aligned}
	(\rho_0\partial_tv_x)_x=\rho_0\partial_tv_{xx}+(\rho_0)_x\partial_tv_{x},	
	\end{aligned}
\end{equation*}
one obtains
\begin{equation}\label{I-Priori-ellip-21.7}
\begin{aligned}
\|\rho_0\partial_tv_{xx}+(\rho_0)_x\partial_tv_{x}\|_{L^2}^2
\leq M_0+Ct P(\sup_{0\leq s\leq t}E^{1/2}(s,v)).
\end{aligned}
\end{equation}
Then integration by parts yields
\begin{equation}\label{I-Priori-ellip-21.8}
	\begin{aligned}
		\|\rho_0\partial_tv_{xx}\|_{L^2}^2
		&= \|\rho_0\partial_tv_{xx}+(\rho_0)_x\partial_tv_{x}\|_{L^2}^2
		-\|(\rho_0)_x\partial_tv_{x}\|_{L^2}^2\\
		&\quad-\int_I\rho_0(\rho_0)_x[(\partial_tv_{x})^2]_x\,\diff x\\
		&= \|\rho_0\partial_tv_{xx}+(\rho_0)_x\partial_tv_{x}\|_{L^2}^2
		+\int_I\rho_0(\rho_0)_{xx}(\partial_tv_{x})^2\,\diff x\\
		&\leq M_0+Ct P(\sup_{0\leq s\leq t}E^{1/2}(s,v)),
	\end{aligned}
\end{equation} 
where \eqref{I-Priori-time-21} and \eqref{I-Priori-ellip-21.7} have been used. Therefore it follows from \eqref{I-Priori-ellip-21.8} that
\begin{equation}\label{I-Priori-ellip-22}
	\begin{aligned}
		\|\rho_0\partial_tv_{xx}\|_{L^2}^2
		\leq M_0+Ct P(\sup_{0\leq s\leq t}E^{1/2}(s,v)).
	\end{aligned}
\end{equation}

\bigskip
\noindent{\bf{Estimate of \(\|\rho_0^2\partial_x^4v\|_{L^2(I)}\).}} Applying \(\partial_x^2\) to Equation \(\eqref{eq:main-2}_1\) gives
\begin{equation}\label{I-Priori-ellip-23}
\begin{aligned}
\partial_x^3\bigg(\frac{\rho_0v_x}{\eta_x^2}\bigg)=(\rho_0\partial_tv)_{xx}
+\partial_x^3\bigg(\frac{\rho_0^2}{\eta_x^2}\bigg).
\end{aligned}
\end{equation}
A direct calculation shows that
\begin{equation*}
\begin{aligned}
\bigg|\partial_x^3\bigg({\frac{\rho_0^2}{\eta_x^2}}\bigg)\bigg|
&\lesssim 1+|\eta_{xx}|+\rho_0(\eta_{xx}^2+|\partial_x^3\eta|)\\
&\quad+\rho_0^2(|\eta_{xx}|^3+|\eta_{xx}\partial_x^3\eta|+|\partial_x^4\eta|).
\end{aligned}
\end{equation*}
We then may apply Lemma \ref{le:Preliminary-1} and Lemma \ref{le:Preliminary-2} to estimate
\begin{equation}\label{I-Priori-ellip-25}
\begin{aligned}
\bigg\|\partial_x^3\bigg({\frac{\rho_0^2}{\eta_x^2}}\bigg)\bigg\|_{L^2}
&\lesssim 1+\|\eta_{xx}\|_{L^2}+(\|\eta_{xx}\|_{L^\infty}\|\eta_{xx}\|_{L^2}+\|\partial_x^3\eta\|_{L^2})\\
&\quad+(\|\eta_{xx}\|_{L^\infty}^2\|\eta_{xx}\|_{L^2}
+\|\eta_{xx}\|_{L^\infty}\|\partial_x^3\eta\|_{L^2}
+\|\rho_0\partial_x^4\eta\|_{L^2})\\
&\leq 1+Ct P(\sup_{0\leq s\leq t}E^{1/2}(s,v)).
\end{aligned}
\end{equation}
On the other hand, it holds that
\begin{equation}\label{I-Priori-ellip-25.5}
\begin{aligned}
\|(\rho_0\partial_tv)_{xx}\|_{L^2}^2
&\leq \|(\rho_0)_{xx}\partial_tv\|_{L^2}^2+2\|(\rho_0)_x\partial_tv_{x}\|_{L^2}^2+\|\rho_0\partial_tv_{xx}\|_{L^2}^2\\
&\lesssim \|\rho_0\partial_tv\|_{L^2}^2+\|\rho_0\partial_tv_{x}\|_{L^2}^2+\|\rho_0\partial_tv_{xx}\|_{L^2}^2\\
&\leq M_0+Ct P(\sup_{0\leq s\leq t}E^{1/2}(s,v)),
\end{aligned}
\end{equation}
where one has used \eqref{ineq:weighted Sobolev} for \(\|\partial_tv\|_{L^2}\) and \(\|\partial_tv_{x}\|_{L^2}\) in the second inequality, and \eqref{I-Priori-time-8}, \eqref{I-Priori-time-21} and \eqref{I-Priori-ellip-22} in the last inequality.
In view of \eqref{I-Priori-ellip-23}, \eqref{I-Priori-ellip-25} and \eqref{I-Priori-ellip-25.5}, we deduce
\begin{equation}\label{I-Priori-ellip-26}
\begin{aligned}
\bigg\|\bigg({\frac{\rho_0v_x}{\eta_x^2}}\bigg)_{xxx}\bigg\|_{L^2}^2
&\leq \|(\rho_0\partial_tv)_{xx}\|_{L^2}^2
+\bigg\|\partial_x^3\bigg(\frac{\rho_0^2}{\eta_x^2}\bigg)\bigg\|_{L^2}^2\\
&\leq M_0+Ct P(\sup_{0\leq s\leq t}E^{1/2}(s,v)).
\end{aligned}
\end{equation}

To bound \(\|\rho_0^2\partial_x^4v\big\|_{L^2(I)}\), one notes that
\begin{equation}\label{I-Priori-ellip-27}
\begin{aligned}
\rho_0\eta_x^{-2}\partial_x^4v+3(\rho_0)_x\eta_x^{-2}\partial_x^3v
&=\partial_x^3\bigg(\frac{\rho_0v_x}{\eta_x^2}\bigg)-3\rho_0(\eta_x^{-2})_x\partial_x^3v\\
&\quad-3(\rho_0\eta_x^{-2})_{xx}v_{xx}-\partial_x^3(\rho_0\eta_x^{-2})v_x.
\end{aligned}
\end{equation}
Considering the second term on the RHS of \eqref{I-Priori-ellip-27}, one may estimate
\begin{equation}\label{I-Priori-ellip-27.2}
\begin{aligned}
\|\rho_0^2(\eta_x^{-2})_{x}\partial_x^3v\|_{L^2}
\lesssim \|\partial_x^3v\|_{L^2}\|\eta_{xx}\|_{L^\infty}
\leq Ct P(\sup_{0\leq s\leq t}E^{1/2}(s,v)),
\end{aligned}
\end{equation}
where \eqref{Preliminary-1} and \eqref{Preliminary-5} have been utilized.
Recalling \eqref{I-Priori-ellip-13}, we may estimate the third term on the RHS of \eqref{I-Priori-ellip-27} as follows:
\begin{equation}\label{I-Priori-ellip-27.5}
\begin{aligned}
\|\rho_0(\rho_0\eta_x^{-2})_{xx}v_{xx}\|_{L^2}
&\lesssim \|\rho_0v_{xx}\|_{L^2}+
\|v_{xx}\|_{L^\infty}\big(\|\eta_{xx}\|_{L^2}+\\
&\quad(\|\eta_{xx}\|_{L^2}\|\eta_{xx}\|_{L^\infty}
+\|\partial_x^3\eta\|_{L^2})\big)\\
&\leq [M_0+Ct P(\sup_{0\leq s\leq t}E^{1/2}(s,v))]^{1/2}.
\end{aligned}
\end{equation}
Since
\begin{equation}\label{I-Priori-ellip-28}
\begin{aligned}
|\partial_x^3(\rho_0\eta_x^{-2})|
&\lesssim 1+|\eta_{xx}|+(\eta_{xx}^2+|\partial_x^3\eta|)\\
&\quad+\rho_0(|\eta_{xx}|^3+|\eta_{xx}\partial_x^3\eta|+|\partial_x^4\eta|),
\end{aligned}
\end{equation}
the last term on the RHS of \eqref{I-Priori-ellip-27} can be estimated as follows:
\begin{equation}\label{I-Priori-ellip-28.5}
\begin{aligned}
&\|\rho_0\partial_x^3(\rho_0\eta_x^{-2})v_{x}\|_{L^2}\\
&\quad\lesssim \|\rho_0v_{x}\|_{L^2}+
\|v_{x}\|_{L^\infty}\big(\|\eta_{xx}\|_{L^2}
+(\|\eta_{xx}\|_{L^2}\|\eta_{xx}\|_{L^\infty}
+\|\partial_x^3\eta\|_{L^2})\\
&\qquad+(\|\eta_{xx}\|_{L^2}\|\eta_{xx}\|_{L^\infty}^2
+\|\eta_{xx}\|_{L^2}\|\rho_0\partial_x^3\eta\|_{L^\infty}
+\|\rho_0\partial_x^4\eta\|_{L^2})\big)\\
&\quad\leq [M_0+Ct P(\sup_{0\leq s\leq t}E^{1/2}(s,v))]^{1/2},
\end{aligned}
\end{equation}
where \eqref{I-Priori-time-16}, Lemma \ref{le:Preliminary-1} and Lemma \ref{le:Preliminary-2} have been used.
Thus inserting \eqref{I-Priori-ellip-26}, \eqref{I-Priori-ellip-27.2}, \eqref{I-Priori-ellip-27.5} and \eqref{I-Priori-ellip-28.5} into \eqref{I-Priori-ellip-27} yields
\begin{equation}\label{I-Priori-ellip-29}
\begin{aligned}
\|\rho_0^2\eta_x^{-2}\partial_x^4v+3\rho_0(\rho_0)_x\eta_x^{-2}\partial_x^3v\|_{L^2}^2\leq M_0+Ct P(\sup_{0\leq s\leq t}E^{1/2}(s,v)).
\end{aligned}
\end{equation}
We then use integration by parts and invoke \eqref{Preliminary-5}, \eqref{I-Priori-ellip-third} and \eqref{I-Priori-ellip-29} to find
\begin{equation}\label{I-Priori-ellip-30}
\begin{aligned}
&\|\rho_0^2\eta_x^{-2}\partial_x^4v\|_{L^2}^2\\
&\quad=\|\rho_0^2\eta_x^{-2}\partial_x^4v+3\rho_0(\rho_0)_x\eta_x^{-2}\partial_x^3v\|_{L^2}^2
-9\|\rho_0(\rho_0)_x\eta_x^{-2}\partial_x^3v\|_{L^2}^2\\
&\qquad-3\int_I\rho_0^3(\rho_0)_x\eta_x^{-4}[(\partial_x^3v)^2]_x\,\diff x\\
&\quad= \|\rho_0^2\eta_x^{-2}\partial_x^4v+3\rho_0(\rho_0)_x\eta_x^{-2}\partial_x^3v\|_{L^2}^2+\int_I\rho_0^3[(\rho_0)_x\eta_x^{-4}]_x(\partial_x^3v)^2\,\diff x\\
&\quad\lesssim \|\rho_0^2\eta_x^{-2}\partial_x^4v+3\rho_0(\rho_0)_x\eta_x^{-2}\partial_x^3v\|_{L^2}^2+(1+\|\eta_{xx}\|_{L^\infty})
\int_I\rho_0^3(\partial_x^3v)^2\,\diff x\\
&\quad\leq M_0+Ct P(\sup_{0\leq s\leq t}E^{1/2}(s,v)).
\end{aligned}
\end{equation}
Hence \eqref{I-Priori-ellip-30} and \eqref{eta-bound} imply
\begin{equation}\label{I-Priori-ellip-31}
\begin{aligned}
\|\rho_0^2\partial_x^4v\|_{L^2}^2\leq M_0+Ct P(\sup_{0\leq s\leq t}E^{1/2}(s,v)).
\end{aligned}
\end{equation}

\bigskip
\noindent{\bf{Estimate of \(\|\rho_0^{3/2}\partial_t\partial_x^3v\|_{L^2(I)}\).}}
Since
\begin{equation*}
\begin{aligned}
\bigg|\partial_t\bigg({\frac{\rho_0^2}{\eta_x^2}}\bigg)_{xx}\bigg|
&\lesssim |v_x|+\rho_0(|v_x\eta_{xx}|+|v_{xx}|)\\
&\quad+\rho_0^2(|v_x\eta_{xx}^2|+|v_x\partial_x^3\eta|+|v_{xx}\eta_{xx}|+|\partial_x^3v|),
\end{aligned}
\end{equation*}
it holds that
\begin{equation}\label{II-Priori-ellip-1}
\begin{aligned}
\bigg\|\partial_t\bigg({\frac{\rho_0^2}{\eta_x^2}}\bigg)_x\bigg\|_{L^2}
&\lesssim \|v_x\|_{L^2}+\|v_x\|_{L^\infty}\|\eta_{xx}\|_{L^2}+\|\rho_0v_{xx}\|_{L^2}\\
&\quad+\|v_x\|_{L^\infty}\|\eta_{xx}\|_{L^\infty}\|\eta_{xx}\|_{L^2}+\|v_x\|_{L^\infty}\|\partial_x^3\eta\|_{L^2}\\
&\quad+\|v_{xx}\|_{L^\infty}\|\eta_{xx}\|_{L^2}+\|\rho_0^2\partial_x^3v\|_{L^2}\\
&\leq [M_0+Ct P(\sup_{0\leq s\leq t}E^{1/2}(s,v))]^{1/2},
\end{aligned}
\end{equation}
where one has used \eqref{I-Priori-ellip-5}, \eqref{I-Priori-ellip-15}, \eqref{I-Priori-ellip-third}, Lemma \ref{le:Preliminary-1} and Lemma \ref{le:Preliminary-2} in the last inequality.
Applying \(\partial_{tx}^2\) to Equation \(\eqref{eq:main-2}_1\) gives
\begin{equation}\label{II-Priori-ellip-2}
\begin{aligned}
\partial_t\bigg(\frac{\rho_0v_x}{\eta_x^2}\bigg)_{xx}=(\rho_0\partial_t^2v)_x
+\partial_t\bigg(\frac{\rho_0^2}{\eta_x^2}\bigg)_{xx},
\end{aligned}
\end{equation}
we then utilize \eqref{I-Priori-time-12}, \eqref{II-Priori-time-10}, \eqref{II-Priori-ellip-1} and \eqref{II-Priori-ellip-2} to estimate
\begin{equation}\label{II-Priori-ellip-3}
\begin{aligned}
\bigg\|\partial_t\bigg(\frac{\rho_0v_x}{\eta_x^2}\bigg)_{xx}\bigg\|_{L^2}^2
&\leq \|(\rho_0)_x\partial_t^2v\|_{L^2}^2+\|\rho_0\partial_t^2v_x\|_{L^2}^2
+\bigg\|\partial_t\bigg(\frac{\rho_0^2}{\eta_x^2}\bigg)_{xx}\bigg\|_{L^2}^2\\
&\lesssim \|\rho_0\partial_t^2v\|_{L^2}^2+\|\rho_0\partial_t^2v_x\|_{L^2}^2
+\bigg\|\partial_t\bigg(\frac{\rho_0^2}{\eta_x^2}\bigg)_{xx}\bigg\|_{L^2}^2\\
&\leq M_0+Ct P(\sup_{0\leq s\leq t}E^{1/2}(s,v)),
\end{aligned}
\end{equation}
where  \eqref{ineq:weighted Sobolev} has been used for \(\|\partial_t^2v\|_{L^2}\) in the second inequality.

Write
\begin{equation*}
\begin{aligned}
&(\rho_0\partial_tv_x)_{xx}\\
&=\partial_t\bigg({\frac{\rho_0v_x}{\eta_x^2}}\bigg)_{xx}\eta_x^2
+4\partial_t\bigg(\frac{\rho_0v_x}{\eta_x^2}\bigg)_{x}\eta_{x}\eta_{xx}
+2\partial_t\bigg(\frac{\rho_0v_x}{\eta_x^2}\bigg)(\eta_{xx}^2+\eta_{x}\partial_x^3\eta)\\
&\quad+2\bigg({\frac{\rho_0v_x}{\eta_x^2}}\bigg)_{xx}\eta_xv_x
+4\bigg(\frac{\rho_0v_x}{\eta_x^2}\bigg)_{x}(v_x\eta_{xx}+\eta_{x}v_{xx})\\
&\quad+2\frac{\rho_0v_x}{\eta_x^2}(2v_{xx}\eta_{xx}+v_x\partial_x^3\eta+\eta_{x}\partial_x^3v)=\colon \sum_{k=1}^6I_k.
\end{aligned}
\end{equation*}
It is clear that \(\|I_1\|_{L^2}\) satisfies the desired bound due to \eqref{II-Priori-ellip-3}.
In view of \eqref{Preliminary-5} and \eqref{I-Priori-ellip-21}, one may estimate
\begin{equation*}
\begin{aligned}
\|I_2\|_{L^2}
\lesssim
\bigg\|\partial_t\bigg(\frac{\rho_0v_x}{\eta_x^2}\bigg)_x\bigg\|_{L^2}
\|\eta_{xx}\|_{L^\infty}
\leq Ct P(\sup_{0\leq s\leq t}E^{1/2}(s,v)).
\end{aligned}
\end{equation*}
The estimates  \eqref{I-Priori-ellip-21} and \eqref{I-Priori-ellip-21.6} together with \eqref{Preliminary-2} and
\eqref{Preliminary-5} yield
\begin{equation*}
\begin{aligned}
\|I_3\|_{L^2}
&\lesssim
\bigg\|\partial_t\bigg(\frac{\rho_0v_x}{\eta_x^2}\bigg)\bigg\|_{L^\infty}(\|\eta_{xx}\|_{L^\infty}\|\eta_{xx}\|_{L^2}
+\|\partial_x^3\eta\|_{L^2})\\
&\leq Ct P(\sup_{0\leq s\leq t}E^{1/2}(s,v)).
\end{aligned}
\end{equation*}
It follows from \eqref{I-Priori-ellip-1} and \eqref{I-Priori-ellip-10} that
\begin{equation*}
\begin{aligned}
\|I_4\|_{L^2}\lesssim
\|v_{x}\|_{L^\infty}\bigg\|\bigg(\frac{\rho_0v_x}{\eta_x^2}\bigg)_{xx}\bigg\|_{L^2}
\leq M_0+Ct P(\sup_{0\leq s\leq t}E^{1/2}(s,v)),
\end{aligned}
\end{equation*}
and
\begin{equation*}
\begin{aligned}
\|I_5\|_{L^2}&\lesssim
\bigg\|\bigg(\frac{\rho_0v_x}{\eta_x^2}\bigg)_{x}\bigg\|_{L^\infty}(\|v_{x}\|_{L^\infty}\|\eta_{xx}\|_{L^2}
+\|v_{xx}\|_{L^2})\\
&\leq M_0+Ct P(\sup_{0\leq s\leq t}E^{1/2}(s,v)),
\end{aligned}
\end{equation*}
where one has used \eqref{ineq:weighted Sobolev} to find
\begin{equation}\label{II-Priori-ellip-3.2}
\begin{aligned}
\|v_{xx}\|_{L^2}&\lesssim
\|\rho_0v_{xx}\|_{L^2}+\|\rho_0\partial_x^3v\|_{L^2}\\
&\lesssim
\|\rho_0v_{xx}\|_{L^2}+\|\rho_0^2\partial_x^3v\|_{L^2}+\|\rho_0^2\partial_x^4v\|_{L^2}\\
&\leq [M_0+Ct P(\sup_{0\leq s\leq t}E^{1/2}(s,v))]^{1/2},
\end{aligned}
\end{equation}
due to \eqref{I-Priori-ellip-5}, \eqref{I-Priori-ellip-third} and \eqref{I-Priori-ellip-31}, and thus
\begin{equation}\label{II-Priori-ellip-3.3}
	\begin{aligned}
		\|v_x\|_{L^\infty}
		&\lesssim
		\|v_x\|_{L^2}+\|v_{xx}\|_{L^2}\\
		&\leq [M_0+Ct P(\sup_{0\leq s\leq t}E^{1/2}(s,v))]^{1/2},
	\end{aligned}
\end{equation}
which follows from \eqref{I-Priori-ellip-15} and \eqref{II-Priori-ellip-3.2}.
\(I_6\) can be estimated as
\begin{equation*}
\begin{aligned}
\|I_6\|_{L^2}
&\lesssim
\|v_x\|_{L^\infty}(\|v_{xx}\|_{L^\infty}\|\eta_{xx}\|_{L^2}
+\|v_{x}\|_{L^\infty}\|\partial_x^3\eta\|_{L^2}+\|\rho_0\partial_x^3v\|_{L^2})\\
&\leq M_0+Ct P(\sup_{0\leq s\leq t}E^{1/2}(s,v)),
\end{aligned}
\end{equation*}
where, to estimate the term \(\|v_x\|_{L^\infty}\|\rho_0\partial_x^3v\|_{L^2}\), one has used the fact that each term enjoys the bound \([M_0+Ct P(\sup_{0\leq s\leq t}E^{1/2}(s,v))]^{1/2}\), which follows from \eqref{II-Priori-ellip-3.3} and
\begin{equation}\label{II-Priori-ellip-3.4}
\begin{aligned}
\|\rho_0\partial_x^3v\|_{L^2}
&\lesssim
\|\rho_0^2\partial_x^3v\|_{L^2}+\|\rho_0^2\partial_x^4v\|_{L^2}\\
&\leq [M_0+Ct P(\sup_{0\leq s\leq t}E^{1/2}(s,v))]^{1/2},
\end{aligned}
\end{equation}
due to \eqref{I-Priori-ellip-third} and \eqref{I-Priori-ellip-31}.
Collecting all the cases, we finally get
\begin{equation}\label{II-Priori-ellip-3.5}
\begin{aligned}
\|(\rho_0\partial_tv_x)_{xx}\|_{L^2}^2
\leq M_0+Ct P(\sup_{0\leq s\leq t}E^{1/2}(s,v)).
\end{aligned}
\end{equation}

Since
\begin{equation*}
\begin{aligned}
\rho_0\partial_t\partial_x^3v+2(\rho_0)_{x}\partial_tv_{xx}=
(\rho_0\partial_tv_x)_{xx}-(\rho_0)_{xx}\partial_tv_x,
\end{aligned}
\end{equation*}
it follows that
\begin{equation}\label{II-Priori-ellip-3.6}
\begin{aligned}
&\|\rho_0^{3/2}\partial_t\partial_x^3v+2\rho_0^{1/2}(\rho_0)_{x}\partial_tv_{xx}\|_{L^2}^2\\
&\quad\lesssim \|\rho_0^{1/2}(\rho_0\partial_tv_x)_{xx}\|_{L^2}^2+\|\rho_0^{1/2}\partial_tv_x\|_{L^2}^2\\
&\quad\lesssim \|(\rho_0\partial_tv_x)_{xx}\|_{L^2}^2+\|\rho_0^{1/2}\partial_tv_x\|_{L^2}^2\\
&\quad\leq M_0+Ct P(\sup_{0\leq s\leq t}E^{1/2}(s,v)),
\end{aligned}
\end{equation}
where one has used \eqref{I-Priori-time-21} and \eqref{II-Priori-ellip-3.5}.
Integration by parts gives
\begin{equation}\label{II-Priori-ellip-3.7}
\begin{aligned}
&\|\rho_0^{3/2}\partial_t\partial_x^3v\|_{L^2}^2\\
&\quad=\|\rho_0^{3/2}\partial_t\partial_x^3v+2\rho_0^{1/2}(\rho_0)_{x}\partial_tv_{xx}\|_{L^2}^2
-4\|\rho_0^{1/2}(\rho_0)_{x}\partial_tv_{xx}\|_{L^2}^2\\
&\qquad-2\int_I\rho_0^2(\rho_0)_x[(\partial_tv_{xx})^2]_x\,\diff x\\
&\quad=\|\rho_0^{3/2}\partial_t\partial_x^3v+2\rho_0^{1/2}(\rho_0)_{x}\partial_tv_{xx}\|_{L^2}^2
+2\int_I\rho_0^2(\rho_0)_{xx}(\partial_tv_{xx})^2\,\diff x\\
&\quad\lesssim \|\rho_0^{3/2}\partial_t\partial_x^3v+2\rho_0^{1/2}(\rho_0)_{x}\partial_tv_{xx}\|_{L^2}^2
+
\int_I\rho_0^2(\partial_tv_{xx})^2\,\diff x\\
&\quad\leq M_0+Ct P(\sup_{0\leq s\leq t}E^{1/2}(s,v)),
\end{aligned}
\end{equation}
where one has used  \eqref{I-Priori-ellip-22} and \eqref{II-Priori-ellip-3.6}.
Hence it follows from \eqref{II-Priori-ellip-3.7} that
\begin{equation}\label{II-Priori-ellip-4}
\begin{aligned}
\|\rho_0^{3/2}\partial_t\partial_x^3v\|_{L^2}^2
\leq M_0+Ct P(\sup_{0\leq s\leq t}E^{1/2}(s,v)).
\end{aligned}
\end{equation}

\bigskip
\noindent{\bf{Estimate of \(\|\rho_0^{5/2}\partial_x^5v\|_{L^2(I)}\).}} Applying \(\partial_x^3\) to Equation \(\eqref{eq:main-2}_1\) gives
\begin{equation}\label{II-Priori-ellip-4.5}
\begin{aligned}
\partial_x^4\bigg(\frac{\rho_0v_x}{\eta_x^2}\bigg)=\partial_x^3(\rho_0\partial_tv)
+\partial_x^4\bigg(\frac{\rho_0^2}{\eta_x^2}\bigg).
\end{aligned}
\end{equation}
We will estimate the \(L^2-\) norm of \(\partial_x^4\big(\frac{\rho_0v_x}{\eta_x^2}\big)\) with {suitable weight using \eqref{II-Priori-ellip-4.5}. We start with the term \(\partial_x^3(\rho_0\partial_tv)\). For this, due to \eqref{II-Priori-ellip-4}, one shall compensate a weight \(\rho_0^{1/2}\) to estimate
\begin{equation}\label{II-Priori-ellip-5}
\begin{aligned}
\|\rho_0^{1/2}\partial_x^3(\rho_0\partial_tv)\|_{L^2}^2
&\lesssim \|\rho_0^{1/2}\partial_tv\|_{L^2}^2+\|\rho_0^{1/2}\partial_tv_x\|_{L^2}^2\\
&\quad+\|\rho_0^{1/2}\partial_tv_{xx}\|_{L^2}^2
+\|\rho_0^{3/2}\partial_t\partial_x^3v\|_{L^2}^2\\
&\lesssim \|\rho_0^{1/2}\partial_tv\|_{L^2}^2+\|\rho_0^{1/2}\partial_tv_x\|_{L^2}^2\\
&\quad+\|\rho_0^{3/2}\partial_tv_{xx}\|_{L^2}^2
+\|\rho_0^{3/2}\partial_t\partial_x^3v\|_{L^2}^2\\
&\leq M_0+Ct P(\sup_{0\leq s\leq t}E^{1/2}(s,v)).
\end{aligned}
\end{equation}
Here one has used \eqref{ineq:weighted Sobolev} for \(\|\rho_0^{1/2}\partial_tv_{xx}\|_{L^2}\) in the second inequality, and \eqref{I-Priori-ellip-22} and \eqref{II-Priori-ellip-4} in the last equality.
Next, we deal with the term \(\partial_x^4\big({\frac{\rho_0^2}{\eta_x^2}}\big)\).
Direct calculations give
\begin{equation}\label{II-Priori-ellip-6}
\begin{aligned}
\bigg|\partial_x^4\bigg({\frac{\rho_0^2}{\eta_x^2}}\bigg)\bigg|
&\lesssim 1+|\eta_{xx}|+(\eta_{xx}^2+|\partial_x^3\eta|)
+\rho_0(|\eta_{xx}|^3+|\eta_{xx}\partial_x^3\eta|+|\partial_x^4\eta|)\\
&\quad+\rho_0^2(|\eta_{xx}|^4+\eta_{xx}^2|\partial_x^3\eta|+(\partial_x^3\eta)^2+|\eta_{xx}\partial_x^4\eta|+|\partial_x^5\eta|).
\end{aligned}
\end{equation}
Thus, one can get
\begin{equation}\label{II-Priori-ellip-7}
\begin{aligned}
\bigg\|\partial_x^4\bigg({\frac{\rho_0^2}{\eta_x^2}}\bigg)\bigg\|_{L^2}
&\lesssim 1+\|\eta_{xx}\|_{L^2}+(\|\eta_{xx}\|_{L^\infty}\|\eta_{xx}\|_{L^2}+\|\partial_x^3\eta\|_{L^2})\\
&\quad+(\|\eta_{xx}\|_{L^\infty}^2\|\eta_{xx}\|_{L^2}
+\|\eta_{xx}\|_{L^\infty}\|\partial_x^3\eta\|_{L^2}
+\|\rho_0\partial_x^4\eta\|_{L^2})\\
&\quad+(\|\eta_{xx}\|_{L^\infty}^3\|\eta_{xx}\|_{L^2}
+\|\eta_{xx}\|_{L^\infty}^2\|\partial_x^3\eta\|_{L^2}\\
&\quad+\|\rho_0\partial_x^3\eta\|_{L^\infty}\|\partial_x^3\eta\|_{L^2}
+\|\eta_{xx}\|_{L^2}\|\rho_0\partial_x^4\eta\|_{L^2}
+\|\rho_0^2\partial_x^5\eta\|_{L^2})\\
&\leq 1+Ct P(\sup_{0\leq s\leq t}E^{1/2}(s,v)),
\end{aligned}
\end{equation}
where in the last inequality Lemma \ref{le:Preliminary-1} and Lemma \ref{le:Preliminary-2} have been utilized.
By compensating a weight \(\rho_0^{1/2}\), we deduce from \eqref{II-Priori-ellip-4.5}, \eqref{II-Priori-ellip-5} and \eqref{II-Priori-ellip-7} that
\begin{equation}\label{II-Priori-ellip-8}
\begin{aligned}
\bigg\|\rho_0^{1/2}\partial_x^4\bigg({\frac{\rho_0v_x}{\eta_x^2}}\bigg)\bigg\|_{L^2}^2
&\leq \|\rho_0^{1/2}\partial_x^3(\rho_0\partial_tv)\|_{L^2}^2
+\bigg\|\rho_0^{1/2}\partial_x^4\bigg(\frac{\rho_0^2}{\eta_x^2}\bigg)\bigg\|_{L^2}^2\\
&\lesssim \|\rho_0^{1/2}\partial_x^3(\rho_0\partial_tv)\|_{L^2}^2
+\bigg\|\partial_x^4\bigg(\frac{\rho_0^2}{\eta_x^2}\bigg)\bigg\|_{L^2}^2\\
&\leq M_0+Ct P(\sup_{0\leq s\leq t}E^{1/2}(s,v)).
\end{aligned}
\end{equation}

We next control \(\|\rho_0^{5/2}\partial_x^5v\|_{L^2}\).
Note that
\begin{equation*}
\begin{aligned}
&\rho_0\eta_x^{-2}\partial_x^5v+4(\rho_0)_x\eta_x^{-2}\partial_x^4v\\
&=\partial_x^4\bigg(\frac{\rho_0v_x}{\eta_x^2}\bigg)-4\rho_0(\eta_x^{-2})_x\partial_x^4v-6(\rho_0\eta_x^{-2})_{xx}\partial_x^3v\\
&\quad-4\partial_x^3(\rho_0\eta_x^{-2})v_{xx}-\partial_x^4(\rho_0\eta_x^{-2})v_x=\colon \sum_{k=1}^5I_k.
\end{aligned}
\end{equation*}
The term \(I_1\) has been handled by compensating a weight \(\rho_0^{1/2}\) due to \eqref{II-Priori-ellip-8}.
It follows from Lemma \ref{le:Preliminary-1} and Lemma \ref{le:Preliminary-2} that \(I_2\) and \(I_4\) may be estimated as follows:
\begin{equation*}
\begin{aligned}
\|I_2\|_{L^2}
\leq \|\eta_{xx}\|_{L^\infty}\|\rho_0\partial_x^4v\|_{L^2}
\leq Ct P(\sup_{0\leq s\leq t}E^{1/2}(s,v)),
\end{aligned}
\end{equation*}
and
\begin{equation*}
\begin{aligned}
\|I_4\|_{L^2}
&\lesssim \|v_{xx}\|_{L^2}+
\|v_{xx}\|_{L^\infty}\big(\|\eta_{xx}\|_{L^2}
+(\|\eta_{xx}\|_{L^2}\|\eta_{xx}\|_{L^\infty}
+\|\partial_x^3\eta\|_{L^2})\\
&\quad+(\|\eta_{xx}\|_{L^2}\|\eta_{xx}\|_{L^\infty}^2
+\|\eta_{xx}\|_{L^2}\|\rho_0\partial_x^3\eta\|_{L^\infty}
+\|\rho_0\partial_x^4\eta\|_{L^2})\big)\\
&\leq [M_0+Ct P(\sup_{0\leq s\leq t}E^{1/2}(s,v))]^{1/2},
\end{aligned}
\end{equation*}
where \eqref{I-Priori-ellip-28} and \eqref{II-Priori-ellip-3.2} have been used in estimating  \(I_4\).
For \(I_3\) and \(I_5\), one can use a weight \(\rho_0\) and apply Lemma \ref{le:Preliminary-1} and Lemma \ref{le:Preliminary-2} to get
{\small \begin{equation*}
\begin{aligned}
\|\rho_0I_3\|_{L^2}
&\lesssim \|\rho_0\partial_x^3v\|_{L^2}+
\|\rho_0\partial_x^3v\|_{L^\infty}\big(\|\eta_{xx}\|_{L^2}+
(\|\eta_{xx}\|_{L^2}\|\eta_{xx}\|_{L^\infty}
+\|\partial_x^3\eta\|_{L^2})\big)\\
&\leq [M_0+Ct P(\sup_{0\leq s\leq t}E^{1/2}(s,v))]^{1/2},
\end{aligned}
\end{equation*}}
and
{\small \begin{equation*}
\begin{aligned}
\|\rho_0I_5\|_{L^2}
&\lesssim \|\rho_0v_{x}\|_{L^2}+
\|v_{x}\|_{L^\infty}\big(\|\eta_{xx}\|_{L^2}
+(\|\eta_{xx}\|_{L^2}\|\rho_0\eta_{xx}\|_{L^\infty}
+\|\rho_0\partial_x^3\eta\|_{L^2})\\
&\quad+(\|\eta_{xx}\|_{L^2}\|\eta_{xx}\|_{L^\infty}^2
+\|\eta_{xx}\|_{L^2}\|\rho_0\partial_x^3\eta\|_{L^\infty}
+\|\rho_0\partial_x^4\eta\|_{L^2})\\
&\quad+(\|\eta_{xx}\|_{L^2}\|\eta_{xx}\|_{L^\infty}^3
+\|\partial_x^3\eta\|_{L^2}\|\eta_{xx}\|_{L^\infty}^2
+\|\partial_x^3\eta\|_{L^2}\|\rho_0\partial_x^3\eta\|_{L^\infty}\\
&\quad+\|\eta_{xx}\|_{L^2}\|\rho_0^2\partial_x^4\eta\|_{L^\infty}
+\|\rho_0^2\partial_x^5\eta\|_{L^2})\big)\\
&\leq [M_0+Ct P(\sup_{0\leq s\leq t}E^{1/2}(s,v))]^{1/2},
\end{aligned}
\end{equation*}}
here  one has invoked \eqref{I-Priori-ellip-13} and \eqref{II-Priori-ellip-3.4} in estimating  \(I_3\), and  used
\begin{equation*}
\begin{aligned}
|\partial_x^4(\rho_0\eta_x^{-2})|
&\lesssim 1+|\eta_{xx}|+(\eta_{xx}^2+|\partial_x^3\eta|)
+(|\eta_{xx}|^3+|\eta_{xx}\partial_x^3\eta|+|\partial_x^4\eta|)\\
&\quad+\rho_0(|\eta_{xx}|^4+\eta_{xx}^2|\partial_x^3\eta|+\eta_{xxx}^2+|\eta_{xx}\partial_x^4\eta|+|\partial_x^5\eta|),
\end{aligned}
\end{equation*}
in estimating \(I_5\).
It follows from theses estimates and using a weight \(\rho_0\) that
\begin{equation}\label{II-Priori-ellip-9}
\begin{aligned}
\|\rho_0^2\eta_x^{-2}\partial_x^5v+4\rho_0(\rho_0)_x\eta_x^{-2}\partial_x^4v\|_{L^2}^2\leq M_0+Ct P(\sup_{0\leq s\leq t}E^{1/2}(s,v)).
\end{aligned}
\end{equation}
Then integration by parts leads to
\begin{equation}\label{II-Priori-ellip-10}
\begin{aligned}
&\|\rho_0^{5/2}\eta_x^{-2}\partial_x^5v\|_{L^2}^2\\
&\quad=\|\rho_0^{5/2}\eta_x^{-2}\partial_x^5v
+4\rho_0^{3/2}(\rho_0)_x\eta_x^{-2}\partial_x^4v\|_{L^2}^2\\
&\qquad-16\|\rho_0^{3/2}(\rho_0)_x\eta_x^{-2}\partial_x^4v\|_{L^2}^2-4\int_I\rho_0^4(\rho_0)_x\eta_x^{-4}[(\partial_x^4v)^2]_x\,\diff x\\
&\quad= \|\rho_0^{5/2}\eta_x^{-2}\partial_x^5v
+4\rho_0^{3/2}(\rho_0)_x\eta_x^{-2}\partial_x^4v\|_{L^2}^2
+4\int_I\rho_0^4[(\rho_0)_x\eta_x^{-4}]_x(\partial_x^4v)^2\,\diff x\\
&\quad\lesssim \|\rho_0^{5/2}\eta_x^{-2}\partial_x^5v
+4\rho_0^{3/2}(\rho_0)_x\eta_x^{-2}\partial_x^4v\|_{L^2}^2
+(1+\|\eta_{xx}\|_{L^\infty})\int_I\rho_0^4(\partial_x^4v)^2\,\diff x\\
&\quad\leq M_0+Ct P(\sup_{0\leq s\leq t}E^{1/2}(s,v)),
\end{aligned}
\end{equation}
where \eqref{Preliminary-5}, \eqref{I-Priori-ellip-31} and \eqref{II-Priori-ellip-9} have been used.
The inequality \eqref{II-Priori-ellip-10} and \eqref{eta-bound} yield
\begin{equation}\label{II-Priori-ellip-11}
\begin{aligned}
\|\rho_0^{5/2}\partial_x^5v\|_{L^2}^2\leq M_0+Ct P(\sup_{0\leq s\leq t}E^{1/2}(s,v)).
\end{aligned}
\end{equation}

\bigskip
\noindent{\bf{Estimate of \(\|\rho_0^2\partial_t\partial_x^4v\|_{L^2(I)}\).}} Applying \(\partial_t\partial_x^2\) to Equation \(\eqref{eq:main-2}_1\) gives
\begin{equation}\label{II-Priori-ellip-11.5}
\begin{aligned}
\partial_t\partial_x^3\bigg(\frac{\rho_0v_x}{\eta_x^2}\bigg)=(\rho_0\partial_t^2v)_{xx}
+\partial_t\partial_x^3\bigg(\frac{\rho_0^2}{\eta_x^2}\bigg).
\end{aligned}
\end{equation}
Thus, to estimate the \(L^2-\) norm of \(\partial_t\partial_x^3\big(\frac{\rho_0v_x}{\eta_x^2}\big)\), it suffices to estimate \(L^2-\) norm of \(\partial_t\partial_x^3\big({\frac{\rho_0^2}{\eta_x^2}}\big)\) and \((\rho_0\partial_t^2v)_{xx}\).
We start with \(\partial_t\partial_x^3\big({\frac{\rho_0^2}{\eta_x^2}}\big)\).
Since
\begin{equation*}
\begin{aligned}
\bigg|\partial_t\partial_x^3\bigg({\frac{\rho_0^2}{\eta_x^2}}\bigg)\bigg|
&\lesssim |v_x|+(|v_x\eta_{xx}|+|v_{xx}|)
+\rho_0(|v_x\eta_{xx}^2|+|v_x\partial_x^3\eta|\\
&\quad+|v_{xx}\eta_{xx}|+|\partial_x^3v|)
+\rho_0^2(|v_x\eta_{xx}^3|+|v_x\eta_{xx}\partial_x^3\eta|\\
&\quad+|v_x\partial_x^4\eta|+|v_{xx}\eta_{xx}^2|
+|v_{xx}\partial_x^3\eta|+|\partial_x^3v\eta_{xx}|+|\partial_x^4v|),
\end{aligned}
\end{equation*}
one gets
\begin{equation}\label{II-Priori-ellip-12}
\begin{aligned}
\bigg\|\partial_t\partial_x^3\bigg({\frac{\rho_0^2}{\eta_x^2}}\bigg)\bigg\|_{L^2}
&\lesssim \|v_x\|_{L^2}+(\|v_x\|_{L^\infty}\|\eta_{xx}\|_{L^2}+\|v_{xx}\|_{L^2})\\
&\quad+(\|v_x\|_{L^\infty}\|\eta_{xx}\|_{L^\infty}\|\eta_{xx}\|_{L^2}+\|v_x\|_{L^\infty}\|\partial_x^3\eta\|_{L^2}\\
&\quad+\|v_{xx}\|_{L^\infty}\|\eta_{xx}\|_{L^2}+
\|\rho_0\partial_x^3v\|_{L^2})\\
&\quad+(\|v_x\|_{L^\infty}\|\eta_{xx}\|_{L^\infty}^2\|\eta_{xx}\|_{L^2}+\|v_x\|_{L^\infty}\|\eta_{xx}\|_{L^\infty}\|\partial_x^3\eta\|_{L^2}\\
&\quad+\|v_x\|_{L^\infty}\|\rho_0\partial_x^4\eta\|_{L^2}
+\|v_{xx}\|_{L^\infty}\|\eta_{xx}\|_{L^\infty}\|\eta_{xx}\|_{L^2}\\
&\quad+\|v_{xx}\|_{L^\infty}\|\partial_x^3\eta\|_{L^2}+\|\eta_{xx}\|_{L^\infty}\|\partial_x^3v\|_{L^2}+\|\rho_0^2\partial_x^4v\|_{L^2})\\
&\leq [M_0+Ct P(\sup_{0\leq s\leq t}E^{1/2}(s,v))]^{1/2},
\end{aligned}
\end{equation}
where one has used \eqref{I-Priori-ellip-15}, \eqref{II-Priori-ellip-3.2}, Lemma \ref{le:Preliminary-1} and Lemma \ref{le:Preliminary-2}.

Next, we deal with \((\rho_0\partial_t^2v)_{xx}\).
Applying \(\partial_t^2\) to Equation \(\eqref{eq:main-2}_1\) yields
\begin{equation*}
\begin{aligned}
\partial_t^2\bigg(\frac{\rho_0v_x}{\eta_x^2}\bigg)_{x}=\rho_0\partial_t^3v
+\partial_t^2\bigg(\frac{\rho_0^2}{\eta_x^2}\bigg)_{x}.
\end{aligned}
\end{equation*}
Since
\begin{equation*}
\begin{aligned}
\bigg|\partial_t^2\bigg({\frac{\rho_0^2}{\eta_x^2}}\bigg)_{x}\bigg|
\lesssim \rho_0(v_x^2+|\partial_tv_x|)+\rho_0^2(v_x^2|\eta_{xx}|
+|\partial_tv_x\eta_{xx}|+|v_{x}v_{xx}|+|\partial_tv_{xx}|),
\end{aligned}
\end{equation*}
one gets
\begin{equation}\label{II-Priori-ellip-13}
\begin{aligned}
\bigg\|\partial_t^2\bigg({\frac{\rho_0^2}{\eta_x^2}}\bigg)_{x}\bigg\|_{L^2}
&\lesssim (\|\rho_0v_x\|_{L^\infty}\|v_x\|_{L^2}+\|\rho_0\partial_tv_x\|_{L^2})\\
&\quad+(\|v_x\|_{L^\infty}^2\|\eta_{xx}\|_{L^2}+\|\rho_0\partial_tv_x\|_{L^2}\|\eta_{xx}\|_{L^\infty}\\
&\quad+\|v_x\|_{L^\infty}\|\rho_0v_{xx}\|_{L^2}+\|\rho_0\partial_tv_{xx}\|_{L^2})\\
&\leq [M_0+Ct P(\sup_{0\leq s\leq t}E^{1/2}(s,v))]^{1/2},
\end{aligned}
\end{equation}
where one has used the fact that in \(\|\rho_0v_x\|_{L^\infty}\|v_x\|_{L^2}\) and \(\|v_x\|_{L^\infty}\|\rho_0v_{xx}\|_{L^2}\), each factor enjoys the same bound \([M_0+Ct P(\sup_{0\leq s\leq t}E^{1/2}(s,v))]^{1/2}\), due to \eqref{I-Priori-ellip-15}, \eqref{II-Priori-ellip-3.3} and \eqref{I-Priori-ellip-5}.
In view of \eqref{II-Priori-time-4} and \eqref{II-Priori-ellip-13}, we obtain
\begin{equation}\label{II-Priori-ellip-14}
\begin{aligned}
\bigg\|\partial_t^2\bigg(\frac{\rho_0v_x}{\eta_x^2}\bigg)_{x}\bigg\|_{L^2}^2
&\lesssim \|\rho_0\partial_t^3v\|_{L^2}^2+\bigg\|\partial_t^2\bigg({\frac{\rho_0^2}{\eta_x^2}}\bigg)_{x}\bigg\|_{L^2}^2\\
&\leq M_0+Ct P(\sup_{0\leq s\leq t}E^{1/2}(s,v)).
\end{aligned}
\end{equation}
Note that
\begin{equation*}
\begin{aligned}
\rho_0\partial_t^2v_{xx}&=\partial_t^2\bigg(\frac{\rho_0v_x}{\eta_x^2}\bigg)_{x}\eta_x^2
-\partial_t^2\bigg[\bigg(\frac{\rho_0}{\eta_x^2}\bigg)_xv_{x}\bigg]\eta_x^2\\
&\quad-\partial_t^2\bigg(\frac{\rho_0}{\eta_x^2}\bigg)v_{xx}\eta_x^2
-\partial_t\bigg(\frac{\rho_0}{\eta_x^2}\bigg)\partial_tv_{xx}\eta_x^2
=\colon\sum_{k=1}^4I_k.
\end{aligned}
\end{equation*}
The estimate on the \(L^2-\) norm of \(I_1\) follows from \eqref{II-Priori-ellip-14}.
The terms \(I_3\) and \(I_4\) can be estimated straightforwardly as follows:
\begin{equation*}
\begin{aligned}
\|I_3\|_{L^2}
&\lesssim \|\rho_0(v_x^2+\partial_tv_x)v_{xx}\|_{L^2}
\lesssim \|v_{x}\|_{L^\infty}^2\|\rho_0v_{xx}\|_{L^2}
+\|\rho_0v_{xx}\|_{L^\infty}\|\partial_tv_x\|_{L^2}\\
&\lesssim \|v_{x}\|_{L^\infty}^2\|\rho_0v_{xx}\|_{L^2}
+\|\rho_0v_{xx}\|_{L^\infty}(\|\rho_0\partial_tv_x\|_{L^2}
+\|\rho_0\partial_tv_{xx}\|_{L^2})\\
&\leq M_0+Ct P(\sup_{0\leq s\leq t}E^{1/2}(s,v)),
\end{aligned}
\end{equation*}
and
\begin{equation*}
\begin{aligned}
\|I_4\|_{L^2}
&\lesssim \|\rho_0v_x\partial_tv_{xx}\|_{L^2}
\lesssim \|v_{x}\|_{L^\infty}\|\rho_0\partial_tv_{xx}\|_{L^2}\\
&\leq M_0+Ct P(\sup_{0\leq s\leq t}E^{1/2}(s,v)),
\end{aligned}
\end{equation*}
since each factor on the RHS of \(I_3\) and \(I_4\) enjoys the same bound \([M_0+Ct P(\sup_{0\leq s\leq t}E^{1/2}(s,v))]^{1/2}\).
Here one has used \eqref{ineq:weighted Sobolev} for \(\|\partial_tv_x\|_{L^2}\) in the third inequality of \(I_3\), and bounded \(\|\rho_0v_{xx}\|_{L^\infty}\) by \eqref{II-Priori-ellip-3.2} and \eqref{II-Priori-ellip-3.4} in the forth inequality of \(I_3\).
For \(I_2\), we first calculate
{\small \begin{equation*}
\begin{aligned}
\bigg|\partial_t^2\bigg[\bigg(\frac{\rho_0}{\eta_x^2}\bigg)_xv_{x}\bigg]\bigg|
&\lesssim |v_x|\big(v_x^2+|\partial_tv_x|
+\rho_0(|\eta_{xx}|v_x^2
+|v_xv_{xx}|+|\eta_{xx}\partial_tv_x|+|\partial_tv_{xx}|)\big)\\
&\quad+|\partial_tv_x|\big(|v_x|+\rho_0(|\eta_{xx}v_x|+|v_{xx}|)\big)+|\partial_t^2v_x|(1+\rho_0|\eta_{xx}|),
\end{aligned}
\end{equation*}}
and then compensate a weight \(\rho_0\) to estimate
\begin{equation*}
\begin{aligned}
\|\rho_0I_2\|_{L^2}
&\lesssim (\|v_{x}\|_{L^\infty}^2\|v_{x}\|_{L^2}+\|v_{x}\|_{L^\infty}\|\rho_0\partial_tv_x\|_{L^2}
+\|v_{x}\|_{L^\infty}^3\|\eta_{xx}\|_{L^2}\\
&\quad+\|v_{x}\|_{L^\infty}^2\|v_{xx}\|_{L^2}
+\|v_{x}\|_{L^\infty}\|\eta_{xx}\|_{L^\infty}\|\rho_0\partial_tv_x\|_{L^2}\\
&\quad+\|v_{x}\|_{L^\infty}\|\rho_0\partial_tv_{xx}\|_{L^2})\\
&\quad+(\|v_{x}\|_{L^\infty}\|\rho_0\partial_tv_x\|_{L^2}
+\|v_{x}\|_{L^\infty}\|\eta_{xx}\|_{L^\infty}\|\rho_0\partial_tv_x\|_{L^2}\\
&\quad+\|\rho_0v_{xx}\|_{L^\infty}\|\rho_0\partial_tv_x\|_{L^2})\\
&\quad+(\|\rho_0\partial_t^2v_x\|_{L^2}
+\|\eta_{xx}\|_{L^\infty}\|\rho_0\partial_t^2v_x\|_{L^2})\\
&\leq  [M_0+Ct P(\sup_{0\leq s\leq t}E^{1/2}(s,v))]^{1/2}.
\end{aligned}
\end{equation*}
It follows from the estimates in \(I_i,\ i=1,2,3,4\) that
\begin{equation}\label{II-Priori-ellip-15}
\begin{aligned}
\|\rho_0^2\partial_t^2v_{xx}\|_{L^2}^2
\leq M_0+Ct P(\sup_{0\leq s\leq t}E^{1/2}(s,v)).
\end{aligned}
\end{equation}

Consequently, \eqref{II-Priori-ellip-11.5}, \eqref{II-Priori-ellip-12} and \eqref{II-Priori-ellip-15} yield
\begin{equation}\label{II-Priori-ellip-16}
\begin{aligned}
\bigg\|\rho_0\partial_t\partial_x^3\bigg(\frac{\rho_0v_x}{\eta_x^2}\bigg)\bigg\|_{L^2}^2
&\leq \|\rho_0(\rho_0\partial_t^2v)_{xx}\|_{L^2}^2
+\bigg\|\rho_0\partial_t\partial_x^3\bigg(\frac{\rho_0^2}{\eta_x^2}\bigg)\bigg\|_{L^2}^2\\
&\lesssim \|\rho_0\partial_t^2v\|_{L^2}^2+\|\rho_0\partial_t^2v_{x}\|_{L^2}^2
+\|\rho_0^2\partial_t^2v_{xx}\|_{L^2}^2\\
&\quad+\bigg\|\partial_t\partial_x^3\bigg(\frac{\rho_0^2}{\eta_x^2}\bigg)\bigg\|_{L^2}^2\\
&\leq M_0+Ct P(\sup_{0\leq s\leq t}E^{1/2}(s,v)).
\end{aligned}
\end{equation}

Next, we derive the weighted \(L^2\) estimate of \(\partial_x^3(\rho_0\partial_tv_x)\).
Note that
\begin{equation*}
\begin{aligned}
&\partial_x^3(\rho_0\partial_tv_x)\\
&=\partial_t\partial_x^3\bigg({\frac{\rho_0v_x}{\eta_x^2}}\bigg)\eta_x^2
+6\partial_t\bigg(\frac{\rho_0v_x}{\eta_x^2}\bigg)_{xx}\eta_{x}\eta_{xx}\\
&\quad+6\partial_t\bigg(\frac{\rho_0v_x}{\eta_x^2}\bigg)_x(\eta_{xx}^2+\eta_{x}\partial_x^3\eta)\\
&\quad+2\partial_t\bigg(\frac{\rho_0v_x}{\eta_x^2}\bigg)(3\eta_{xx}\partial_x^3\eta+\eta_{x}\partial_x^4\eta)\\
&\quad+2\partial_x^3\bigg({\frac{\rho_0v_x}{\eta_x^2}}\bigg)\eta_xv_x
+6\bigg(\frac{\rho_0v_x}{\eta_x^2}\bigg)_{xx}(v_x\eta_{xx}+\eta_{x}v_{xx})\\
&\quad+6\bigg(\frac{\rho_0v_x}{\eta_x^2}\bigg)_{x}(2v_{xx}\eta_{xx}+v_x\partial_x^3\eta+\eta_{x}\partial_x^3v)\\
&\quad+2\frac{\rho_0v_x}{\eta_x^2}(3v_{xx}\partial_x^3\eta+3v_{xxx}\eta_{xx}+v_x\partial_x^4\eta+\eta_{x}\partial_x^4v)
=\colon\sum_{k=1}^8I_k.
\end{aligned}
\end{equation*}
For the terms \(I_k\) when \(k=2,3,4,5,6\), one may get directly
\begin{equation*}
\begin{aligned}
\|I_2\|_{L^2}
\lesssim
\bigg\|\partial_t\bigg(\frac{\rho_0v_x}{\eta_x^2}\bigg)_{xx}\bigg\|_{L^2}\|\eta_{xx}\|_{L^\infty}
\leq Ct P(\sup_{0\leq s\leq t}E^{1/2}(s,v)),
\end{aligned}
\end{equation*}
\begin{equation*}
\begin{aligned}
\|I_3\|_{L^2}&\lesssim
\bigg\|\partial_t\bigg(\frac{\rho_0v_x}{\eta_x^2}\bigg)_x\bigg\|_{L^\infty}(\|\eta_{xx}\|_{L^\infty}\|\eta_{xx}\|_{L^2}
+\|\partial_x^3\eta\|_{L^2})\\
&\leq Ct P(\sup_{0\leq s\leq t}E^{1/2}(s,v)),
\end{aligned}
\end{equation*}
\begin{equation*}
\begin{aligned}
\|I_4\|_{L^2}&\lesssim
\bigg\|\partial_t\bigg(\frac{\rho_0v_x}{\eta_x^2}\bigg)\bigg\|_{L^\infty}(\|\eta_{xx}\|_{L^\infty}\|\partial_x^3\eta\|_{L^2}
+\|\partial_x^4\eta\|_{L^2})\\
&\leq Ct P(\sup_{0\leq s\leq t}E^{1/2}(s,v)),
\end{aligned}
\end{equation*}
\begin{equation*}
\begin{aligned}
\|I_5\|_{L^2}
\lesssim
\|v_{x}\|_{L^\infty}\bigg\|\partial_x^3\bigg(\frac{\rho_0v_x}{\eta_x^2}\bigg)\bigg\|_{L^2}
\leq M_0+Ct P(\sup_{0\leq s\leq t}E^{1/2}(s,v)),
\end{aligned}
\end{equation*}
\begin{equation*}
\begin{aligned}
\|I_6\|_{L^2}&\lesssim
\bigg\|\bigg(\frac{\rho_0v_x}{\eta_x^2}\bigg)_{xx}\bigg\|_{L^\infty}(\|v_{x}\|_{L^\infty}\|\eta_{xx}\|_{L^2}
+\|v_{xx}\|_{L^2})\\
&\leq M_0+Ct P(\sup_{0\leq s\leq t}E^{1/2}(s,v)).
\end{aligned}
\end{equation*}
To estimate \(I_1\), \(I_7\) and \(I_8\), we need a weight \(\rho_0\). The estimate on \(\rho_0I_1\) has been done due to \eqref{II-Priori-ellip-16}.
For \(I_7\) and \(I_8\), one can get
{\small \begin{equation*}
\begin{aligned}
\|\rho_0I_7\|_{L^2}
&\lesssim
\bigg\|\bigg(\frac{\rho_0v_x}{\eta_x^2}\bigg)_x\bigg\|_{L^\infty}(\|v_{xx}\|_{L^\infty}\|\eta_{xx}\|_{L^2}
+\|v_{x}\|_{L^\infty}\|\partial_x^3\eta\|_{L^2}+\|\rho_0\partial_x^3v\|_{L^2})\\
&\leq M_0+Ct P(\sup_{0\leq s\leq t}E^{1/2}(s,v)),
\end{aligned}
\end{equation*}}
and
\begin{equation*}
\begin{aligned}
\|\rho_0I_8\|_{L^2}
&\lesssim
\|v_x\|_{L^\infty}(\|v_{xx}\|_{L^\infty}\|\partial_x^3\eta\|_{L^2}
+\|v_{xx}\|_{L^\infty}\|\eta_{xx}\|_{L^2}\\
&\quad+\|v_{x}\|_{L^\infty}\|\rho_0\partial_x^4\eta\|_{L^2}+\|\rho_0^2\partial_x^4v\|_{L^2})\\
&\leq M_0+Ct P(\sup_{0\leq s\leq t}E^{1/2}(s,v)),
\end{aligned}
\end{equation*}
where one has used \eqref{I-Priori-ellip-31}, \eqref{II-Priori-ellip-3.4}, Lemma \ref{le:Preliminary-1} and Lemma \ref{le:Preliminary-2}.
Collecting all the cases leads to
\begin{equation}\label{II-Priori-ellip-17}
\begin{aligned}
\|\rho_0\partial_x^3(\rho_0\partial_tv_x)\|_{L^2}^2
\leq M_0+Ct P(\sup_{0\leq s\leq t}E^{1/2}(s,v)).
\end{aligned}
\end{equation}

Since
\begin{equation*}
\begin{aligned}
\rho_0\partial_t\partial_x^4v+3(\rho_0)_{x}\partial_t\partial_x^3v=
\partial_x^3(\rho_0\partial_tv_x)-\partial_x^3\rho_0\partial_tv_x-3(\rho_0)_{xx}\partial_tv_{xx},
\end{aligned}
\end{equation*}
one can get
\begin{equation}\label{II-Priori-ellip-17.2}
\begin{aligned}
&\|\rho_0^2\partial_t\partial_x^4v+3\rho_0(\rho_0)_{x}\partial_t\partial_x^3v\|_{L^2}^2\\
&\quad\lesssim \|\rho_0\partial_x^3(\rho_0\partial_tv_x)\|_{L^2}^2+\|\rho_0\partial_tv_x\|_{L^2}^2+\|\rho_0\partial_tv_{xx}\|_{L^2}^2\\
&\quad\leq M_0+Ct P(\sup_{0\leq s\leq t}E^{1/2}(s,v)),
\end{aligned}
\end{equation}
where \eqref{I-Priori-time-21}, \eqref{I-Priori-ellip-22} and \eqref{II-Priori-ellip-17} have been utilized.
Integration by parts gives
\begin{equation}\label{II-Priori-ellip-17.5}
\begin{aligned}
&\|\rho_0^2\partial_t\partial_x^4v\|_{L^2}^2\\
&\quad=\|\rho_0^2\partial_t\partial_x^4v+3\rho_0(\rho_0)_{x}\partial_t\partial_x^3v\|_{L^2}^2
-9\|\rho_0(\rho_0)_{x}\partial_t\partial_x^3v\|_{L^2}^2\\
&\qquad-3\int_I\rho_0^3(\rho_0)_x[(\partial_t\partial_x^3v)^2]_x\,\diff x\\
&\quad=\|\rho_0^2\partial_t\partial_x^4v+3\rho_0(\rho_0)_{x}\partial_t\partial_x^3v\|_{L^2}^2
+3\int_I\rho_0^3(\rho_0)_{xx}(\partial_t\partial_x^3v)^2\,\diff x\\
&\quad\leq M_0+Ct P(\sup_{0\leq s\leq t}E^{1/2}(s,v)),
\end{aligned}
\end{equation}
where one has used \eqref{II-Priori-ellip-4} and \eqref{II-Priori-ellip-17.2}.
Hence we obtain from \eqref{II-Priori-ellip-17.5} that
\begin{equation}\label{II-Priori-ellip-18}
\begin{aligned}
\|\rho_0^2\partial_t\partial_x^4v\|_{L^2}^2
\leq M_0+Ct P(\sup_{0\leq s\leq t}E^{1/2}(s,v)).
\end{aligned}
\end{equation}

\bigskip
\noindent{\bf{Estimate of \(\|\rho_0^3\partial_x^6v\|_{L^2(I)}\).}}
We first claim that
\begin{equation}\label{II-Priori-ellip-19}
\begin{aligned}
\bigg\|\rho_0\partial_x^5\bigg({\frac{\rho_0v_x}{\eta_x^2}}\bigg)\bigg\|_{L^2}^2\leq M_0+Ct P(\sup_{0\leq s\leq t}E^{1/2}(s,v)).
\end{aligned}
\end{equation}
Applying \(\partial_x^4\) to Equation \(\eqref{eq:main-2}_1\) gives
\begin{equation}\label{II-Priori-ellip-19.5}
\begin{aligned}
\partial_x^5\bigg(\frac{\rho_0v_x}{\eta_x^2}\bigg)=\partial_x^4(\rho_0\partial_tv)
+\partial_x^5\bigg(\frac{\rho_0^2}{\eta_x^2}\bigg).
\end{aligned}
\end{equation}
A direct calculation shows that
\begin{equation*}
\begin{aligned}
\bigg|\partial_x^5\bigg({\frac{\rho_0^2}{\eta_x^2}}\bigg)\bigg|
&\lesssim 1+|\eta_{xx}|+(\eta_{xx}^2+|\partial_x^3\eta|)
+(|\eta_{xx}|^3+|\eta_{xx}\partial_x^3\eta|+|\partial_x^4\eta|)\\
&\quad+\rho_0(|\eta_{xx}|^4+\eta_{xx}^2|\partial_x^3\eta|+(\partial_x^3\eta)^2+|\eta_{xx}\partial_x^4\eta|+|\partial_x^5\eta|)\\
&\quad+\rho_0^2(|\eta_{xx}|^5+|\eta_{xx}^3\partial_x^3\eta|+|\eta_{xx}|(\partial_x^3\eta)^2+\eta_{xx}^2|\partial_x^4\eta|\\
&\quad+|\partial_x^3\eta\partial_x^4\eta|+|\eta_{xx}\partial_x^5\eta|+|\partial_x^6\eta|).
\end{aligned}
\end{equation*}
Due to \eqref{Preliminary-7}, the estimate of the last term \(\rho_0^2\partial_x^6\eta\) requires a weight \(\rho_0\) ,
hence one may estimate as follows:
\begin{equation}\label{II-Priori-ellip-20}
\begin{aligned}
\bigg\|\rho_0\partial_x^5\bigg({\frac{\rho_0^2}{\eta_x^2}}\bigg)\bigg\|_{L^2}
&\lesssim 1+\|\eta_{xx}\|_{L^2}+(\|\eta_{xx}\|_{L^\infty}\|\eta_{xx}\|_{L^2}+\|\partial_x^3\eta\|_{L^2})\\
&\quad+(\|\eta_{xx}\|_{L^\infty}^2\|\eta_{xx}\|_{L^2}
+\|\eta_{xx}\|_{L^\infty}\|\partial_x^3\eta\|_{L^2}\\
&\quad+\|\rho_0\partial_x^4\eta\|_{L^2})
+(\|\eta_{xx}\|_{L^\infty}^3\|\eta_{xx}\|_{L^2}\\
&\quad+\|\eta_{xx}\|_{L^\infty}^2\|\partial_x^3\eta\|_{L^2}+\|\rho_0\partial_x^3\eta\|_{L^\infty}\|\partial_x^3\eta\|_{L^2}\\
&\quad+\|\eta_{xx}\|_{L^\infty}\|\rho_0\partial_x^4\eta\|_{L^2}+\|\rho_0^2\partial_x^5\eta\|_{L^2})\\
&\quad+(\|\eta_{xx}\|_{L^\infty}^4\|\eta_{xx}\|_{L^2}+\|\eta_{xx}\|_{L^\infty}^3\|\partial_x^3\eta\|_{L^2}\\
&\quad+\|\eta_{xx}\|_{L^\infty}\|\rho_0\partial_x^3\eta\|_{L^\infty}\|\partial_x^3\eta\|_{L^2}\\
&\quad+\|\eta_{xx}\|_{L^\infty}^2\|\rho_0\partial_x^4\eta\|_{L^2}+\|\rho_0\partial_x^3\eta\|_{L^\infty}\|\rho_0\partial_x^4\eta\|_{L^2}\\
&\quad+\|\eta_{xx}\|_{L^\infty}\|\rho_0^2\partial_x^5\eta\|_{L^2}
+\|\rho_0^3\partial_x^6\eta\|_{L^2})\\
&\leq 1+Ct P(\sup_{0\leq s\leq t}E^{1/2}(s,v)),
\end{aligned}
\end{equation}
where Lemma \ref{le:Preliminary-1} and Lemma \ref{le:Preliminary-2} have been used.
One the other hand, it holds that
\begin{equation}\label{II-Priori-ellip-21}
\begin{aligned}
\|\rho_0\partial_x^4(\rho_0\partial_tv)\|_{L^2}^2
&\lesssim \|\rho_0\partial_tv\|_{L^2}^2+\|\rho_0\partial_tv_x\|_{L^2}^2
+\|\rho_0\partial_tv_{xx}\|_{L^2}^2\\
&\quad+\|\rho_0\partial_t\partial_x^3v\|_{L^2}^2+\|\rho_0^2\partial_t\partial_x^4v\|_{L^2}^2\\
&\lesssim \|\rho_0\partial_tv\|_{L^2}^2+\|\rho_0\partial_tv_x\|_{L^2}^2
+\|\rho_0\partial_tv_{xx}\|_{L^2}^2\\
&\quad+\|\rho_0^2\partial_t\partial_x^3v\|_{L^2}^2+\|\rho_0^2\partial_t\partial_x^4v\|_{L^2}^2\\
&\leq M_0+Ct P(\sup_{0\leq s\leq t}E^{1/2}(s,v)).
\end{aligned}
\end{equation}
Here one has used \eqref{ineq:weighted Sobolev} for \(\|\rho_0\partial_t\partial_x^3v\|_{L^2}\) in the second inequality, and \eqref{II-Priori-ellip-18} in the last inequality.
Then \eqref{II-Priori-ellip-19} follows from \eqref{II-Priori-ellip-19.5}, \eqref{II-Priori-ellip-20} and \eqref{II-Priori-ellip-21}.

Next, we estimate \(\|\rho_0^3\partial_x^6v\|_{L^2}\).
To this end, one notes that
\begin{equation*}
\begin{aligned}
&\rho_0\eta_x^{-2}\partial_x^6v+5(\rho_0)_x\eta_x^{-2}\partial_x^5v\\
&=\partial_x^5\bigg(\frac{\rho_0v_x}{\eta_x^2}\bigg)-5\rho_0(\eta_x^{-2})_x\partial_x^5v-10(\rho_0\eta_x^{-2})_{xx}\partial_x^4v\\
&\quad-10\partial_x^3(\rho_0\eta_x^{-2})\partial_x^3v-5\partial_x^4(\rho_0\eta_x^{-2})v_{xx}-\partial_x^5(\rho_0\eta_x^{-2})v_{x}=\colon\sum_{k=1}^6I_k.
\end{aligned}
\end{equation*}
First, it follows from \eqref{II-Priori-ellip-19} that \(\|\rho_0I_1\|_{L^2}\) has the desired bound.
For the terms \(I_k\) when \(k=2,4,5\), in view of \eqref{I-Priori-time-16}, \eqref{I-Priori-ellip-5}, \eqref{II-Priori-ellip-3.4}, Lemma \ref{le:Preliminary-1} and Lemma \ref{le:Preliminary-2},
we may choose a weight \(\rho_0\) to estimate each term as follows:
\begin{equation*}
\begin{aligned}
\|\rho_0I_2\|_{L^2}
\lesssim \|\eta_{xx}\|_{L^\infty}\|\rho_0^2\partial_x^5v\|_{L^2}
\leq Ct P(\sup_{0\leq s\leq t}E^{1/2}(s,v)),
\end{aligned}
\end{equation*}
{\small \begin{equation*}
\begin{aligned}
\|\rho_0I_4\|_{L^2}
&\lesssim \|\rho_0\partial_x^3v\|_{L^2}+
\|\rho_0\partial_x^3v\|_{L^\infty}\big(\|\eta_{xx}\|_{L^2}
+(\|\eta_{xx}\|_{L^\infty}\|\eta_{xx}\|_{L^2}
+\|\partial_x^3\eta\|_{L^2})\\
&\quad+(\|\eta_{xx}\|_{L^\infty}^2\|\eta_{xx}\|_{L^2}
+\|\rho_0\partial_x^3\eta\|_{L^\infty}\|\eta_{xx}\|_{L^2}
+\|\rho_0\partial_x^4\eta\|_{L^2})\big)\\
&\leq [M_0+Ct P(\sup_{0\leq s\leq t}E^{1/2}(s,v))]^{1/2},
\end{aligned}
\end{equation*}}
{\small \begin{equation*}
\begin{aligned}
\|\rho_0I_5\|_{L^2}
&\lesssim \|\rho_0v_{xx}\|_{L^2}+
\|v_{xx}\|_{L^\infty}\big(\|\eta_{xx}\|_{L^2}
+(\|\rho_0\eta_{xx}\|_{L^\infty}\|\eta_{xx}\|_{L^2}
+\|\rho_0\partial_x^3\eta\|_{L^2})\\
&\quad+(\|\eta_{xx}\|_{L^\infty}^2\|\eta_{xx}\|_{L^2}
+\|\rho_0\partial_x^3\eta\|_{L^\infty}\|\eta_{xx}\|_{L^2}
+\|\rho_0\partial_x^4\eta\|_{L^2})\\
&\quad+(\|\eta_{xx}\|_{L^\infty}^3\|\eta_{xx}\|_{L^2}
+\|\eta_{xx}\|_{L^\infty}^2\|\partial_x^3\eta\|_{L^2}
+\|\rho_0\partial_x^3\eta\|_{L^\infty}\|\partial_x^3\eta\|_{L^2}\\
&\quad+\|\rho_0^2\partial_x^4\eta\|_{L^\infty}\|\eta_{xx}\|_{L^2}
+\|\rho_0^2\partial_x^5\eta\|_{L^2})\big)\\
&\leq [M_0+Ct P(\sup_{0\leq s\leq t}E^{1/2}(s,v))]^{1/2}.
\end{aligned}
\end{equation*}}

For \(I_3\) and \(I_6\), we choose a weight \(\rho_0^2\) to estimate them as follows:
\begin{equation*}
\begin{aligned}
\|\rho_0^2I_3\|_{L^2}
&\lesssim \|\rho_0^2\partial_x^4v\|_{L^2}(1+\|\eta_{xx}\|_{L^\infty}
+\|\eta_{xx}\|_{L^\infty}^2
+\|\rho_0\partial_x^3\eta\|_{L^\infty})\\
&\leq [M_0+Ct P(\sup_{0\leq s\leq t}E^{1/2}(s,v))]^{1/2},
\end{aligned}
\end{equation*}
and 
{\small \begin{equation*}
\begin{aligned}
\|\rho_0^2I_6\|_{L^2}
&\lesssim \|\rho_0v_{x}\|_{L^2}+
\|v_{x}\|_{L^\infty}\big(\|\eta_{xx}\|_{L^2}
+(\|\eta_{xx}\|_{L^\infty}\|\eta_{xx}\|_{L^2}
+\|\partial_x^3\eta\|_{L^2})\\
&\quad+(\|\eta_{xx}\|_{L^\infty}^2\|\eta_{xx}\|_{L^2}
+\|\eta_{xx}\|_{L^\infty}\|\partial_x^3\eta\|_{L^2}
+\|\rho_0\partial_x^4\eta\|_{L^2})\\
&\quad+(\|\eta_{xx}\|_{L^\infty}^3\|\eta_{xx}\|_{L^2}
+\|\eta_{xx}\|_{L^\infty}^2\|\partial_x^3\eta\|_{L^2}
+\|\rho_0\partial_x^3\eta\|_{L^\infty}\|\partial_x^3\eta\|_{L^2}\\
&\quad+\|\eta_{xx}\|_{L^\infty}\|\rho_0\partial_x^4\eta\|_{L^2}
+\|\rho_0^2\partial_x^5\eta\|_{L^2})
+(\|\eta_{xx}\|_{L^\infty}^4\|\eta_{xx}\|_{L^2}\\
&\quad+\|\eta_{xx}\|_{L^\infty}^3\|\partial_x^3\eta\|_{L^2}
+\|\eta_{xx}\|_{L^\infty}\|\rho_0\partial_x^3\eta\|_{L^\infty}\|\partial_x^3\eta\|_{L^2}\\
&\quad+\|\eta_{xx}\|_{L^\infty}^2\|\rho_0\partial_x^4\eta\|_{L^2}+\|\rho_0\partial_x^3\eta\|_{L^\infty}\|\rho_0\partial_x^4\eta\|_{L^2}\\
&\quad+\|\eta_{xx}\|_{L^\infty}\|\rho_0^2\partial_x^5\eta\|_{L^2}
+\|\rho_0^3\partial_x^6\eta\|_{L^2})\big)\\
&\leq [M_0+Ct P(\sup_{0\leq s\leq t}E^{1/2}(s,v))]^{1/2},
\end{aligned}
\end{equation*}}
where in estimating \(I_6\) one has used
{\small \begin{equation*}
\begin{aligned}
|\partial_x^5(\rho_0\eta_x^{-2})|
&\lesssim 1+|\eta_{xx}|+(\eta_{xx}^2+|\partial_x^3\eta|)
+(|\eta_{xx}|^3+|\eta_{xx}\partial_x^3\eta|+|\partial_x^4\eta|)\\
&\quad+(|\eta_{xx}|^4+\eta_{xx}^2|\partial_x^3\eta|+(\partial_x^3\eta)^2+|\eta_{xx}\partial_x^4\eta|+|\partial_x^5\eta|)\\
&\quad+\rho_0(|\eta_{xx}|^5+|\eta_{xx}^3\partial_x^3\eta|+|\eta_{xx}|(\partial_x^3\eta)^2+\eta_{xx}^2|\partial_x^4\eta|\\
&\quad+|\partial_x^3\eta\partial_x^4\eta|+|\eta_{xx}\partial_x^5\eta|+|\partial_x^6\eta|).
\end{aligned}
\end{equation*}}
Consequently,
{\small \begin{equation}\label{II-Priori-ellip-23}
\begin{aligned}
&\|\rho_0^3\eta_x^{-2}\partial_x^5v+5\rho_0^2(\rho_0)_x\eta_x^{-2}\partial_x^4v\|_{L^2}^2\\
&\quad\lesssim \|\rho_0^2I_1\|_{L^2}^2+\|\rho_0^2I_2\|_{L^2}^2+\|\rho_0^2I_3\|_{L^2}^2+\|\rho_0^2I_4\|_{L^2}^2+\|\rho_0^2I_5\|_{L^2}^2+\|\rho_0^2I_6\|_{L^2}^2\\
&\quad\lesssim \|\rho_0I_1\|_{L^2}^2+\|\rho_0I_2\|_{L^2}+\|\rho_0^2I_3\|_{L^2}^2+\|\rho_0I_4\|_{L^2}^2+\|\rho_0I_5\|_{L^2}^2+\|\rho_0^2I_6\|_{L^2}^2\\
&\quad\leq M_0+Ct P(\sup_{0\leq s\leq t}E^{1/2}(s,v)).
\end{aligned}
\end{equation}}
Then integration by parts yields
{\small \begin{equation}\label{II-Priori-ellip-24}
\begin{aligned}
&\|\rho_0^3\eta_x^{-2}\partial_x^6v\|_{L^2}^2\\
&\quad=\|\rho_0^3\eta_x^{-2}\partial_x^6v
+5\rho_0^2(\rho_0)_x\eta_x^{-2}\partial_x^5v\|_{L^2}^2
-25\|\rho_0^2(\rho_0)_x\eta_x^{-2}\partial_x^5v\|_{L^2}^2\\
&\qquad-5\int_I\rho_0^5(\rho_0)_x\eta_x^{-4}[(\partial_x^5v)^2]_x\,\diff x\\
&\quad=\|\rho_0^3\eta_x^{-2}\partial_x^6v
+5\rho_0^2(\rho_0)_x\eta_x^{-2}\partial_x^5v\|_{L^2}^2
+5\int_I\rho_0^5[(\rho_0)_x\eta_x^{-4}]_{x}(\partial_x^5v)^2\,\diff x\\
&\quad\lesssim\|\rho_0^3\eta_x^{-2}\partial_x^6v
+5\rho_0^2(\rho_0)_x\eta_x^{-2}\partial_x^5v\|_{L^2}^2
+(1+\|\eta_{xx}\|_{L^\infty})
\int_I\rho_0^5(\partial_x^5v)^2\,\diff x\\
&\quad\leq M_0+Ct P(\sup_{0\leq s\leq t}E^{1/2}(s,v)),
\end{aligned}
\end{equation}}
where \eqref{Preliminary-5}, \eqref{II-Priori-ellip-11} and \eqref{II-Priori-ellip-23} have been used.
Hence we get from \eqref{II-Priori-ellip-24} and \eqref{eta-bound} that
\begin{equation}\label{II-Priori-ellip-25}
\begin{aligned}
\|\rho_0^3\partial_x^6v\|_{L^2}^2\leq M_0+Ct P(\sup_{0\leq s\leq t}E^{1/2}(s,v)).
\end{aligned}
\end{equation}

\section{An a priori Bound}\label{A priori Bound}
Collecting all inequalities \eqref{II-Priori-time-4}-\eqref{I-Priori-time-3} and \eqref{II-Priori-time-10}-\eqref{I-Priori-time-16}  in Section \ref{Energy Estimates}, \eqref{I-Priori-ellip-5}, \eqref{I-Priori-ellip-third}, \eqref{I-Priori-ellip-22}, \eqref{I-Priori-ellip-31}, \eqref{II-Priori-ellip-4}, \eqref{II-Priori-ellip-11}, \eqref{II-Priori-ellip-18} and \eqref{II-Priori-ellip-25} in Section \ref{Elliptic estimates}, we obtain
\begin{equation}\label{I-Priori-ellip-32}
\begin{aligned}
E(t,v)
\leq M_0+CtP(\sup_{0\leq s\leq t}E^{1/2}(s,v))\quad \mathrm{for\ all}\ t\in [0,T],
\end{aligned}
\end{equation}
where \(P\) denotes a generic polynomial function of its arguments, and \(C\) is an absolutely constant only depending on \(\|\partial_x^l\rho_0\|_{L^\infty(I)}\ (l=0,1,...,5)\). The inequality
\eqref{I-Priori-ellip-32} implies for  sufficiently small \(T>0\),
\begin{equation}\label{I-Priori-ellip-33}
	\begin{aligned}
		\sup_{0\leq t\leq T}E(t,v)
		\leq 2M_0.
	\end{aligned}
\end{equation}

\section{Proof of Theorem \ref{th:main-1}: Existence}\label{Existence Part}

In this section, we will show the existence of a classical solution to the problem \eqref{eq:main-2}. For given \(T>0\), let \(\mathcal{X}_T\) be a Banach space defined by
\begin{equation*}
	\begin{aligned}
		\mathcal{X}_T=\{ v\in L^\infty([0,T]; H^3(I)):\ \sup_{0\leq t\leq T}E(t,v)<\infty\},
	\end{aligned}
\end{equation*}
endowed with its natural norm
\begin{equation*}
	\begin{aligned}
		\|v\|_{\mathcal{X}_T}^2=\sup_{0\leq t\leq T}E(t,v).
	\end{aligned}
\end{equation*}
For given \(M_1\), we define \(\mathcal{C}_T(M_1)\) to be a closed, bounded, and
convex subset of \(\mathcal{X}_T\) given by
\begin{equation}\label{solution space}
	\begin{aligned}
		\mathcal{C}_T(M_1)&=\{ v\in \mathcal{X}_T:\|v\|_{\mathcal{X}_T}^2\leq M_1
		,\ \partial_t^kv|_{t=0}=g_k
		\ \mbox{for}\  k=0,1,2,3,\\ 
          &\quad\quad\quad\quad\quad\quad\quad\quad\quad\quad  \mbox{and}\  \partial_t^kv_x|_{t=0}=h_k\ \mbox{for}\  k=0,1,2\},
	\end{aligned}
\end{equation}
where \(g_k\) and \(h_k\) are defined as follows:
\begin{equation*}
	\begin{aligned}
		&g_0=v|_{t=0}=u_0,\\
		&g_1=\partial_tv|_{t=0}=\rho_0^{-1}\bigg[\bigg(\frac{\rho_0v_x}{\eta_x^2}\bigg)_x
		-\bigg({\frac{\rho_0^2}{\eta_x^2}}\bigg)_x\bigg]\bigg|_{t=0}
		=\rho_0^{-1}[(\rho_0(u_0)_x)_x-(\rho_0^2)_x],\\
	&g_k:=\partial_t^kv|_{t=0}=\rho_0^{-1}\partial_t^{k-1}\bigg[\bigg(\frac{\rho_0v_x}{\eta_x^2}\bigg)_x
	-\bigg({\frac{\rho_0^2}{\eta_x^2}}\bigg)_x\bigg]\bigg|_{t=0}\quad \text{for}\ k=2,3,\\
&h_0:=v_x|_{t=0}=(u_0)_x,\\
& h_k:=\partial_t^kv_x|_{t=0}=(g_k)_x\quad \text{for}\  k=1,2.
	\end{aligned}
\end{equation*}
Note that each \(g_k\) (\(k=0,1,2,3\)) and \(h_k\) (\(k=1,2\)) is a function of spatial derivatives of \(\rho_0\) and \(u_0\).

For any given \(\bar{v}\in\mathcal{C}_T(M_1)\), define
\begin{align}\label{existence-1}
	\bar{\eta}(x,t)=x+\int_0^t\bar{v}(x,s)\,\diff s.
\end{align}
Arguing as for \eqref{eta-bound}, by choosing \(T>0\) suitably small, one also has
\begin{align}\label{eta-bound-2}
	1/2\leq \bar{\eta}_x(x,t)\leq 3/2,\quad (x,t)\in I\times [0,T].
\end{align}
The choice of  \(M_1\) and \(T\) is given in Subsection \ref{The a priori assumption}.
We then consider the following
linearized problem for \(v\):
\begin{equation}\label{existence-3}
	\begin{cases}
		\rho_0v_t+\big({\frac{\rho_0^2}{\bar{\eta}_x^2}}\big)_x
		=\big(\frac{\rho_0v_x}{\bar{\eta}_x^2}\big)_x &\quad \mbox{in}\ I\times (0,T],\\
		v=u_0 &\quad \mbox{on}\ I\times \{t=0\}.
	\end{cases}
\end{equation}

In order to construct classical solutions to the problem \eqref{existence-3}, we first study its weak solutions.
\subsection{Existence and uniqueness of a weak solution to the problem \eqref{existence-3}.}\label{weak solution}
 Let \(\langle\cdot, \cdot\rangle\) be the pairing of \(H^{-1}(I)\) and \(H^1(I)\), and \((\cdot,\cdot)\) stand for the inner product of \(L^2(I)\). Then we give the following definition:
\begin{definition}[Weak Solution]\label{Weak Solution} A function \(v\), satisfying
	\begin{equation*}
		\begin{aligned}
		\rho_0^{1/2}v_x\in L^2([0,T];L^2(I))\quad {\rm{and}}\quad \rho_0 v_t\in L^2([0,T];H^{-1}(I)),
		\end{aligned}
	\end{equation*}
	is said to be a weak solution to the problem \eqref{existence-3} provided\\	
	{\rm{(a)}}
	\begin{equation*}
		\begin{aligned}
			\langle\rho_0v_t, \phi\rangle+\bigg(\frac{\rho_0v_x}{\bar{\eta}_x^2}, \phi_x\bigg)=\bigg({\frac{\rho_0^2}{\bar{\eta}_x^2}}, \phi_x\bigg)
		\end{aligned}
	\end{equation*}	
	for each \(\phi\in H^1(I)\) and a.e. \(0< t\leq T\), and\\
	{\rm{(b)}} \(\|\rho_0v(t,\cdot)-\rho_0v(0, \cdot)\|_{L^2(I)}\to 0\ \text{as}\ t\to 0^+\), and $v(0,\cdot)=u_0(\cdot) \ \mbox{a.e.\ on}\ I$. 
\end{definition}

We will use the Galerkin's scheme (see \cite{MR2597943}) to construct weak solutions to  the problem \eqref{existence-3}. Set
\[\mathcal{H}(I)=\{h\in H^3(I): h_x=0 \ \text{on}\ \Gamma\}.\]
Let \(\{e_n\}_{n=1}^\infty\) be a Hilbert basis of \(\mathcal{H}(I)\), with each \(e_n\) being of class \(H^k(I)\) for any \(k\geq 1\).  Such a choice of basis indeed exists since one can take for instance
the eigenfunctions of the Laplace operator on \(I\) with the Nuewmann boundary
condition \(h_x=0\) for \(x\in \Gamma\).
Given a positive integer \(n\), we set
\begin{equation}\label{existence-4}
	\begin{aligned}
		X^n(t,x)=\sum_{i=1}^n\lambda_i^n(t)e_i(x),
	\end{aligned}
\end{equation}
in which the coefficients \(\lambda_i^n(t)\) are chosen such that
\begin{eqnarray}\label{existence-5}
	\begin{cases}
		\big(\rho_0\partial_tX^n,e_j\big)
		+\bigg(\frac{\rho_0X_x^n}{\bar{\eta}_x^2},(e_j)_x\bigg)
		=\bigg(\frac{\rho_0^2}{\bar{\eta}_x^2},(e_j)_x\bigg)
		&\quad \mbox{in}\ (0,T],\\
		\lambda_j^n=(u_0,e_j) &\quad \mbox{on}\ \{t=0\},
	\end{cases}
\end{eqnarray}
where \(j=1,2,...,n\).
Inserting \eqref{existence-4} into \eqref{existence-5} leads to
\begin{equation}\label{existence-6}
	\begin{cases}
		\sum_{i=1}^n\int_I \rho_0 e_ie_j\,\diff x\cdot  [\lambda_j^n(t)\big]_t\\
		\quad+\sum_{i=1}^n\int_I\frac{\rho_0(e_i)_x(e_j)_x}{\bar{\eta}_x^2}\,\diff x\cdot  \lambda_j^n(t)
		=\int_I \frac{\rho_0^2 (e_j)_x}{\bar{\eta}_x^2}\,\diff x	&\quad \mbox{in}\ (0,T],\\
		\lambda_j^n=(u_0,e_j) &\quad \mbox{on}\ \{t=0\},
	\end{cases}
\end{equation}
where \(j=1,2,...,n\).

It is clear that each integral in \eqref{existence-6} is well-defined since each \(e_i\) lives in \(H^k(I)\cap \mathcal{H}(I)\) for all \(k\geq 1\). On the one hand, the \(\{e_n\}_{n=1}^\infty\) are linearly independent, so are the \(\{\sqrt{\rho_0}e_n\}_{n=1}^\infty\). Hence the determinant of the matrix
\begin{align*}
	[\sqrt{\rho_0}e_i,\sqrt{\rho_0}e_j]_{i,j\in\{1,\dots,n\}}
\end{align*}
is nonzero. On the other hand, it follows from \(\bar{v}\in \mathcal{C}_T(M_1)\) and \eqref{eta-bound-2} that \(1/\bar{\eta}_x\) is continuous for \(t\in[0,T]\), which implies 
\begin{align*} 
	\int_I\frac{\rho_0(e_i)_x(e_j)_x}{\bar{\eta}_x^2}\,\diff x
\end{align*}
is continuous, and
\begin{align*} 
     \int_I \frac{\rho_0^2 (e_j)_x}{\bar{\eta}_x^2}\,\diff x
\end{align*}
is Lipschitz continuous for \(t\in[0,T]\). By the standard ODEs' theory, one can find solutions \(\lambda_i^n(t)\in C^1([0,T_n])\) \ \((i=1,...,n)\) to \eqref{existence-6}, which
means there exist approximate solutions \(X^n(t,x)\in C^1([0,T_n],\mathcal{H}(I))\)\ \((n=1,2,...)\) to \eqref{existence-5}.\\

We next show that \(\{X^n\}_{n=1}^\infty\)  satisfy some uniform estimates in \(n\geq 1\).

\begin{lemma}\label{uniform estimates} The approximate solutions \(\{X^n\}_{n=1}^\infty\) satisfy the following uniform estimates in \(n\geq 1\):
\begin{equation}\label{first uniform estimates}
	\begin{aligned}
		&\underset{t\in[0,T]}{\sup}\|\rho_0^{1/2}X^n\|_{L^2(I)}^2
		+\|\rho_0^{1/2}X_x^n\|_{L^2([0,T];L^2(I))}^2+\|\rho_0 X_t^n\|_{L^2([0,T];H^{-1}(I))}^2\\
		&\quad\leq C\|\rho_0^{1/2}u_0\|_{L^2(I)}^2+CT.
	\end{aligned}
\end{equation}
	
\end{lemma}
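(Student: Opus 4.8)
The plan is to obtain \eqref{first uniform estimates} from the standard Galerkin energy identity. I multiply the $j$-th equation of \eqref{existence-5} by $\lambda_j^n(t)$ and sum over $j=1,\dots,n$; since $X^n=\sum_i\lambda_i^n e_i$, this collapses to
$$\tfrac12\tfrac{d}{dt}\|\rho_0^{1/2}X^n\|_{L^2(I)}^2+\int_I\frac{\rho_0(X_x^n)^2}{\bar\eta_x^2}\,\diff x=\int_I\frac{\rho_0^2}{\bar\eta_x^2}X_x^n\,\diff x.$$
By \eqref{eta-bound-2} the dissipation term is bounded below by $\tfrac49\int_I\rho_0(X_x^n)^2\,\diff x$. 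On the right I split $\rho_0^2/\bar\eta_x^2=\rho_0^{3/2}\bar\eta_x^{-2}\cdot\rho_0^{1/2}$ and, using $0\le\rho_0\le C$ (a consequence of \eqref{eq:intro-3}) together with \eqref{eta-bound-2}, apply Cauchy--Schwarz and Young to bound it by $\tfrac19\int_I\rho_0(X_x^n)^2\,\diff x+C$, the constant coming from the harmless integral $\int_I\rho_0\,\diff x$. Absorbing the gradient term leaves $\tfrac{d}{dt}\|\rho_0^{1/2}X^n\|_{L^2}^2+c\int_I\rho_0(X_x^n)^2\,\diff x\le C$ for some $c>0$; integrating over $[0,t]$ with $t\le T$ produces the first two terms of \eqref{first uniform estimates}, the initial contribution $\|\rho_0^{1/2}X^n(0)\|_{L^2}^2$ being controlled by $\|\rho_0^{1/2}u_0\|_{L^2}^2$ because $X^n(0)$ is a projection of $u_0$ onto $V_n:=\mathrm{span}\{e_1,\dots,e_n\}$. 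Being uniform on $[0,T]$, this bound also upgrades the local solvability of \eqref{existence-6} to $T_n=T$.

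For the last term I argue by duality. Fixing $\phi\in H^1(I)$, linearity of \eqref{existence-5} gives $(\rho_0\partial_tX^n,w)=(\rho_0^2\bar\eta_x^{-2},w_x)-(\rho_0 X_x^n\bar\eta_x^{-2},w_x)$ for every $w\in V_n$. I take $w=\Pi_n\phi$, the projection of $\phi$ onto $V_n$ orthogonal in the weighted space $L^2(\rho_0\,\diff x)$, so that $(\rho_0\partial_tX^n,\phi-\Pi_n\phi)=0$ (as $\partial_tX^n\in V_n$) and hence $(\rho_0\partial_tX^n,\phi)=(\rho_0\partial_tX^n,\Pi_n\phi)$. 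Distributing one power of $\rho_0$ to each factor and using $\rho_0\le C$, the right-hand side is bounded by $C(1+\|\rho_0^{1/2}X_x^n\|_{L^2})\|(\Pi_n\phi)_x\|_{L^2}$. Provided $\|(\Pi_n\phi)_x\|_{L^2}\lesssim\|\phi\|_{H^1}$, taking the supremum over $\|\phi\|_{H^1}\le1$, squaring, and integrating in time lets the already-established space--time bound on $\|\rho_0^{1/2}X_x^n\|_{L^2}$ control $\int_0^T\|\rho_0\partial_tX^n\|_{H^{-1}}^2\,\diff t$ by $C\|\rho_0^{1/2}u_0\|_{L^2}^2+CT$, which completes \eqref{first uniform estimates}.

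I expect the duality step to be the main obstacle. Because the time derivative enters \eqref{existence-5} only through the degenerate weight $\rho_0$, one cannot test directly against an arbitrary $\phi\in H^1$: the identity holds only for $w\in V_n$, and the naive decomposition of $\phi$ into its \emph{unweighted} $L^2$-orthogonal projection onto $V_n$ plus a remainder fails, since the weight destroys orthogonality against $\rho_0\partial_tX^n$. This forces the weighted projection $\Pi_n$ used above, and the remaining --- and genuinely delicate --- task is to verify that $\Pi_n$ is bounded $H^1(I)\to H^1(I)$ uniformly in $n$ despite the vacuum degeneracy $\rho_0\sim d$ in \eqref{eq:intro-3}. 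Everything else reduces to the coercivity furnished by \eqref{eta-bound-2} and the boundedness $\rho_0\le C$.
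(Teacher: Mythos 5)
Your first half matches the paper's argument exactly: test \eqref{existence-5} with $X^n$ itself, use \eqref{eta-bound-2} for coercivity of the dissipation term, and use Cauchy's inequality together with the boundedness of $\rho_0$ to absorb the source term $\big(\rho_0^2\bar\eta_x^{-2},X_x^n\big)$; the extension of $T_n$ to $T$ is also handled as in the paper. No issues there.

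The genuine gap is in the $H^{-1}$ estimate. Your argument funnels entirely through the claim $\|(\Pi_n\phi)_x\|_{L^2}\lesssim\|\phi\|_{H^1}$ uniformly in $n$, where $\Pi_n$ is the $L^2(\rho_0\,\diff x)$-orthogonal projection onto $V_n=\mathrm{span}\{e_1,\dots,e_n\}$, and you never prove it --- you explicitly defer it as ``the remaining task.'' This is not a routine verification: for a degenerate weight $\rho_0\sim d(x)$ and a basis chosen as in Subsection \ref{weak solution} (eigenfunctions of the unweighted Neumann Laplacian), no standard theorem gives uniform $H^1$-boundedness of a \emph{weighted} projection, and nothing in your proposal supplies an argument. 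Since the bound on $\|\rho_0 X_t^n\|_{L^2([0,T];H^{-1}(I))}$ rests on exactly this claim, the third term of \eqref{first uniform estimates} is left unproved. For comparison, the paper avoids weighted projections altogether: it splits $\phi=\phi_1+\phi_2$ with $\phi_1\in V_n$ and $(\phi_2,e_i)=0$ in the \emph{unweighted} $L^2$ inner product, gets $\|\phi_1\|_{H^1(I)}\le\|\phi\|_{H^1(I)}$ for free because the Neumann eigenfunctions are orthogonal in $H^1(I)$ as well as in $L^2(I)$, and then tests \eqref{existence-5} with $\phi_1$. Your objection to that route --- that $L^2$-orthogonality of $\phi_2$ to $V_n$ does not annihilate $(\rho_0X_t^n,\phi_2)$, since $\rho_0X_t^n\notin V_n$ --- is a legitimate observation, but your proposal does not resolve the difficulty; it relocates it to the unproved boundedness of $\Pi_n$. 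A way to actually close your version would be to build the Galerkin basis adapted to the weight, e.g.\ eigenfunctions of $-e''=\lambda\rho_0 e$ with the Neumann condition: for such a basis, orthogonality in $L^2(\rho_0\,\diff x)$ is equivalent to $L^2$-orthogonality of the gradients, whence $\|(\Pi_n\phi)_x\|_{L^2}\le\|\phi_x\|_{L^2}$, and then both your orthogonality step and the projection bound hold simultaneously.
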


\begin{proof}
It follows from  \eqref{existence-4} and \(\eqref{existence-5}_1\) that
\begin{align*}
	\big(\rho_0\partial_tX^n,X^n\big)
	+\bigg(\frac{\rho_0\partial_xX^n}{\bar{\eta}_x^2},\partial_xX^n\bigg)
	=\bigg(\frac{\rho_0^2}{\bar{\eta}_x^2},\partial_xX^n\bigg).
\end{align*}
Integrating it over \(I\times[0,T_n]\) and integration by parts yield
\begin{equation}\label{existence-7}
	\begin{aligned}
		&\frac{1}{2}\int_I \rho_0(X^n)^2\,\diff x
		+\int_0^{T_n}\int_I\frac{\rho_0(\partial_xX^n)^2}{\bar{\eta}_x^2}\,\diff x\diff s\\
		&=\frac{1}{2}\int_I \rho_0(X^n)^2(x,0)\,\diff x
		+\int_0^{T_n}\int_I\frac{\rho_0^2\partial_xX^n}{\bar{\eta}_x^2}\,\diff x\diff s.
	\end{aligned}
\end{equation}
\eqref{eta-bound-2} and Cauchy's inequality imply
\begin{equation}\label{existence-8}
	\begin{aligned}
		\int_0^{T_n}\int_I\frac{\rho_0(\partial_xX^n)^2}{\bar{\eta}_x^2}\,\diff x\diff s
		\geq \frac{4}{9} \int_0^{T_n}\int_I\rho_0(\partial_xX^n)^2\,\diff x\diff s,
	\end{aligned}
\end{equation}
and
\begin{equation}\label{existence-9}
	\begin{aligned}
		\bigg|\int_0^{T_n}\int_I\frac{\rho_0^2\partial_xX^n}{\bar{\eta}_x^2}\,\diff x\diff s\bigg|
		\leq CT_n+\frac{1}{100}\int_0^{T_n}\int_I\rho_0(\partial_xX^n)^2\,\diff x\diff s.
	\end{aligned}
\end{equation}
Hence it follows from \eqref{existence-7}-\eqref{existence-9} that
\begin{equation}\label{existence-10}
	\begin{aligned}
		\int_I \rho_0(X^n)^2\,\diff x
		+\int_0^{T_n}\int_I\rho_0(\partial_xX^n)^2\,\diff x\diff s
		\leq C\|\rho_0^{1/2}u_0\|_{L^2(I)}^2+CT_n.
	\end{aligned}
\end{equation}

Fix any \(\phi\in H^1(I)\) with \(\|\phi\|_{H^1(I)}\leq 1\), and write \(\phi=\phi_1+\phi_2\), where 
\begin{equation*}
	\begin{aligned}
\phi_1\in \text{span}\{e_i\}_{i=1}^n\quad {\rm{and}}\quad (\phi_2,e_i)=0\ (i=1,...,n).
	\end{aligned}
\end{equation*}
Recalling that the functions \(\{e_i\}_{i=1}^n\) are orthogonal in \(H^1(I)\), one has
\begin{equation*}
	\begin{aligned}
\|\phi_1\|_{H^1(I)}\leq \|\phi\|_{H^1(I)}\leq 1.
	\end{aligned}
\end{equation*}
It follows from \(\eqref{existence-5}_1\)  that 
\begin{equation}\label{existence-11}
	\begin{aligned}
(\rho_0X_t^n, \phi_1)+\bigg(\frac{\rho_0X_x^n}{\bar{\eta}_x^2}, (\phi_1)_x\bigg)=\bigg({\frac{\rho_0^2}{\bar{\eta}_x^2}}, (\phi_1)_x\bigg),
	\end{aligned}
\end{equation}
for a.e. \(0\leq t\leq T\).
Hence \eqref{existence-11} yields 
\begin{equation*}
	\begin{aligned}
\langle\rho_0X_t^n, \phi\rangle
&=(\rho_0X_t^n, \phi)=(\rho_0X_t^n, \phi_1)\\
&=\bigg({\frac{\rho_0^2}{\bar{\eta}_x^2}}, (\phi_1)_x\bigg)-\bigg(\frac{\rho_0X_x^n}{\bar{\eta}_x^2}, (\phi_1)_x\bigg),
	\end{aligned}
\end{equation*}
which furthermore implies 
\begin{equation*}
	\begin{aligned}
		|\langle\rho_0X_t^n, \phi\rangle|&\leq C(1+\|\rho_0^{1/2}X_x^n\|_{L^2})\|\phi_1\|_{H^1(I)}\\
		&\leq C(1+\|\rho_0^{1/2}X_x^n\|_{L^2}).
	\end{aligned}
\end{equation*}
This results in 
\begin{equation*}
	\begin{aligned}
	  \|\rho_0X_t^n\|_{H^{-1}(I)}\leq C(1+\|\rho_0^{1/2}X_x^n\|_{L^2}),
	\end{aligned}
\end{equation*}
and therefore 
\begin{equation}\label{existence-12}
	\begin{aligned}
		\int_0^{T_n}\|\rho_0X_t^n\|_{H^{-1}(I)}^2\,\diff t&\leq C\int_0^{T_n}\int_I\rho_0(X_x^n)^2\,\diff x\,\diff s+CT_n\\
		&\leq C\|\rho_0^{1/2}u_0\|_{L^2(I)}^2+CT_n,
	\end{aligned}
\end{equation}
due to \eqref{existence-10}.

It follows from \eqref{existence-10} and \eqref{existence-12} that
\begin{equation}\label{existence-13}
	\begin{aligned}
		&\underset{t\in[0,T_n]}{\sup}\|\rho_0^{1/2}X^n\|_{L^2(I)}^2
		+\|\rho_0^{1/2}X_x^n\|_{L^2([0,T_n];L^2(I))}^2+\|\rho_0 X_t^n\|_{L^2([0,T_n];H^{-1}(I))}^2\\
		&\quad\leq C\|\rho_0^{1/2}u_0\|_{L^2(I)}^2+CT_n.
	\end{aligned}
\end{equation}
Note that \eqref{eta-bound-2} holds on \(I\times[0,T]\), hence \(T_n\) can reach \(T\).  Consequently \eqref{first uniform estimates} follows from \eqref{existence-13}.

\end{proof}

Finally, we show the existence of a weak solution to the problem \eqref{existence-3}. 

\begin{lemma} There exists a unique weak solution \(v\) to the problem \eqref{existence-3} with
	\begin{equation*}
		\begin{aligned}
			&\rho_0^{1/2}v\in L^\infty([0,T],L^2(I)),\ \rho_0^{1/2}v_x\in L^2([0,T],L^2(I)),\\
			&\rho_0\partial_tv\in L^2([0,T];H^{-1}(I)).
		\end{aligned}
	\end{equation*}	
	Moreover, the solution \(v\) satisfies the following estimate	
	\begin{equation}\label{solution bound}
		\begin{aligned}
			&\underset{t\in[0,T]}{\sup}\|\rho_0^{1/2}v\|_{L^2(I)}^2
			+\|\rho_0^{1/2}v_x\|_{L^2([0,T];L^2(I))}^2
			+\|\rho_0v_t\|_{L^2([0,T];H^{-1}(I))}^2\\
			&\quad\leq C|\rho_0^{1/2}u_0\|_{L^2(I)}^2+C(1+T).
		\end{aligned}
	\end{equation}	
	
\end{lemma}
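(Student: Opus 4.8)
The plan is to obtain $v$ as a weak limit of the Galerkin approximations $X^n$ constructed above, exploiting the uniform bound \eqref{first uniform estimates} from Lemma \ref{uniform estimates}, and then to establish uniqueness through a weighted energy estimate for the difference of two solutions. First, by \eqref{first uniform estimates} and the Banach--Alaoglu theorem, after passing to a subsequence (not relabeled) I would extract limits
\begin{align*}
\rho_0^{1/2}X^n&\overset{\ast}{\rightharpoonup}\chi\quad\text{in}\ L^\infty([0,T];L^2(I)),\\
\rho_0^{1/2}X_x^n&\rightharpoonup\psi\quad\text{in}\ L^2([0,T];L^2(I)),\\
\rho_0X_t^n&\rightharpoonup\tau\quad\text{in}\ L^2([0,T];H^{-1}(I)).
\end{align*}
Setting $v:=\rho_0^{-1/2}\chi$, which is well defined a.e.\ since $\rho_0>0$ in the interior of $I$, I would identify the remaining limits by testing against functions supported in compact subsets $K\Subset I$, on which $\rho_0$ is smooth and bounded below: there $X^n\rightharpoonup v$ and $X_x^n\rightharpoonup v_x$ in the sense of distributions, which forces $\psi=\rho_0^{1/2}v_x$ and, since $\rho_0$ is time independent, $\tau=\partial_t(\rho_0v)=\rho_0v_t$ a.e.\ in $I\times(0,T)$. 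To pass to the limit in $\eqref{existence-5}_1$, I would fix $m$, take $\phi\in\mathrm{span}\{e_1,\dots,e_m\}$ and $\zeta\in C_c^\infty((0,T))$, multiply by $\zeta$ and integrate in time; since the coefficient $\bar\eta_x^{-2}$ is independent of $n$ and bounded by \eqref{eta-bound-2}, each term is linear and weakly continuous in the extracted quantities, so the limit satisfies Definition \ref{Weak Solution}(a) for such $\phi$. Because the Neumann eigenfunctions $\{e_i\}$ span a dense subspace of $H^1(I)$, the identity extends to all $\phi\in H^1(I)$ and a.e.\ $t$.

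For the regularity and the initial condition, I would first note that the weighted Sobolev inequality \eqref{ineq:weighted Sobolev} (with $k=0$) gives $\|v\|_{L^2(I)}\lesssim\|\rho_0v\|_{L^2(I)}+\|\rho_0v_x\|_{L^2(I)}$, so that $v\in L^2([0,T];L^2(I))$ and, since $(\rho_0)_x\in L^\infty(I)$, also $\rho_0v\in L^2([0,T];H^1(I))$ with $\partial_t(\rho_0v)=\rho_0v_t\in L^2([0,T];H^{-1}(I))$. The Lions--Magenes continuity theorem then yields $\rho_0v\in C([0,T];L^2(I))$, which proves Definition \ref{Weak Solution}(b) once $\rho_0v(0)=\rho_0u_0$ is checked; the latter follows from the Galerkin initial data $\lambda_j^n(0)=(u_0,e_j)$ by multiplying the limiting identity by a $\zeta\in C^1([0,T])$ with $\zeta(T)=0$, integrating by parts in time, and comparing the boundary term at $t=0$, which forces $\rho_0v(0)=\rho_0u_0$ and hence $v(0)=u_0$ a.e. The estimate \eqref{solution bound} is then immediate from \eqref{first uniform estimates} by the weak (resp.\ weak-$\ast$) lower semicontinuity of the three norms.

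Finally, for uniqueness I would take two weak solutions $v_1,v_2$ and set $w=v_1-v_2$; subtracting the two weak formulations cancels the $\rho_0^2\bar\eta_x^{-2}$ term, leaving the homogeneous identity $\langle\rho_0w_t,\phi\rangle+(\rho_0\bar\eta_x^{-2}w_x,\phi_x)=0$ with vanishing initial data. The degeneracy of $\rho_0$ at $\Gamma$ is the crux here: $w$ need not lie in $H^1(I)$, so it cannot be used directly as a test function. I would circumvent this by introducing the weighted energy space $V=\{\phi:\rho_0^{1/2}\phi,\ \rho_0^{1/2}\phi_x\in L^2(I)\}$ with pivot $H=L^2_{\rho_0}(I)$; by the solution bounds $w\in L^2([0,T];V)$, and the homogeneous identity, valid for $\phi\in H^1(I)$, shows (after extending by density of $H^1(I)$ in $V$) that $\rho_0w_t$ defines an element of $L^2([0,T];V')$. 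The Lions chain rule for the Gelfand triple $V\hookrightarrow H\hookrightarrow V'$ then justifies the choice $\phi=w$ and gives
\[\frac{1}{2}\frac{\diff}{\diff t}\|\rho_0^{1/2}w\|_{L^2(I)}^2+\int_I\frac{\rho_0w_x^2}{\bar\eta_x^2}\,\diff x=0.\]
With $\rho_0^{1/2}w(0)=0$ this forces $\rho_0^{1/2}w\equiv0$, hence $w=0$ a.e.\ in $I\times[0,T]$. Throughout, the main obstacle is precisely this degeneracy: it obstructs both the identification of the weak limits (handled by the interior argument above) and, more seriously, the uniqueness estimate, which must be carried out in the weighted spaces rather than in $H^1(I)$.
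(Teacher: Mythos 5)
Your proposal is correct, and on the existence side it follows the paper's route essentially verbatim: Galerkin approximations, weak/weak-$\ast$ compactness from the uniform bound \eqref{first uniform estimates}, passage to the limit against test functions built from the basis $\{e_i\}$ (dense in $H^1(I)$), the bound \eqref{solution bound} by lower semicontinuity of norms, and recovery of the initial data by showing $\rho_0 v\in C([0,T];L^2(I))$ (via $\rho_0 v\in L^2([0,T];H^1(I))$, $\rho_0 v_t\in L^2([0,T];H^{-1}(I))$) and comparing the time-integrated formulations with $\Phi(T)=0$. Your interior-testing identification of the weak limits ($\psi=\rho_0^{1/2}v_x$, $\tau=\rho_0 v_t$ on compact subsets where $\rho_0$ is bounded below) is slightly more explicit than the paper, which simply names the limits. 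The genuine divergence is uniqueness: the paper dismisses it in one line ("easy to check since \eqref{existence-3} is a linear problem"), whereas you correctly identify the actual obstruction --- the difference $w=v_1-v_2$ need not lie in $H^1(I)$ because of the boundary degeneracy, so it is not an admissible test function --- and you resolve it by working in the Gelfand triple $V\hookrightarrow L^2_{\rho_0}(I)\hookrightarrow V'$ with Lions' chain rule, which legitimizes the energy identity $\frac{1}{2}\frac{\diff}{\diff t}\|\rho_0^{1/2}w\|_{L^2(I)}^2+\int_I\rho_0\bar{\eta}_x^{-2}w_x^2\,\diff x=0$ and hence $w\equiv 0$. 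This is a real strengthening of what the paper writes. The one ingredient you assert without proof is the density of $H^1(I)$ in $V$; this is true for the weight $\rho_0\sim d(x)$ (truncate $w$ at height $N$, then apply a logarithmic cutoff $\chi_\delta$ with $|\chi_\delta'|\lesssim (d(x)\log(1/\delta))^{-1}$ supported on $\{\delta^2<d<\delta\}$, so that $\int_I\rho_0 w^2|\chi_\delta'|^2\lesssim N^2/\log(1/\delta)\to 0$), but it is not immediate from anything quoted in the paper and deserves either this short argument or a citation to the weighted Sobolev space literature (e.g., Kufner). With that point supplied, your proof is complete and, on uniqueness, more rigorous than the paper's own.
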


\begin{proof}
It follows from  Lemma \ref{uniform estimates} that
\begin{equation*}
	\begin{aligned}
	\|\rho_0^{1/2}X_x^n\|_{L^2([0,T];L^2(I))}\quad {\rm{and}}\quad \|\rho_0 X_t^n\|_{L^2([0,T];H^{-1}(I))}
	\end{aligned}
\end{equation*}	
are uniformly bounded in \(n\geq 1\).  So there exist a subsequence of \(\{X^n\}_{n=1}^\infty\) (which is still denoted by \(\{X^n\}_{n=1}^\infty\) for convenience) and a function \(v\) satisfying \(\rho_0^{1/2}v_x \in\ L^2([0,T];L^2(I))\) and \(\rho_0v_t \in\ L^2([0,T];H^{-1}(I))\) such that as \(n\to\infty\)
\begin{equation*}
	\begin{cases}
		\rho_0^{1/2}X_x^n\rightharpoonup \rho_0^{1/2}v_x &\quad \text{in}\ L^2([0,T];L^2(I)),\\
		\rho_0X_t^n\rightharpoonup \rho_0v_t &\quad \text{in}\ L^2([0,T];H^{-1}(I)).	
	\end{cases}
\end{equation*}	
Then, the estimate \eqref{solution bound} follows easily from the energy estimates \eqref{first uniform estimates} by the lower semi-continuity of the norms.

We claim that \(v\) is a weak solution to the problem \eqref{existence-3}. Fix any positive integer \(m\geq 1\), and choose a function  \(\Phi\in C^1([0,T]; H^1(I))\)
of the form 
	\begin{equation}\label{existence-14}
	\begin{aligned}
		\Phi=\sum_{i=1}^m\mu_i(t)e_i(x),
	\end{aligned}
\end{equation}	
where \(\{\mu_i(t)\}_{i=1}^m\) are any given smooth functions. Choosing \(n\geq m\), multiplying \(\eqref{existence-5}_1\) by \(\mu_i(t)\), summing up for \(i=1,...,m\), and integrating with respect to \(t\) over \([0,T]\), we get
\begin{equation}\label{existence-15}
	\begin{aligned}
		\int_0^T\langle\rho_0X_t^n, \Phi\rangle+\bigg(\frac{\rho_0X_x^n}{\bar{\eta}_x^2}, \Phi_x\bigg)\,\diff t=\int_0^T\bigg({\frac{\rho_0^2}{\bar{\eta}_x^2}}, \Phi_x\bigg)\,\diff t.
	\end{aligned}
\end{equation}
Taking the limit \(n\rightarrow\infty\) yields
\begin{equation}\label{existence-16}
	\begin{aligned}
		\int_0^T\langle\rho_0v_t, \Phi\rangle+\bigg(\frac{\rho_0v_x}{\bar{\eta}_x^2}, \Phi_x\bigg)\,\diff t=\int_0^T\bigg({\frac{\rho_0^2}{\bar{\eta}_x^2}}, \Phi_x\bigg)\,\diff t.
	\end{aligned}
\end{equation}

Since functions of the form \eqref{existence-14} are dense in \(C([0,T]; H^1(I))\), 
\eqref{existence-16} holds for all  \(\Phi\in C^1([0,T]; H^1(I))\).	
In particular, it holds that
\begin{equation}\label{existence-weak solution}
	\begin{aligned}
\langle\rho_0v_t, \phi\rangle+\bigg(\frac{\rho_0v_x}{\bar{\eta}_x^2}, \phi_x\bigg)=\bigg({\frac{\rho_0^2}{\bar{\eta}_x^2}}, \phi_x\bigg)
	\end{aligned}
\end{equation}
for each \(\phi\in H^1(I)\) and a.e. \(0< t\leq T\).  

By Definition \ref{Weak Solution}, it remains to check that 
\begin{equation}\label{initial-add-1}
	\begin{aligned}
\|\rho_0v(t,\cdot)-\rho_0v(0, \cdot)\|_{L^2(I)}\to 0\quad \text{as}\ t\to 0^+,
	\end{aligned}
\end{equation}
and
\begin{equation}\label{initial-add-2}
	\begin{aligned}
v(0):=v(0, \cdot)=u_0(0, \cdot) \quad \text{a.e.\ in}\ I.
	\end{aligned}
\end{equation}
First note that
\begin{equation*}
		\begin{aligned}
			&\|\rho_0v\|_{L^2([0,T],H^1(I))}^2\\
&\quad\lesssim \|\rho_0^{1/2}v\|_{L^2([0,T],L^2(I))}^2+\|\rho_0^{1/2}v_x\|_{L^2([0,T],L^2(I))}^2
                                                +\|v\|_{L^2([0,T],L^2(I))}^2\\
                                 &\quad\lesssim \|\rho_0^{1/2}v\|_{L^2([0,T],L^2(I))}^2+\|\rho_0^{1/2}v_x\|_{L^2([0,T],L^2(I))}^2
+\|v\|_{L^2([0,T],H^{1/2}(I))}^2\\
                                 &\quad\lesssim \|\rho_0^{1/2}v\|_{L^2([0,T],L^2(I))}^2+\|\rho_0^{1/2}v_x\|_{L^2([0,T],L^2(I))}^2\\
                                 &\quad\lesssim \|\rho_0^{1/2}v\|_{L^\infty([0,T],L^2(I))}^2+\|\rho_0^{1/2}v_x\|_{L^2([0,T],L^2(I))}^2,
		\end{aligned}
	\end{equation*}	
where \eqref{ineq:weighted Sobolev-0} has been used in the third inequality. Hence
\begin{equation*}
	\begin{aligned} 
\rho_0v\in L^2([0,T],H^1(I)),
	\end{aligned}
\end{equation*}
which together with \(\rho_0\partial_tv\in L^2([0,T];H^{-1}(I))\) yields 
\begin{equation}\label{initial-add-3}
	\begin{aligned} 
\rho_0v\in C([0,T],L^2(I)).
	\end{aligned}
\end{equation}
Thus \eqref{initial-add-1} follows. 
Then one may deduce from \eqref{existence-16} and \eqref{initial-add-3} that  
\begin{equation}\label{existence-17}
	\begin{aligned}
		\int_0^T-\langle\Phi_t, \rho_0v\rangle+\bigg(\frac{\rho_0v_x}{\bar{\eta}_x^2}, \Phi_x\bigg)\,\diff t=\int_0^T\bigg({\frac{\rho_0^2}{\bar{\eta}_x^2}}, \Phi_x\bigg)\,\diff t+(\rho_0v(0), \Phi(0))
	\end{aligned}
\end{equation}
for each \(\Phi\in C^1([0,T]; H^1(I))\) with \(\Phi(T)=0\). For this \(\Phi\), it follows from \eqref{existence-15} that 
\begin{equation}\label{existence-18}
	\begin{aligned}
		\int_0^T-\langle\Phi_t, \rho_0X^n\rangle+\bigg(\frac{\rho_0X_x^n}{\bar{\eta}_x^2}, \Phi_x\bigg)\,\diff t=\int_0^T\bigg({\frac{\rho_0^2}{\bar{\eta}_x^2}}, \Phi_x\bigg)\,\diff t+(\rho_0X^n(0), \Phi(0)).
	\end{aligned}
\end{equation}
Passing limits in \(n\to \infty\) in \eqref{existence-18} gives 
\begin{equation}\label{existence-19}
	\begin{aligned}
		\int_0^T-\langle\Phi_t, \rho_0v\rangle+\bigg(\frac{\rho_0v_x}{\bar{\eta}_x^2}, \Phi_x\bigg)\,\diff t=\int_0^T\bigg({\frac{\rho_0^2}{\bar{\eta}_x^2}}, \Phi_x\bigg)\,\diff t+(\rho_0u_0, \Phi(0)),
	\end{aligned}
\end{equation}
where one has used the fact \(\|\rho_0^{1/2}X^n(0)-\rho_0^{1/2}u_0\|_{L^2(I)}\to 0\) as \(n\to\infty\). As \(\Phi(0)\) is arbitrary, comparing \eqref{existence-17} and \eqref{existence-19}, one gets 
\begin{equation*}
	\begin{aligned}
		\|\rho_0v(0)-\rho_0u_0\|_{L^2(I)}=0,
	\end{aligned}
\end{equation*}
which yields
\begin{equation*}
	\begin{aligned}
		\rho_0v(0)=\rho_0u_0\quad a.e.\ \text{in}\ I.
	\end{aligned}
\end{equation*}
Hence \eqref{initial-add-2} follows due to \eqref{eq:intro-3}.

The uniqueness of weak solutions of the problem \eqref{existence-3} is easy to check since \eqref{existence-3} is a linear problem.

\end{proof}

\subsection{Regularity.}\label{Regularity}
We have the following regularity result:
\begin{lemma}\label{Regularity lemma} The weak solution \(v\) to the problem \eqref{existence-3} has the following regularity: 
	\begin{equation}\label{existence-20}
		\begin{aligned}
			\sup_{0\leq t\leq T}E(t,v)
			\leq M_1.
		\end{aligned}
	\end{equation}
Consequently the solution map \(\bar{v}\mapsto v:\mathcal{C}_T(M_1)\rightarrow \mathcal{C}_T(M_1)\) is well-defined.

\end{lemma}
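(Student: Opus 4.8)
The plan is to establish \eqref{existence-20} by running the energy estimates of Section \ref{Energy Estimates} and the elliptic estimates of Section \ref{Elliptic estimates} on the \emph{linear} problem \eqref{existence-3}, and then to close the resulting inequality for small $T$. The decisive structural point is that in \eqref{existence-3} the coefficient $\bar\eta_x$ is built from the \emph{given} $\bar v\in\mathcal{C}_T(M_1)$ through \eqref{existence-1}, so that every quantity involving $\bar\eta$ and its spatial and temporal derivatives is controlled by powers of $M_1$ via the analogues of Lemma \ref{le:Preliminary-1} and Lemma \ref{le:Preliminary-2} applied to $(\bar v,\bar\eta)$, together with \eqref{eta-bound-2}. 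The unknown $v$ enters only linearly: after applying $\partial_t^k$ (for the tangential estimates) or the algebraic elliptic recovery (for the normal estimates) and integrating by parts, the top-order term in $v$ is absorbed into the dissipation $\int_0^t\!\int_I \rho_0(\partial_t^k v_x)^2/\bar\eta_x^2\,\diff x\diff s$ (bounded below by $\tfrac{4}{9}\int_0^t\!\int_I\rho_0(\partial_t^k v_x)^2\,\diff x\diff s$ by \eqref{eta-bound-2}), while all remaining terms are products of a $\bar v$-factor bounded by $P(M_1)$ and a lower-order $v$-factor. Consequently the whole scheme of Section \ref{Energy Estimates}--\ref{Elliptic estimates} reproduces, verbatim up to the replacement $\eta\mapsto\bar\eta$, the bound
\begin{equation*}
E(t,v)\le M_0+Ct\,P\!\left(M_1,\ \sup_{0\le s\le t}E^{1/2}(s,v)\right),\qquad t\in[0,T].
\end{equation*}

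To make these \emph{a priori} estimates rigorous for the weak solution, which a priori enjoys only the low regularity of \eqref{solution bound}, I would carry out the tangential (time-differentiated) estimates at the Galerkin level and the normal (elliptic) estimates on the limit. Since each $e_n$ lies in $H^k(I)$ for all $k$, every $X^n$ is smooth in $x$, and differentiating \eqref{existence-5} in $t$ (the mass matrix $(\rho_0 e_i,e_j)$ being time-independent and invertible) shows $X^n$ is as smooth in $t$ as the $\bar\eta$-coefficients allow; the initial values $\partial_t^k X^n(0)$ are the $\rho_0$-weighted projections of $g_k,h_k$ and are thus bounded by $M_0$. Testing the $\partial_t^k$-differentiated Galerkin identity against $\partial_t^k X^n\in\mathrm{span}\{e_j\}$ yields the bounds on $\|\sqrt{\rho_0}\partial_t^k X^n\|_{L^2}$ and $\|\sqrt{\rho_0}\partial_t^k X^n_x\|_{L^2}$ uniformly in $n$. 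Passing to the limit as in the previous lemma, $v$ then satisfies \eqref{existence-3} almost everywhere with $\rho_0 v_t$ and $(\rho_0 v_x/\bar\eta_x^2)_x$ well defined, and I would bootstrap the spatial regularity by performing the elliptic estimates of Section \ref{Elliptic estimates} directly on $v$, recovering successively $\rho_0 v_{xx},\ \rho_0^{3/2}\partial_x^3 v,\dots,\rho_0^{3}\partial_x^6 v$ and their mixed time--space analogues.

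With the displayed inequality in hand, I would close it exactly as in \eqref{I-Priori-ellip-32}--\eqref{I-Priori-ellip-33}: since $E(0,v)\le M_0<M_1=2M_0$ and $t\mapsto E(t,v)$ is continuous, a standard continuity (bootstrap) argument shows that, for $T$ so small that $CT\,P(M_1,M_1^{1/2})\le M_0$, the right-hand side stays below $2M_0$, whence $\sup_{0\le t\le T}E(t,v)\le 2M_0=M_1$, which is \eqref{existence-20}. Finally, the convergence of $\partial_t^k X^n(0)$ together with the equation force $\partial_t^k v|_{t=0}=g_k$ $(k=0,1,2,3)$ and $\partial_t^k v_x|_{t=0}=h_k$ $(k=0,1,2)$, so $v\in\mathcal{C}_T(M_1)$ and the map $\bar v\mapsto v$ indeed sends $\mathcal{C}_T(M_1)$ into itself.

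The hard part will be the regularity bootstrap rather than the bookkeeping of the estimates: the tangential estimates are obtained painlessly by testing at the Galerkin level, but the elliptic estimates require the \emph{pointwise} equation and therefore cannot be applied to $X^n$, which solves only the projected identity \eqref{existence-5}. The delicate point is thus to secure, in the limit, enough time regularity of $v$ (so that $\rho_0 v_t$ and its spatial derivatives are legitimate) to drive the degenerate-elliptic recovery of the weighted normal derivatives, matching exactly the weights appearing in $E(t,v)$; a secondary technical nuisance is the consistency and $M_0$-control of the higher initial data $\partial_t^k X^n(0)$ needed to start the time-differentiated estimates.
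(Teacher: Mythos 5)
Your overall strategy (exploit linearity so that every $\bar\eta$-coefficient is controlled by $P(M_1)$, rerun Sections \ref{Energy Estimates}--\ref{Elliptic estimates} with $\eta\mapsto\bar\eta$, and close for small $T$) matches the paper's, but your rigor program contains a genuine gap: the time-differentiated estimates can \emph{not} all be obtained ``painlessly'' at the Galerkin level and then followed by elliptic estimates on the limit. The top-order tangential estimate breaks this decoupling. When you apply $\partial_t^3$ and test with $\partial_t^3 v$, the commutator produces the term $\int_0^t\!\int_I\rho_0\,v_x\,\partial_t^2\bar v_x\,\partial_t^3v_x\,\diff x\diff s$ (cf. \eqref{existence-add-28}), and the only admissible distribution of norms puts the $L^\infty$ on $v_x$: the alternative $\|\partial_t^2\bar v_x\|_{L^\infty}$ is \emph{not} controlled by $E(\cdot,\bar v)$, since the functional \eqref{higher-order energy function} contains weighted mixed derivatives $\partial_t\partial_x^k v$ only for one time derivative and no weighted $\partial_t^2 v_{xx}$ term, and $L^4$--$L^4$ splittings fail for the same reason (weighted $H^{1/2}$ control of either factor costs one more spatial derivative than the tangential quantities supply). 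Thus one needs $\|v_x\|_{L^\infty}\le[M_0+CtP]^{1/2}$ for the \emph{unknown}, which comes exactly from the elliptic bounds \eqref{existence-add-8}, \eqref{existence-add-12}, \eqref{existence-add-23}, \eqref{existence-add-24} --- estimates that, as you yourself observe, cannot be performed on $X^n$ because it solves only the projected identity \eqref{existence-5}. So tangential and normal estimates must be interleaved; they cannot be layered in the order you propose.

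This is how the paper's proof is organized: only the $\partial_t$-estimate and the $v_x$-estimate are done at the Galerkin level; these give $\rho_0 v_t\in L^\infty([0,T];L^2(I))$, which upgrades the weak formulation to the pointwise a.e. equation \eqref{existence-add-10} via interior regularity; thereafter \emph{all} higher estimates, tangential and elliptic alike, are run directly on $v$ in the order $\|\rho_0 v_{xx}\|$, then $(\partial_t^2,\ \partial_t v_x)$, then $(\rho_0^{3/2}\partial_x^3v,\ \rho_0\partial_tv_{xx},\ \rho_0^2\partial_x^4v)$, then $(\partial_t^3,\ \partial_t^2v_x)$, then the remaining weighted norms. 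Two further consequences favor this route over yours. First, it removes most of your ``secondary technical nuisance'': the compatibility data $\partial_t^kX^n(0)$ for $k=2,3$ are never needed, since the higher time-differentiated estimates are performed on $v$, whose initial values $g_k,h_k$ are prescribed in \eqref{solution space}; note also that your description of $\partial_t^kX^n(0)$ as ``weighted projections of $g_k,h_k$'' is not accurate, because $X^n(0)$ is itself only a projection of $u_0$, so uniform $M_0$-bounds for these quantities would require uniform boundedness of the Galerkin projections in weighted norms --- a real issue, not a nuisance. Second, by substituting previously established bounds for every $v$-factor as one goes, the paper's final inequality \eqref{existence-add-0} has only $\sup_{s}E^{1/2}(s,\bar v)\le M_1^{1/2}$ on the right, so the smallness of $T$ closes the estimate immediately; your version, with $\sup_s E^{1/2}(s,v)$ on the right, additionally requires justifying continuity of $t\mapsto E(t,v)$ for the bootstrap, which is avoidable here precisely because the problem is linear in $v$.
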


\begin{proof} To prove \eqref{existence-20}, it suffices to show
\begin{equation}\label{existence-add-0}
\begin{aligned}
E(t,v)
\leq M_0+CtP(\sup_{0\leq s\leq t}E^{1/2}(s,\bar{v}))\quad \mathrm{for\ all}\ t\in [0,T]
\end{aligned}
\end{equation}
whose proof is similar to that of \eqref{I-Priori-ellip-32} in Section \ref{Energy Estimates} and Section \ref{Elliptic estimates}. So we only sketch the proof of \eqref{existence-add-0} and point out the main modifications. 

\noindent{\bf{Estimate of \(\|\sqrt{\rho_0}\partial_tv\|_{L^2(I)}\).}} We start with estimating \(\|\sqrt{\rho_0}\partial_tv\|_{L^2(I)}\) based on \eqref{first uniform estimates} by some basic energy estimates.
To this end,
one can apply \(\partial_t\) to \(\eqref{existence-5}_1\), multiply it by \(\partial_t\lambda_i^n(t)\), and sum \(j=1,2,...,n\), to obtain that
\begin{equation*}
		\big(\rho_0\partial_t^2X^n,\partial_tX^n\big)
		+\bigg(\partial_t\bigg(\frac{\rho_0X_x^n}{\bar{\eta}_x^2}\bigg),\partial_tX_x^n\bigg)
		=\bigg(\partial_t\bigg(\frac{\rho_0^2}{\bar{\eta}_x^2}\bigg),\partial_tX_x^n\bigg),
\end{equation*}
which gives
\begin{equation}\label{existence-add-1}
\begin{aligned}
&\frac{1}{2}\int_I \rho_0(\partial_tX^n)^2\,\diff x
+\int_0^t\int_I\frac{\rho_0(\partial_tX^n_x)^2}{\bar{\eta}_x^2}\,\diff x\diff s\\
&\quad=\frac{1}{2}\int_I \rho_0(\partial_tX^n)^2(x,0)\,\diff x+\int_0^t\int_I\partial_t\bigg({\frac{\rho_0^2}{\bar{\eta}_x^2}}\bigg)\partial_tX^n_x\,\diff x\diff s\\
&\qquad-\int_0^t\int_I\bigg[\partial_t\bigg(\frac{\rho_0X^n_x}{\bar{\eta}_x^2}\bigg)\partial_tX^n_x
-\frac{\rho_0(\partial_tX^n_x)^2}{\bar{\eta}_x^2}\bigg]\,\diff x\diff s.
\end{aligned}
\end{equation}
Then one uses Cauchy's inequality to obtain
\begin{equation}\label{existence-add-2}
\begin{aligned}
&\bigg|\int_0^t\int_I\partial_t\bigg({\frac{\rho_0^2}{\bar{\eta}_x^2}}\bigg)\partial_tX^n_x\,\diff x\diff s\bigg|\lesssim \int_0^t\int_I\rho_0^2|\bar{v}_x\partial_tX^n_x|\,\diff x\diff s\\
&\quad\leq \frac{1}{100}\int_0^t\int_I\rho_0(\partial_tX^n_x)^2\,\diff x\diff s
+C\int_0^t\int_I\rho_0\bar{v}_x^2\,\diff x\diff s\\
&\quad\leq \frac{1}{100}\int_0^t\int_I\rho_0(\partial_tX^n_x)^2\,\diff x\diff s+Ct P(\sup_{0\leq s\leq t}E^{1/2}(s,\bar{v})),
\end{aligned}
\end{equation}
and
\begin{equation}\label{existence-add-3}
\begin{aligned}
&\bigg|\int_0^t\int_I\bigg[\partial_t\bigg(\frac{\rho_0X^n_x}{\bar{\eta}_x^2}\bigg)\partial_tX^n_x
-\frac{\rho_0(\partial_tX^n_x)^2}{\bar{\eta}_x^2}\bigg]\,\diff x\diff s\bigg|\\
&\quad\lesssim \int_0^t\int_I\rho_0|\bar{v}_xX^n_x\partial_tX^n_x|\,\diff x\diff s\\
&\quad\leq \frac{1}{100}\int_0^t\int_I\rho_0(\partial_tX^n_x)^2\,\diff x\diff s
+C\sup_{0\leq s\leq t}\|\bar{v}_x\|_{L^\infty}^2\int_0^t\int_I\rho_0(X^n_x)^2\,\diff x\diff s\\
&\quad\leq \frac{1}{100}\int_0^t\int_I\rho_0(\partial_tX^n_x)^2\,\diff x\diff s+Ct P(\sup_{0\leq s\leq t}E^{1/2}(s,\bar{v})),
\end{aligned}
\end{equation}
where \eqref{first uniform estimates} has been used in the last inequality.

It follows from \eqref{existence-add-1}-\eqref{existence-add-3} that
\begin{equation}\label{existence-add-4}
\begin{aligned}
\int_I \rho_0(\partial_tX^n)^2\,\diff x
+\int_0^t\int_I\rho_0(\partial_tX^n_x)^2\,\diff x\diff s
\leq M_0+Ct P(\sup_{0\leq s\leq t}E^{1/2}(s,\bar{v})).
\end{aligned}
\end{equation}
By the lower semi-continuity of the norms, it follows from \eqref{existence-add-4} by taking limit \(n\to\infty\) that 
\begin{equation}\label{existence-add-5}
\begin{aligned}
\int_I \rho_0(\partial_tv)^2\,\diff x
+\int_0^t\int_I\rho_0(\partial_tv_x)^2\,\diff x\diff s
\leq M_0+Ct P(\sup_{0\leq s\leq t}E^{1/2}(s,\bar{v})).
\end{aligned}
\end{equation}

\bigskip
\noindent{\bf{Estimate of \(\|\sqrt{\rho_0}v_x\|_{L^2(I)}\).}} Next, we estimate \(\|\sqrt{\rho_0}v_x\|_{L^2(I)}\) using \eqref{existence-add-4}.
Multiplying Equation \(\eqref{existence-5}_1\) by \(\partial_t\lambda_i^n(t)\), and summing \(j=1,2,...,n\), one obtains
\begin{equation*}
		\big(\rho_0\partial_tX^n,\partial_tX^n\big)
		+\bigg(\frac{\rho_0X_x^n}{\bar{\eta}_x^2},\partial_tX_x^n\bigg)
		=\bigg(\frac{\rho_0^2}{\bar{\eta}_x^2},\partial_tX_x^n\bigg),
\end{equation*}
which yields 
\begin{equation}\label{existence-add-6}
\begin{aligned}
&\int_0^t\int_I \rho_0(\partial_tX^n)^2\,\diff x\diff s
+\frac{1}{2}\int_I\frac{\rho_0(X_x^n)^2}{\bar{\eta}_x^2}\,\diff x\\
&\quad=\frac{1}{2}\int_I\rho_0(X_x^n)^2(x,0)\,\diff x+\int_0^t\int_I{\frac{\rho_0^2\partial_tX_x^n}{\bar{\eta}_x^2}}\,\diff x\diff s
+\int_0^t\int_I\frac{\rho_0(X_x^n)^2\bar{v}_x}{\bar{\eta}_x^3}\,\diff x\diff s. 
\end{aligned}
\end{equation}
\eqref{first uniform estimates}, \eqref{existence-add-4} and Cauchy's inequality imply
\begin{equation}\label{existence-add-6.1}
	\begin{aligned}
		\bigg|\int_0^t\int_I\frac{\rho_0^2\partial_tX_x^n}{\bar{\eta}_x^2}\,\diff x\diff s\bigg|
		&\lesssim t+\int_0^t\int_I\rho_0(\partial_tX_x^n)^2\,\diff x\diff s\\
&\leq M_0+Ct P(\sup_{0\leq s\leq t}E^{1/2}(s,\bar{v})),
	\end{aligned}
\end{equation}
and
\begin{equation}\label{existence-add-6.2}
	\begin{aligned}
		\bigg|\int_0^t\int_I\frac{\rho_0(X_x^n)^2\bar{v}_x}{\bar{\eta}_x^3}\,\diff x\diff s\bigg|
		&\lesssim \int_0^t\int_I\rho_0\bar{v}_x^2\,\diff x\diff s+\int_0^t\int_I\rho_0(X_x^n)^2\,\diff x\diff s\\
&\leq M_0+Ct P(\sup_{0\leq s\leq t}E^{1/2}(s,\bar{v})).
	\end{aligned}
\end{equation}

It follows from  \eqref{existence-add-6}-\eqref{existence-add-6.2} that
\begin{equation}\label{existence-add-7}
\begin{aligned}
\int_0^t\int_I \rho_0(\partial_tX^n)^2\,\diff x\diff s
+\int_I\rho_0(X_x^n)^2\,\diff x
\leq M_0+Ct P(\sup_{0\leq s\leq t}E^{1/2}(s,\bar{v})).
\end{aligned}
\end{equation}
By the lower semi-continuity of the norms again, one gets from \eqref{existence-add-7} by taking limit \(n\to\infty\) that 
\begin{equation}\label{existence-add-8}
\begin{aligned}
\int_0^t\int_I \rho_0(\partial_tv)^2\,\diff x\diff s
+\int_I\rho_0v_x^2\,\diff x
\leq M_0+Ct P(\sup_{0\leq s\leq t}E^{1/2}(s,\bar{v})).
\end{aligned}
\end{equation}

\bigskip
\noindent{\bf{Estimate of \(\|\rho_0v_{xx}\|_{L^2(I)}\).}} Now, we estimate \(\|\rho_0v_{xx}\|_{L^2(I)}\) based on \eqref{existence-add-5} and \eqref{existence-add-8} by carrying out some elliptic estimates.
We start with the following equality:  
\begin{equation}\label{existence-add-9}
	\begin{aligned}
(\rho_0v_t, \phi)+\bigg(\frac{\rho_0v_x}{\bar{\eta}_x^2}, \phi_x\bigg)=\bigg({\frac{\rho_0^2}{\bar{\eta}_x^2}}, \phi_x\bigg)
	\end{aligned}
\end{equation}
for each \(\phi\in H^1(I)\) and a.e. \(0< t\leq T\), which follows from \eqref{existence-weak solution} and \eqref{existence-add-5}. Indeed, \eqref{existence-add-5} implies \(\rho_0^{1/2}v_t\in L^\infty([0,T],L^2(I))\), and thus \(\rho_0v_t\in L^\infty([0,T],L^2(I))\), which leads to 
\[\langle\rho_0v_t, \phi\rangle=(\rho_0v_t, \phi).\]
Since \(\rho_0\) satisfies the assumption \eqref{eq:intro-3},  one can obtain the interior \(H^2(I)\)-regularity \(v\in H^2_{\text{loc}}(I)\) from \eqref{existence-add-9} by a standard argument (see \cite{MR2597943}). Hence 
\begin{equation}\label{existence-add-10}
	\begin{aligned}
		\rho_0v_t+\bigg({\frac{\rho_0^2}{\bar{\eta}_x^2}}\bigg)_x
		=\bigg(\frac{\rho_0v_x}{\bar{\eta}_x^2}\bigg)_x &\quad \mbox{a.e.\ in}\ I\times (0,T].
	\end{aligned}
\end{equation}
Now one can repeat the argument in estimating \eqref{I-Priori-ellip-5} from Equation \eqref{existence-add-10} to obtain the boundary regularity  
\begin{equation}\label{existence-add-12}
\begin{aligned}
\|\rho_0v_{xx}\|_{L^2(I)}^2\leq M_0+Ct P(\sup_{0\leq s\leq t}E^{1/2}(s,\bar{v})). 
\end{aligned}
\end{equation}
Indeed, it is easy to check that the only key estimate in this assignment is \eqref{I-Priori-ellip-2.4}{\color{red}{,}} which should be replaced by
\begin{equation*}
\begin{aligned}
\|\rho_0(\bar{\eta}_x^{-2})_{x}v_x\|_{L^2}
\lesssim \|\rho_0^{1/2}v_x\|_{L^2}\|\bar{\eta}_{xx}\|_{L^\infty}
\leq Ct P(\sup_{0\leq s\leq t}E^{1/2}(s,\bar{v})),
\end{aligned}
\end{equation*}
where \eqref{existence-add-8} has been used.

\bigskip
In the following, making using \eqref{existence-add-10}, we can show that the remaining terms in \(E(t,v)\) have the desired bound \(M_0+Ct P(\sup_{0\leq s\leq t}E^{1/2}(s,\bar{v}))\).\\
\noindent{\bf{Estimate of \(\|\sqrt{\rho_0}\partial_t^2v\|_{L^2(I)}\).}} Applying \(\partial_t^2\) to \eqref{existence-add-10} and multiplying it by \(\partial_t^2v\) yield 
\begin{equation}\label{existence-add-13}
\begin{aligned}
&\frac{1}{2}\int_I \rho_0(\partial_t^2v)^2\,\diff x
+\int_0^t\int_I\frac{\rho_0(\partial_t^2v_x)^2}{\bar{\eta}_x^2}\,\diff x\diff s\\
&\quad=\frac{1}{2}\int_I \rho_0(\partial_t^2v)^2(x,0)\,\diff x+\int_0^t\int_I\partial_t^2\bigg({\frac{\rho_0^2}{\bar{\eta}_x^2}}\bigg)\partial_t^2v_x\,\diff x\diff s\\
&\qquad-\int_0^t\int_I\bigg[\partial_t^2\bigg(\frac{\rho_0v_x}{\bar{\eta}_x^2}\bigg)\partial_t^2v_x
-\frac{\rho_0(\partial_t^2v_x)^2}{\bar{\eta}_x^2}\bigg]\,\diff x\diff s.
\end{aligned}
\end{equation}
Note that
\begin{equation*}
\begin{aligned}
\bigg|\partial_t^2\bigg(\frac{1}{\bar{\eta}_x^2}\bigg)\bigg|\lesssim |\partial_t\bar{v}_x|+\bar{v}_x^2,
\end{aligned}
\end{equation*}
and
\begin{equation*}
\begin{aligned}
\bigg|\partial_t^2\bigg(\frac{v_x}{\bar{\eta}_x^2}\bigg)\partial_t^2v_x
-\frac{(\partial_t^2v_x)^2}{\bar{\eta}_x^2}\bigg|\lesssim (|v_x\partial_t\bar{v}_x|+|v_x\bar{v}_x^2|+|\bar{v}_x\partial_tv_x|)|\partial_t^2v_x|.
\end{aligned}
\end{equation*}
Then one may use Cauchy's inequality to obtain
\begin{equation}\label{existence-add-14}
\begin{aligned}
&\bigg|\int_0^t\int_I\partial_t^2\bigg({\frac{\rho_0^2}{\bar{\eta}_x^2}}\bigg)\partial_t^2v_x\,\diff x\diff s\bigg|\\
&\quad\leq \frac{1}{100}\int_0^t\int_I\rho_0(\partial_t^2v_x)^2\,\diff x\diff s+C\int_0^t\int_I\big[\rho_0(\partial_t\bar{v}_x)^2+\rho_0\bar{v}_x^2\big]\,\diff x\diff s\\
&\quad\leq \frac{1}{100}\int_0^t\int_I\rho_0(\partial_t^2v_x)^2\,\diff x\diff s+Ct P(\sup_{0\leq s\leq t}E^{1/2}(s,\bar{v})),
\end{aligned}
\end{equation}
and
\begin{equation}\label{existence-add-15}
\begin{aligned}
&\bigg|\int_0^t\int_I\bigg[\partial_t^2(\frac{\rho_0v_x}{\bar{\eta}_x^2})\partial_t^2v_x
-{\frac{\rho_0(\partial_t^2v_x)^2}{\bar{\eta}_x^2}}\bigg]\,\diff x\diff s\bigg|\\
&\quad\leq \frac{1}{100}\int_0^t\int_I\rho_0(\partial_t^2v_x)^2\,\diff x\diff s
+C\int_0^t\|\partial_t\bar{v}_x\|_{L^\infty}^2\int_I\rho_0v_x^2\,\diff x\diff s\\
&\quad\quad+C\int_0^t\|\bar{v}_x\|_{L^\infty}^4\int_I\rho_0v_x^2\,\diff x\diff s
+C\sup_{0\leq s\leq t}\|\bar{v}_x\|_{L^\infty}^2\int_0^t\int_I\rho_0(\partial_tv_x)^2\,\diff x\diff s\\
&\quad\leq \frac{1}{100}\int_0^t\int_I\rho_0(\partial_t^2v_x)^2\,\diff x\diff s+Ct P(\sup_{0\leq s\leq t}E^{1/2}(s,\bar{v})),
\end{aligned}
\end{equation}
where \eqref{existence-add-5} and \eqref{existence-add-8} have been used. 

It follows from \eqref{existence-add-13}-\eqref{existence-add-15} that
\begin{equation}\label{existence-add-16}
\begin{aligned}
\int_I \rho_0(\partial_t^2v)^2\,\diff x
+\int_0^t\int_I\rho_0(\partial_t^2v_x)^2\,\diff x\diff s
\leq M_0+Ct P(\sup_{0\leq s\leq t}E^{1/2}(s,\bar{v})).
\end{aligned}
\end{equation}

\bigskip
\noindent{\bf{Estimate of \(\|\sqrt{\rho_0}\partial_tv_x\|_{L^2(I)}\).}} Applying \(\partial_t\) to \eqref{existence-add-10}, 
and multiplying it by \(\partial_t^2v\), one obtains by some direct calculations that
\begin{equation}\label{existence-add-17}
\begin{aligned}
&\int_0^t\int_I \rho_0(\partial_t^2v)^2\,\diff x\diff s
+\frac{1}{2}\int_I\frac{\rho_0(\partial_tv_x)^2}{\bar{\eta}_x^2}\,\diff x\\
&=\frac{1}{2}\int_I\rho_0(\partial_tv_x)^2(x,0)\,\diff x+\int_0^t\int_I\partial_t\bigg({\frac{\rho_0^2}{\bar{\eta}_x^2}}\bigg)\partial_t^2v_x\,\diff x\diff s\\
&\quad+\frac{1}{2}\int_0^t\int_I\partial_t\bigg({\frac{\rho_0}{\bar{\eta}_x^2}}\bigg)(\partial_tv_x)^2\,\diff x\diff s
-\int_0^t\int_I\partial_t\bigg({\frac{\rho_0}{\bar{\eta}_x^2}}\bigg)v_x\partial_t^2v_x\,\diff x\diff s.
\end{aligned}
\end{equation}
The last three terms on the RHS of \eqref{existence-add-17} can be estimated as follows:
\begin{equation}\label{existence-add-18}
\begin{aligned}
\bigg|\int_0^t\int_I\bigg({\frac{\rho_0^2}{\bar{\eta}_x^2}}\bigg)\partial_t^2v_x\,\diff x\diff s\bigg|
&\lesssim \int_0^t\int_I\rho_0(\partial_t^2v_x)^2\,\diff x\diff s+\int_0^t\int_I\rho_0\bar{v}_x^2\,\diff x\diff s\\
&\leq M_0+Ct P(\sup_{0\leq s\leq t}E^{1/2}(s,\bar{v})),
\end{aligned}
\end{equation}
\begin{equation}\label{existence-add-19}
\begin{aligned}
\bigg|\int_0^t\int_I\partial_t\bigg({\frac{\rho_0}{\bar{\eta}_x^2}}\bigg)(\partial_tv_x)^2\,\diff x\diff s\bigg|
&\lesssim \sup_{0\leq s\leq t}\|\bar{v}_x\|_{L^\infty}\int_0^t\int_I\rho_0(\partial_tv_x)^2\,\diff x\diff s\\
&\leq M_0+Ct P(\sup_{0\leq s\leq t}E^{1/2}(s,\bar{v})),
\end{aligned}
\end{equation}
and
\begin{equation}\label{existence-add-20}
\begin{aligned}
\bigg|\int_0^t\int_I\partial_t\bigg({\frac{\rho_0}{\bar{\eta}_x^2}}\bigg)v_x\partial_t^2v_x\,\diff x\diff s\bigg|&\lesssim \int_0^t\int_I\rho_0(\partial_t^2v_x)^2\,\diff x\diff s\\
&\quad+\int_0^t\|\bar{v}_x\|_{L^\infty}^2\int_I\rho_0v_x^2\,\diff x\diff s\\
&\leq M_0+Ct P(\sup_{0\leq s\leq t}E^{1/2}(s,\bar{v})),
\end{aligned}
\end{equation}
where one has used \eqref{existence-add-16} in \eqref{existence-add-18} and \eqref{existence-add-20}. 

It then follows from \eqref{existence-add-17}-\eqref{existence-add-20} that
\begin{equation}\label{existence-add-21}
\begin{aligned}
\int_0^t\int_I \rho_0(\partial_t^2v)^2\,\diff x\diff s
+\int_I\rho_0(\partial_tv_x)^2\,\diff x
\leq M_0+Ct P(\sup_{0\leq s\leq t}E^{1/2}(s,\bar{v})).
\end{aligned}
\end{equation}

\bigskip
\noindent{\bf{Estimate of \(\|\rho_0^{3/2}\partial_x^3v\|_{L^2(I)}, \|\rho_0\partial_tv_{xx}\|_{L^2},\|\rho_0^2\partial_x^4v\|_{L^2}\).}} Now, one can estimate \(\|\rho_0^{3/2}\partial_x^3v\|_{L^2(I)}\) by using \eqref{existence-add-21}.  Indeed, one just needs to replace \eqref{I-Priori-ellip-12} by
\begin{equation*}
\begin{aligned}
\|\rho_0(\bar{\eta}_x^{-2})_{x}v_{xx}\|_{L^2}
\lesssim \|\rho_0v_{xx}\|_{L^2}\|\bar{\eta}_{xx}\|_{L^\infty}
\leq Ct P(\sup_{0\leq s\leq t}E^{1/2}(s,\bar{v})),
\end{aligned}
\end{equation*}
due to \eqref{existence-add-12}, and replace \eqref{I-Priori-ellip-14} by 
\begin{equation*}
\begin{aligned}
\|(\rho_0\bar{\eta}_x^{-2})_{xx}v_x\|_{L^2}
&\lesssim \|v_x\|_{L^2}(1+\|\bar{\eta}_{xx}\|_{L^\infty}
+\|\bar{\eta}_{xx}\|_{L^\infty}^2
+\|\rho_0\partial_x^3\bar{\eta}\|_{L^\infty})\\
&\lesssim (\|\rho_0v_x\|_{L^2}+\|\rho_0v_{xx}\|_{L^2})\\
&\quad\times(1+\|\bar{\eta}_{xx}\|_{L^\infty}
+\|\bar{\eta}_{xx}\|_{L^\infty}^2
+\|\rho_0\partial_x^3\bar{\eta}\|_{L^\infty})\\
&\leq [M_0+Ct P(\sup_{0\leq s\leq t}E^{1/2}(s,\bar{v}))]^{1/2},
\end{aligned}
\end{equation*}
due to \eqref{existence-add-8} and \eqref{existence-add-12}, 
and then repeat the argument for \eqref{I-Priori-ellip-third} to get 
\begin{equation}\label{existence-add-23}
\begin{aligned}
\|\rho_0^{3/2}\partial_x^3v\|_{L^2(I)}^2\leq M_0+Ct P(\sup_{0\leq s\leq t}E^{1/2}(s,\bar{v})). 
\end{aligned}
\end{equation}

Similarly, one may repeat the arguments for \eqref{I-Priori-ellip-22} and \eqref{I-Priori-ellip-31} to obtain   
\begin{equation}\label{existence-add-24}
\begin{aligned}
\|\rho_0\partial_tv_{xx}\|_{L^2}^2+\|\rho_0^2\partial_x^4v\|_{L^2}^2
\leq M_0+Ct P(\sup_{0\leq s\leq t}E^{1/2}(s,\bar{v})). 
\end{aligned}
\end{equation}

\bigskip
\noindent{\bf{Estimate of \(\|\sqrt{\rho_0}\partial_t^3v\|_{L^2(I)}\).}} Next,  \(\|\sqrt{\rho_0}\partial_t^3v\|_{L^2(I)}\) can be estimated due to \eqref{existence-add-23} and \eqref{existence-add-24}. Indeed, one can apply \(\partial_t^3\) to \eqref{existence-add-10} and multiply it by \(\partial_t^3v\), after some elementary computations, to obtain that
\begin{equation}\label{existence-add-26}
\begin{aligned}
&\frac{1}{2}\int_I \rho_0(\partial_t^3v)^2\,\diff x
+\int_0^t\int_I\frac{\rho_0(\partial_t^3v_x)^2}{\bar{\eta}_x^2}\,\diff x\diff s\\
&\quad=\frac{1}{2}\int_I \rho_0(\partial_t^3v)^2(x,0)\,\diff x+\int_0^t\int_I\partial_t^3\bigg({\frac{\rho_0^2}{\bar{\eta}_x^2}}\bigg)\partial_t^3v_x\,\diff x\diff s\\
&\qquad-\int_0^t\int_I\bigg[\partial_t^3\bigg(\frac{\rho_0v_x}{\bar{\eta}_x^2}\bigg)\partial_t^3v_x
-\frac{\rho_0(\partial_t^3v_x)^2}{\bar{\eta}_x^2}\bigg]\,\diff x\diff s.
\end{aligned}
\end{equation}
Similar to \eqref{Add-1} and \eqref{Add-2},  one gets from \eqref{eta-bound-2} that
\begin{equation*}
\begin{aligned}
\bigg|\partial_t^3\bigg(\frac{1}{\bar{\eta}_x^2}\bigg)\bigg|\lesssim |\partial_t^2\bar{v}_x|+|\bar{v}_x\partial_t\bar{v}_x|+|\bar{v}_x^3|,
\end{aligned}
\end{equation*}
and
\begin{equation*}
\begin{aligned}
\bigg|\partial_t^3\bigg(\frac{v_x}{\bar{\eta}_x^2}\bigg)\partial_t^3v_x
-\frac{(\partial_t^3v_x)^2}{\bar{\eta}_x^2}\bigg|
&\lesssim \big[|\bar{v}_x\partial_t^2v_x|+|\partial_tv_x|(|\bar{v}_x|^2+|\partial_t\bar{v}_x|)+|v_x||\bar{v}_x|^3\\
&\quad+|v_x\partial_t^2\bar{v}_x|+|\partial_t\bar{v}_x||\bar{v}_xv_x|\big]|\partial_t^3v_x|.
\end{aligned}
\end{equation*}
Then one uses Cauchy's inequality to obtain
\begin{equation}\label{existence-add-27}
\begin{aligned}
&\bigg|\int_0^t\int_I\partial_t^3\bigg({\frac{\rho_0^2}{\bar{\eta}_x^2}}\bigg)\partial_t^3v_x\,\diff x\diff s\bigg|\\
&\quad\leq \frac{1}{100}\int_0^t\int_I\rho_0(\partial_t^3v_x)^2\,\diff x\diff s
+C\int_0^t\int_I\rho_0(\partial_t^2\bar{v}_x)^2\,\diff x\diff s\\
&\qquad+C\int_0^t\|\bar{v}_x\|_{L^\infty}^2\int_I\rho_0(\partial_t\bar{v}_x)^2\,\diff x\diff s
+C\int_0^t\|\bar{v}_x\|_{L^\infty}^4\int_I\rho_0\bar{v}_x^2\,\diff x\diff s\\
&\quad\leq \frac{1}{100}\int_0^t\int_I\rho_0(\partial_t^3v_x)^2\,\diff x\diff s+Ct P(\sup_{0\leq s\leq t}E^{1/2}(s,\bar{v})),
\end{aligned}
\end{equation}
and
\begin{equation}\label{existence-add-28}
\begin{aligned}
&\bigg|\int_0^t\int_I\bigg[\partial_t^3\bigg(\frac{\rho_0v_x}{\bar{\eta}_x^2}\bigg)\partial_t^3v_x
-\frac{\rho_0(\partial_t^3v_x)^2}{\bar{\eta}_x^2}\bigg]\,\diff x\diff s\bigg|\\
&\quad\leq \frac{1}{100}\int_0^t\int_I\rho_0(\partial_t^3v_x)^2\,\diff x\diff s
+C\sup_{0\leq s\leq t}\|\bar{v}_x\|_{L^\infty}^2\int_0^t\int_I\rho_0(\partial_t^2v_x)^2\,\diff x\diff s\\
&\qquad+C\int_0^t\|\bar{v}_x\|_{L^\infty}^4\int_I\rho_0(\partial_tv_x)^2\,\diff x\diff s+C\int_0^t\|\partial_t\bar{v}_x\|_{L^\infty}^2\int_I\rho_0(\partial_tv_x)^2\,\diff x\diff s\\
&\qquad+C\int_0^t\|\bar{v}_x\|_{L^\infty}^6\int_I\rho_0v_x^2\,\diff x\diff s+C\int_0^t\|v_x\|_{L^\infty}^2\int_I\rho_0(\partial_t^2\bar{v}_x)^2\,\diff x\diff s\\
&\qquad+C\int_0^t\|\bar{v}_x\|_{L^\infty}^2\|v_x\|_{L^\infty}^2\int_I\rho_0(\partial_t\bar{v}_x)^2\,\diff x\diff s\\
&\quad\leq \frac{1}{100}\int_0^t\int_I\rho_0(\partial_t^3v_x)^2\,\diff x\diff s+Ct P(\sup_{0\leq s\leq t}E^{1/2}(s,\bar{v})),
\end{aligned}
\end{equation}
where one has used \eqref{existence-add-8}, \eqref{existence-add-12}, \eqref{existence-add-23} and \eqref{existence-add-24} to estimate 
\begin{equation*}
\begin{aligned}
\|v_x\|_{L^\infty}&\lesssim \|v_x\|_{L^2}+\|v_{xx}\|_{L^2}\\
&\lesssim (\|\rho_0v_x\|_{L^2}+\|\rho_0v_{xx}\|_{L^2})+(\|\rho_0v_{xx}\|_{L^2}+\|\rho_0\partial_x^3v\|_{L^2})\\
&\lesssim \|\rho_0v_x\|_{L^2}+\|\rho_0v_{xx}\|_{L^2}+\|\rho_0^2\partial_x^3v\|_{L^2}+\|\rho_0^2\partial_x^4v\|_{L^2}\\
&\leq M_0+Ct P(\sup_{0\leq s\leq t}E^{1/2}(s,\bar{v})). 
\end{aligned}
\end{equation*}

It follows from \eqref{existence-add-26}-\eqref{existence-add-28} that
\begin{equation}\label{existence-add-29}
\begin{aligned}
\int_I \rho_0(\partial_t^3v)^2\,\diff x
+\int_0^t\int_I\rho_0(\partial_t^3v_x)^2\,\diff x\diff s
\leq M_0+Ct P(\sup_{0\leq s\leq t}E^{1/2}(s,\bar{v})).
\end{aligned}
\end{equation}

\bigskip
\noindent{\bf{Estimate of \(\|\sqrt{\rho_0}\partial_t^2v_x\|_{L^2(I)}\).}}  In view of \eqref{existence-add-29}, similar to \eqref{II-Priori-time-10}, one can derive that
\begin{equation}\label{existence-add-30}
\begin{aligned}
\int_0^t\int_I \rho_0(\partial_t^3v)^2\,\diff x\diff s
+\int_I\rho_0(\partial_t^2v_x)^2\,\diff x
\leq M_0+Ct P(\sup_{0\leq s\leq t}E^{1/2}(s,\bar{v})).
\end{aligned}
\end{equation}

\bigskip
\noindent{\bf{Estimate of \(\|\rho_0^{3/2}\partial_t\partial_x^3v\|_{L^2},\|\rho_0^{5/2}\partial_x^5v\|_{L^2}, \|\rho_0^2\partial_t\partial_x^4v\|_{L^2(I)}, \|\rho_0^3\partial_x^6v\|_{L^2(I)}\).}}
By \eqref{existence-add-29} and \eqref{existence-add-30}, one may repeat the arguments for \eqref{II-Priori-ellip-4}, \eqref{II-Priori-ellip-11}, \eqref{II-Priori-ellip-18} and \eqref{II-Priori-ellip-25} to obtain   
\begin{equation}\label{existence-add-31}
\begin{aligned}
&\|\rho_0^{3/2}\partial_t\partial_x^3v\|_{L^2}^2+\|\rho_0^{5/2}\partial_x^5v\|_{L^2}^2+\|\rho_0^2\partial_t\partial_x^4v\|_{L^2(I)}^2+\|\rho_0^3\partial_x^6v\|_{L^2(I)}^2\\
&\quad\leq M_0+Ct P(\sup_{0\leq s\leq t}E^{1/2}(s,\bar{v})). 
\end{aligned}
\end{equation}

Finally, \eqref{existence-add-0} follows from \eqref{solution bound}, \eqref{existence-add-5}, \eqref{existence-add-8}, \eqref{existence-add-12}, \eqref{existence-add-16}, \eqref{existence-add-21}, \eqref{existence-add-23}, \eqref{existence-add-24}, \eqref{existence-add-29}, \eqref{existence-add-30} and \eqref{existence-add-31}.

\end{proof}

\subsection{Existence of a classical solution to the problem \eqref{existence-3}.}\label{classical solution}
In order to show that
there exists a classical solution to the problem \eqref{existence-3}, we will construct its approximate solutions and show the approximate solutions converge uniformly by a contraction mapping method. Therefore we consider the following iteration problem:
\begin{equation}\label{existence-21}
	\begin{cases}
		\rho_0v_t^{(n)}+\bigg[\frac{\rho_0^2}{(\eta_x^{(n-1)})^2}\bigg]_x
		=\bigg[\frac{\rho_0v_x^{(n)}}{(\eta_x^{(n-1)})^2}\bigg]_x &\quad \mbox{in}\ I\times (0,T],\\
		v^{(n)}=u_0 &\quad \mbox{on}\ I\times \{t=0\},\\
		v_x^{(n)}=0 &\quad \mbox{on}\ \Gamma\times (0,T].
	\end{cases}
\end{equation}
For \(n=1\), we impose 
\(\eta^{(0)}(t,x)=x+tu_0(x)\). We then solve the problem \eqref{existence-21} for \(n=1,2,...\) iteratively. 
Given \(T>0\) sufficiently small, 
in view of Lemma \ref{Regularity lemma}, one can obtain \(\{v^{(n)}\}_{n=1}^\infty\subset \mathcal{C}_T(M_1)\) for any \(n\geq 1\). 

In the following, we will show that the approximate solutions \(\{v^{(n)}\}_{n=1}^\infty\) 
are contractive in some appropriate energy space. To this end, 
setting \(\sigma(v^{(n)}):=v^{(n+1)}-v^{(n)}\), one deduces
\begin{equation}\label{existence-22}
	\begin{cases}
		\rho_0\partial_t\sigma(v^{(n)})+\bigg[\frac{\rho_0^2}{(\eta_x^{(n)})^2}\bigg]_x-\bigg[\frac{\rho_0^2}{(\eta_x^{(n-1)})^2}\bigg]_x\\
		\qquad\qquad\qquad=\bigg[\frac{\rho_0v_x^{(n+1)}}{(\eta_x^{(n)})^2}\bigg]_x
		-\bigg[\frac{\rho_0v_x^{(n)}}{(\eta_x^{(n-1)})^2}\bigg]_x &\quad \mbox{in}\ I\times (0,T],\\
	\sigma(v^{(n)})=0 &\quad \mbox{on}\ I\times \{t=0\},\\
	\sigma_x(v^{(n)})=0 &\quad \mbox{on}\ \Gamma\times (0,T].
	\end{cases}
\end{equation}

\begin{lemma} It holds that
	\begin{equation}\label{existence-23}
		\begin{aligned}
			&\frac{\diff}{\diff t}\int_I\rho_0[\sigma(v^{(n)})]^2\,\diff x
			+\int_I\rho_0[\sigma_x(v^{(n)})]^2\,\diff x\\
			&\quad\leq C(M_1^{1/2}+1)t\int_0^t\int_I\rho_0[\sigma_x(v^{(n-1)})]^2\,\diff x\diff s.
		\end{aligned}
	\end{equation}		
\end{lemma}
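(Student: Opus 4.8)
The plan is to run a weighted $L^2$ energy estimate on the difference equation \eqref{existence-22}, testing against $\sigma(v^{(n)})$ itself. Write $w=\sigma(v^{(n)})=v^{(n+1)}-v^{(n)}$ and $z=\sigma(v^{(n-1)})=v^{(n)}-v^{(n-1)}$, and record the key identity $\eta_x^{(n)}-\eta_x^{(n-1)}=\int_0^t z_x\,\diff s$ coming from \eqref{existence-1}. Multiplying \(\eqref{existence-22}_1\) by $w$ and integrating over $I$, the first term becomes $\tfrac12\frac{\diff}{\diff t}\int_I\rho_0 w^2\,\diff x$ because $\rho_0$ is time-independent. Every remaining term is integrated by parts in $x$; all boundary contributions vanish since $\rho_0=0$ on $\Gamma$ (so the vanishing weight, not the Neumann condition, does the killing). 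Using $v_x^{(n+1)}=w_x+v_x^{(n)}$, the viscosity term splits into a coercive piece $-\int_I \rho_0 w_x^2/(\eta_x^{(n)})^2\,\diff x$ and a remainder, so that the identity I aim for is
\[
\tfrac12\frac{\diff}{\diff t}\int_I\rho_0 w^2\,\diff x+\int_I\frac{\rho_0 w_x^2}{(\eta_x^{(n)})^2}\,\diff x = -A+B_2,
\]
where, after one integration by parts,
\[
A=-\int_I\rho_0^2\Bigl((\eta_x^{(n)})^{-2}-(\eta_x^{(n-1)})^{-2}\Bigr)w_x\,\diff x,\qquad B_2=-\int_I\rho_0 v_x^{(n)}\Bigl((\eta_x^{(n)})^{-2}-(\eta_x^{(n-1)})^{-2}\Bigr)w_x\,\diff x.
\]

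Next I would estimate $A$ and $B_2$. The engine is the elementary bound $\bigl|(\eta_x^{(n)})^{-2}-(\eta_x^{(n-1)})^{-2}\bigr|\lesssim \abs{\eta_x^{(n)}-\eta_x^{(n-1)}}\le\int_0^t\abs{z_x}\,\diff s$, valid because $1/2\le\eta_x^{(n)},\eta_x^{(n-1)}\le 3/2$ by the argument for \eqref{eta-bound} (since $v^{(n)},v^{(n-1)}\in\mathcal{C}_T(M_1)$). For $A$ I distribute the weight as $\rho_0^2=\sqrt{\rho_0}\,\rho_0^{3/2}$, apply Cauchy--Schwarz, use $\rho_0^3\lesssim\rho_0$ (boundedness of $\rho_0$ on $\bar I$ via \eqref{eq:intro-3}), and the time Cauchy--Schwarz $\bigl(\int_0^t\abs{z_x}\,\diff s\bigr)^2\le t\int_0^t z_x^2\,\diff s$ to obtain
\[
\abs{A}\le \epsilon\int_I\rho_0 w_x^2\,\diff x+Ct\int_0^t\int_I\rho_0 z_x^2\,\diff x\diff s.
\]
For $B_2$ I pull out $\norm{v_x^{(n)}}_{L^\infty}\lesssim E^{1/2}\le M_1^{1/2}$ by \eqref{Preliminary-3} and run the same space and time Cauchy--Schwarz, which yields
\[
\abs{B_2}\le\epsilon\int_I\rho_0 w_x^2\,\diff x+C(M_1^{1/2}+1)t\int_0^t\int_I\rho_0 z_x^2\,\diff x\diff s.
\]

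Finally I would close the estimate: since $\eta_x^{(n)}\le 3/2$ gives $\int_I\rho_0 w_x^2/(\eta_x^{(n)})^2\,\diff x\ge\tfrac49\int_I\rho_0 w_x^2\,\diff x$, choosing $\epsilon$ small absorbs the $w_x$-contributions of $A$ and $B_2$ into the coercive term. After multiplying by a harmless constant this leaves exactly
\[
\frac{\diff}{\diff t}\int_I\rho_0 w^2\,\diff x+\int_I\rho_0 w_x^2\,\diff x\le C(M_1^{1/2}+1)t\int_0^t\int_I\rho_0 z_x^2\,\diff x\diff s,
\]
which is \eqref{existence-23}.

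The main obstacle is the degeneracy: the estimate must close entirely in the weighted space $\int_I\rho_0(\cdot)^2$, and the only available coercivity, $\int_I\rho_0 w_x^2/(\eta_x^{(n)})^2$, is itself linear in $w_x$; hence the viscosity-difference term $B_2$ (also linear in $w_x$) must be \emph{absorbed} rather than merely bounded, which forces the careful split of the weights and the simultaneous use of $1/2\le\eta_x\le 3/2$ for both the lower coercivity bound and the Lipschitz bound on $(\eta_x)^{-2}$. A second delicate point is manufacturing the factor $t$ on the right-hand side, which is essential so that, upon integrating in $t$ and iterating, the solution map becomes a contraction for small $T$; this $t$ is produced solely by the time Cauchy--Schwarz applied to $\eta_x^{(n)}-\eta_x^{(n-1)}=\int_0^t z_x\,\diff s$, and one must keep the full $\int_0^t\int_I\rho_0 z_x^2\,\diff x\diff s$ structure rather than collapsing it to an $L^\infty$-in-time bound.
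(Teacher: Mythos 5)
Your proposal is correct and follows essentially the same route as the paper's own proof: testing \eqref{existence-22} against $\sigma(v^{(n)})$, splitting the viscosity difference by adding and subtracting $\rho_0 v_x^{(n)}/(\eta_x^{(n)})^2$ (your coercive piece and $B_2$ are exactly the paper's $I_1$ and $I_2$), using $\eta_x^{(n)}-\eta_x^{(n-1)}=\int_0^t\sigma_x(v^{(n-1)})\,\diff s$ with the bounds $1/2\le\eta_x^{(n)},\eta_x^{(n-1)}\le 3/2$, and producing the factor $t$ by Cauchy--Schwarz in time before absorbing the $w_x$-terms. The only blemish is that Young's inequality with a fixed absorption constant actually yields $\|v_x^{(n)}\|_{L^\infty}^2\lesssim M_1$ rather than $M_1^{1/2}$ in front of the right-hand side, but this imprecision is present in the paper's \eqref{existence-27} as well and is harmless, since any fixed polynomial in $M_1$ suffices for the contraction once $T$ is taken small.
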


\begin{proof} Multiplying Equation \(\eqref{existence-22}_1\) by \(\sigma(v^{(n)})\) and integrating by parts with respect to \(x\) yield
	\begin{equation}\label{existence-24}
		\begin{aligned}
			&\frac{1}{2}\frac{\diff}{\diff t}\int_I\rho_0[\sigma(v^{(n)})]^2\,\diff x
			+\int_I\bigg[\frac{\rho_0v_x^{(n+1)}}{(\eta_x^{(n)})^2}
			-\frac{\rho_0v_x^{(n)}}{(\eta_x^{(n-1)})^2}\bigg]\sigma_x(v^{(n)})\,\diff x\\
			&\quad=\int_I\bigg[\frac{\rho_0^2}{(\eta_x^{(n)})^2}-\frac{\rho_0^2}{(\eta_x^{(n-1)})^2}\bigg]\sigma_x(v^{(n)})\,\diff x.		 
		\end{aligned}
	\end{equation}	
Note that
	\begin{equation}\label{existence-25}
		\begin{aligned}
			&\int_I\bigg[\frac{\rho_0v_x^{(n+1)}}{(\eta_x^{(n)})^2}
			-\frac{\rho_0v_x^{(n)}}{(\eta_x^{(n-1)})^2}\bigg]\sigma_x(v^{(n)})\,\diff x\\
			&\quad=\int_I\bigg[\frac{\rho_0v_x^{(n+1)}}{(\eta_x^{(n)})^2}
			-\frac{\rho_0v_x^{(n)}}{(\eta_x^{(n)})^2}\bigg]\sigma_x(v^{(n)})\,\diff x\\
			&\qquad+\int_I\bigg[\frac{\rho_0v_x^{(n)}}{(\eta_x^{(n)})^2}
			-\frac{\rho_0v_x^{(n)}}{(\eta_x^{(n-1)})^2}\bigg]\sigma_x(v^{(n)})\,\diff x\\ 
			&\quad=\colon I_1+I_2.
		\end{aligned}
	\end{equation}	
Direct estimates yield
\begin{equation}\label{existence-26}
	\begin{aligned}
	I_1=\int_I\frac{\rho_0}{(\eta_x^{(n)})^2}
	[\sigma_x(v^{(n)})]^2\,\diff x\geq \frac{4}{9}\int_I\rho_0[\sigma_x(v^{(n)})]^2\,\diff x,
	\end{aligned}
\end{equation}	
and
	\begin{equation}\label{existence-27}
	\begin{aligned}
		|I_2|&=\bigg|\int_I\bigg[\frac{\rho_0v_x^{(n)}}{(\eta_x^{(n)})^2(\eta_x^{(n-1)})^2}(\eta_x^{(n)}+\eta_x^{(n-1)})
		\int_0^t\sigma_x(v^{(n-1)})\,\diff s\bigg]\sigma_x(v^{(n)})\,\diff x\bigg|\\
		&\leq \frac{1}{100}\int_I\rho_0[\sigma_x(v^{(n)})]^2\,\diff x+Ct\|v_x^{(n)}\|_{L^\infty}\int_0^t\int_I\rho_0[\sigma_x(v^{(n-1)})]^2\,\diff x\diff s.
	\end{aligned}
\end{equation}	
Hence it follows from \eqref{existence-25}-\eqref{existence-27} that
	\begin{equation}\label{existence-28}
	\begin{aligned}
		&-\int_I\bigg[\bigg(\frac{\rho_0v_x^{(n+1)}}{(\eta_x^{(n)})^2}\bigg)_x
		-\bigg(\frac{\rho_0v_x^{(n)}}{(\eta_x^{(n-1)})^2}\bigg)_x\bigg]\sigma(v^{(n)})\,\diff x\\
		&\quad\geq \frac{1}{3}\int_I\rho_0[\sigma_x(v^{(n)})]^2\,\diff x-Ct\|v_x^{(n)}\|_{L^\infty}\int_0^t\int_I\rho_0[\sigma_x(v^{(n-1)})]^2\,\diff x\diff s.
	\end{aligned}
\end{equation}		
Similar to \eqref{existence-27}, one has
	\begin{equation}\label{existence-29}
		\begin{aligned}
			&\int_I\bigg[\frac{\rho_0^2}{(\eta_x^{(n)})^2}-\frac{\rho_0^2}{(\eta_x^{(n-1)})^2}\bigg]\sigma_x(v^{(n)})\,\diff x\\
			&\quad=\int_I\bigg[\frac{\rho_0^2}{(\eta_x^{(n)})^2(\eta_x^{(n-1)})^2}(\eta_x^{(n)}+\eta_x^{(n-1)})
			\int_0^t\sigma_x(v^{(n-1)})\,\diff s\bigg]\sigma_x(v^{(n)})\,\diff x\\
			&\quad\leq\frac{1}{100}\int_I\rho_0[\sigma_x(v^{(n)})]^2\,\diff x+Ct\int_0^t\int_I\rho_0[\sigma_x(v^{(n-1)})]^2\,\diff x\diff s. 
		\end{aligned}
	\end{equation}	

Substituting \eqref{existence-28} and \eqref{existence-29} into \eqref{existence-24} gives 
	\begin{equation*}
		\begin{aligned}
			&\frac{\diff}{\diff t}\int_I\rho_0[\sigma(v^{(n)})]^2\,\diff x
			+\int_I\rho_0[\sigma_x(v^{(n)})]^2\,\diff x\\
			&\quad\leq C(\|v_x^{(n)}\|_{L^\infty(I)}+1)t\int_0^t\int_I\rho_0[\sigma_x(v^{(n-1)})]^2\,\diff x\diff s
			\\
			&\quad\leq C(M_1^{1/2}+1)t\int_0^t\int_I\rho_0[\sigma_x(v^{(n-1)})]^2\,\diff x\diff s,
		\end{aligned}
	\end{equation*}	
where one has used \(\{v^{(n)}\}_{n=1}^\infty\subset \mathcal{C}_T(M_1)\) in the last line. Hence \eqref{existence-23} follows.
\end{proof}

Integrating \eqref{existence-23} with respect to \(t\) on \([0,T]\), we deduce 
	\begin{equation*}
		\begin{aligned}
			&\underset{0\leq t\leq T}{\sup}\|\rho_0^{1/2}\sigma(v^{(n)})\|_{L^2(I)}^2+\|\rho_0^{1/2}\sigma_x(v^{(n)})\|_{L^2([0,T];L^2(I))}^2\\
			&\quad\leq \|\rho_0^{1/2}\sigma(v^{(n)})(0)\|_{L^2(I)}^2+C(M_1^{1/2}+1)T\|\rho_0^{1/2}\sigma_x(v^{(n-1)})\|_{L^2([0,T];L^2(I))}^2\\
			&\quad=C(M_1^{1/2}+1)T\|\rho_0^{1/2}\sigma_x(v^{(n-1)})\|_{L^2([0,T];L^2(I))}^2\\
			&\quad\leq \frac{1}{4}\big(\underset{0\leq t\leq T}{\sup}\|\rho_0^{1/2}\sigma(v^{(n-1)})\|_{L^2(I)}^2+
			\|\rho_0^{1/2}\sigma_x(v^{(n-1)})\|_{L^2([0,T];L^2(I))}^2\big),
		\end{aligned}
	\end{equation*}
since \(T>0\) is sufficiently small.
Hence for any \(n\geq 1\)
\begin{equation}\label{existence-30}
	\begin{aligned}
		&\underset{0\leq t\leq T}{\sup}\|\rho_0^{1/2}\sigma(v^{(n)})\|_{L^2(I)}+\|\rho_0^{1/2}\sigma_x(v^{(n)})\|_{L^2([0,T];L^2(I))}\\
		&\quad\leq \frac{1}{2}\big(\underset{0\leq t\leq T}{\sup}\|\rho_0^{1/2}\sigma(v^{(n-1)})\|_{L^2(I)}+
		\|\rho_0^{1/2}\sigma_x(v^{(n-1)})\|_{L^2([0,T];L^2(I))}\big).
	\end{aligned}
\end{equation}

The estimates \eqref{existence-30} imply that \(\{v^{(n)}\}_{n=1}^\infty\) is a Cauchy sequence in the space \(L^2([0,T],L^2(I))\) by using the weighted Sobolev inequality \eqref{ineq:weighted Sobolev}. According to this fact and the a priori bound \eqref{I-Priori-ellip-33} (see \eqref{Preliminary-1} that this a priori bound \eqref{I-Priori-ellip-33} controls \(H^3(I)\)-bound of \(v\)), one may furthermore deduce that \(\{v^{(n)}\}_{n=1}^\infty\) is a Cauchy sequence in the space \(L^2([0,T],H^s(I))\) \((0<s<3)\) by using the standard Gagliardo-Nirenberg interpolation inequality for functions in spatial variables (see \cite{MR3813967}). 
However this is insufficient for us to pass limit in \(n\) in Equation \(\eqref{existence-3}_1\) for time pointwisely. To get around this difficulty, we need the following weighted interpolation inequality:

\begin{lemma}[Weighted Interpolation Inequality]\label{le:Preliminary-5} The following weighted interpolation holds
	\begin{align}\label{Weighted Interpolation}
		\|g\|_{L^2(I)}\lesssim \|g\|_{L_{\rho_0}^2(I)}^{1/2}\|g\|_{H_{\rho_0}^1(I)}^{1/2},
	\end{align}
	where
	\begin{equation*}
		\begin{aligned}
			\|g\|_{L_{\rho_0}^2(I)}^2=\int_I\rho_0g^2\,\diff x\quad {\rm{and}}\quad 	\|g\|_{H_{\rho_0}^1(I)}^2=\int_I\rho_0(g^2+g_x^2)\,\diff x.
		\end{aligned}
	\end{equation*}	
\end{lemma}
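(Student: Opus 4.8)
The plan is to reduce \eqref{Weighted Interpolation}, via the equivalence $\rho_0(x)\sim d(x)=\min(x,1-x)$ furnished by \eqref{eq:intro-3}, to a weighted interpolation with the explicit weight $d$, and then to exploit the one–dimensional structure through a single integration by parts on each half of $I$. Throughout I write $A=\|g\|_{L_{\rho_0}^2(I)}^2=\int_I\rho_0 g^2\,\diff x$ and $B=\|g\|_{H_{\rho_0}^1(I)}^2=\int_I\rho_0(g^2+g_x^2)\,\diff x$, and I record at the outset the elementary but decisive monotonicity $A\le B$, which will let me upgrade any stray factor of $A$ to the sought geometric mean $A^{1/2}B^{1/2}$. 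It suffices to prove the inequality for $g\in C^1(\bar I)$, since the functions to which it is applied lie in $H^3(I)\hookrightarrow C^1(\bar I)$; for such $g$ all integrals below converge and the boundary contributions at the degenerate endpoints $x=0,1$ vanish because $d(x)g(x)^2\to 0$ there.

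First I would split $\int_I g^2\,\diff x=\int_0^{1/2}g^2\,\diff x+\int_{1/2}^1 g^2\,\diff x$, so that on each piece $d$ is the smooth weight $x$ (resp. $1-x$) with derivative $\pm1$. Integrating by parts on $(0,1/2)$ against $x'=1$ gives
\begin{equation*}
\int_0^{1/2}g^2\,\diff x=\tfrac12 g(1/2)^2-2\int_0^{1/2}x\,g g_x\,\diff x,
\end{equation*}
and symmetrically on $(1/2,1)$ one obtains the same interior value $\tfrac12 g(1/2)^2$ plus a cross term $2\int_{1/2}^1(1-x)gg_x\,\diff x$. The cross terms are controlled at once: by Cauchy--Schwarz together with $d\le\rho_0/C_1$ from \eqref{eq:intro-3},
\begin{equation*}
\Big|\int_0^{1/2}x\,gg_x\,\diff x\Big|\le\Big(\int_0^{1/2}x g^2\,\diff x\Big)^{1/2}\Big(\int_0^{1/2}x g_x^2\,\diff x\Big)^{1/2}\lesssim A^{1/2}B^{1/2},
\end{equation*}
and likewise for the other half.

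The one point requiring care — and the main obstacle — is the interior boundary term $g(1/2)^2$: a naive Sobolev embedding only yields $g(1/2)^2\lesssim B$, which is too weak, since $A^{1/2}B^{1/2}$ may be far smaller than $B$. I would instead bound it by a one–dimensional interpolation (Agmon) estimate on $J=(1/4,3/4)$, where the weight is non-degenerate ($\rho_0\ge C_1/4$): averaging the identity $g(y)^2=g(z)^2+2\int_z^y gg_t\,\diff t$ over $z\in J$ gives, for every $y\in J$,
\begin{equation*}
g(y)^2\le \frac{1}{|J|}\int_J g^2\,\diff z+2\Big(\int_J g^2\,\diff z\Big)^{1/2}\Big(\int_J g_x^2\,\diff z\Big)^{1/2}\lesssim A+A^{1/2}B^{1/2},
\end{equation*}
and invoking $A\le B$ to replace $A$ by $A^{1/2}B^{1/2}$ yields $g(1/2)^2\lesssim A^{1/2}B^{1/2}$. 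Collecting the two cross terms and the interior value produces $\int_I g^2\,\diff x\lesssim A^{1/2}B^{1/2}$, which is exactly \eqref{Weighted Interpolation}. The only genuinely substantive ingredients are the reduction $\rho_0\sim d$, the monotonicity $A\le B$, and the use of interpolation rather than plain embedding at the non-degenerate interior point; all remaining steps are routine bookkeeping.
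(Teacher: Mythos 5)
Your proof is correct, and its skeleton coincides with the paper's: both reduce $\rho_0$ to the distance weight via \eqref{eq:intro-3}, split $I$ at $x=1/2$, integrate by parts once on each half to produce $\tfrac12 g^2(1/2)$ plus a cross term, and control the cross terms by weighted Cauchy--Schwarz. The genuine difference is in the step you correctly single out as the crux, namely the interior value $g^2(1/2)$. The paper handles it by a \emph{second} integration by parts on the same half-interval against the weight $x^2=\rho_0^2$, which gives the identity
\begin{equation*}
\tfrac18 g^2(1/2)=\int_0^{1/2}\rho_0 g^2\,\diff x+\int_0^{1/2}\rho_0^2 gg_x\,\diff x,
\end{equation*}
whose right-hand side is then bounded (with your notation $A=\|g\|_{L_{\rho_0}^2(I)}^2$, $B=\|g\|_{H_{\rho_0}^1(I)}^2$) by $A+A^{1/2}B^{1/2}\lesssim A^{1/2}B^{1/2}$, the same implicit use of $A\le B$ that you make explicit. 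You instead invoke an Agmon-type averaging estimate on the interior interval $J=(1/4,3/4)$, where $\rho_0\ge C_1/4$ is non-degenerate, to get $g^2(1/2)\lesssim A+A^{1/2}B^{1/2}$. Both routes are sound and of comparable length. The paper's trick is more self-contained: it never leaves the weighted integration-by-parts framework and needs no distinguished interior region. Yours is more modular: it cleanly separates the degeneracy near $\Gamma$ (handled by the weighted integration by parts) from the point evaluation (handled by a standard one-dimensional interpolation), and it adapts verbatim to any weight comparable to the distance function near the boundary and bounded below on a neighborhood of the evaluation point.
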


\begin{proof} Due to the assumption \eqref{eq:intro-3} on \(\rho_0\), it suffices to prove \eqref{Weighted Interpolation}  for \(\rho_0\) with \(\rho_0(x)=x\) on \([0,1/2]\) and \(1-x\) on \([1/2,1]\). Note that
	\begin{equation*}
		\begin{aligned}
			\int_Ig^2\,\diff x= \int_0^{1/2}g^2\,\diff x+\int_{1/2}^1g^2\,\diff x.
		\end{aligned}
	\end{equation*}	
	Integration by parts yields 
	\begin{equation}\label{existence-31}
		\begin{aligned}
			\int_0^{1/2}g^2\,\diff x&=xg^2(x)\big|_{x=0}^{x=1/2}-2\int_0^{1/2}xgg_x\,\diff x\\
			&=\frac{1}{2}g^2(\frac{1}{2})-2\int_0^{1/2}\rho_0gg_x\,\diff x.
		\end{aligned}
	\end{equation}
To estimate \(g(\frac{1}{2})\), one has
	\begin{equation}\label{existence-32}
		\begin{aligned}
			&\int_0^{1/2}\rho_0g^2\,\diff x=\int_0^{1/2}xg^2\,\diff x\\
			&\quad=\frac{1}{2}x^2g^2(x)\big|_{x=0}^{x=1/2}-\int_0^{1/2}x^2gg_x\,\diff x\\
			&\quad=\frac{1}{8}g^2(\frac{1}{2})-\int_0^{1/2}\rho_0^2gg_x\,\diff x.
		\end{aligned}
	\end{equation}
It follows from \eqref{existence-31} and \eqref{existence-32} that
	\begin{equation}\label{existence-33}
		\begin{aligned}
			\int_0^{1/2}g^2\,\diff x
			&=4\int_0^{1/2}\rho_0g^2\,\diff x+4\int_0^{1/2}\rho_0^2gg_x\,\diff x-2\int_0^{1/2}\rho_0gg_x\,\diff x\\
			&\lesssim \int_0^{1/2}\rho_0g^2\,\diff x+\bigg(\int_0^{1/2}\rho_0^2g^2\,\diff x\bigg)^{1/2}\bigg(\int_0^{1/2}\rho_0^2g_x^2\,\diff x\bigg)^{1/2}\\
			&\quad+\bigg(\int_0^{1/2}\rho_0g^2\,\diff x\bigg)^{1/2}\bigg(\int_0^{1/2}\rho_0g_x^2\,\diff x\bigg)^{1/2}\\
			&\lesssim \bigg(\int_I\rho_0g^2\,\diff x\bigg)^{1/2}\bigg(\int_I\rho_0(g^2+g_x^2)\,\diff x\bigg)^{1/2}.
		\end{aligned}
	\end{equation}
Similarly, one can obtain
	\begin{equation}\label{existence-34}
		\begin{aligned}
			\int_{1/2}^1g^2\,\diff x
			\lesssim \bigg(\int_I\rho_0g^2\,\diff x\bigg)^{1/2}\bigg(\int_I\rho_0(g^2+g_x^2)\,\diff x\bigg)^{1/2}.
		\end{aligned}
	\end{equation}
	Finally, \eqref{Weighted Interpolation} follows from \eqref{existence-33} and \eqref{existence-34}.
	
\end{proof}

 Taking \(g(\cdot)=\sigma(v^{(n)})(\cdot,t)\) in \eqref{Weighted Interpolation}, one has that for each \(t\in[0,T]\) 
	\begin{equation}\label{existence-35}
		\begin{aligned}
	\|\sigma(v^{(n)})(\cdot,t)\|_{L^2(I)}\lesssim \|\sigma(v^{(n)})(\cdot,t)\|_{L_{\rho_0}^2(I)}^{1/2}\|\sigma(v^{(n)})(\cdot,t)\|_{H_{\rho_0}^1(I)}^{1/2}.
	\end{aligned}
\end{equation}
It follows from \eqref{existence-30} that \(\|\sigma(v^{(n)})(\cdot,t)\|_{L_{\rho_0}^2(I)}\to 0\) as \(n\to \infty\).
And \eqref{I-Priori-ellip-33} implies that \(\|\sigma(v^{(n)})(\cdot,t)\|_{H_{\rho_0}^1(I)}\) is uniformly bounded in \(n\geq 1\).  Hence \eqref{existence-35} implies that as \(n\to \infty\)
\begin{align}\label{existence-36}
		v^{(n)}\rightarrow v\quad \text{in}\ C([0,T];L^2(I)).
\end{align}	
Then the standard Gagliardo-Nirenberg interpolation inequality on a bounded domain (see \cite{MR3813967}) shows  that for any \(s\in(0,3)\)
\begin{equation}\label{existence-37}
	\begin{aligned}
		\|\sigma(v^{(n)})(\cdot,t)\|_{H^s(I)}\lesssim \|\sigma(v^{(n)})(\cdot,t)\|_{L^2(I)}^{1-\frac{s}{3}}
		\|\sigma(v^{(n)})(\cdot,t)\|_{H^3(I)}^{\frac{s}{3}}.
	\end{aligned}
\end{equation}
Since \(\|\sigma(v^{(n)})(\cdot,t)\|_{H^3(I)}\) is uniformly bounded in \(n\geq 1\),  it follows from \eqref{existence-36} and \eqref{existence-37} that as \(n\to \infty\)
\begin{align*}
	v^{(n)}\rightarrow v\quad \text{in}\ C([0,T];H^s(I)), \quad \forall\ s\in(0,3),
\end{align*}
which furthermore implies by Sobolev embedding that as \(n\to \infty\)
\begin{align}\label{existence-38}
	v^{(n)}\rightarrow v\quad \text{in}\ C([0,T];C^2(I)).
\end{align}
According to \(\eqref{existence-21}_1\), one has 
\begin{equation*}
	\begin{aligned}
		\rho_0v_t^{(n)}=-\bigg[\frac{\rho_0^2}{(\eta_x^{(n-1)})^2}\bigg]_x
		+\bigg[\frac{\rho_0v_x^{(n)}}{(\eta_x^{(n-1)})^2}\bigg]_x,
	\end{aligned}
\end{equation*}
which, together with \eqref{existence-38}, yields that as \(n\to \infty\)
\begin{align}\label{existence-39}
	\rho_0v_t^{(n)}\rightarrow -\bigg(\frac{\rho_0^2}{\eta_x^2}\bigg)_x
	+\bigg(\frac{\rho_0v_x}{\eta_x^2}\bigg)_x\quad \text{in}\ C([0,T];C(I)).
\end{align}
Due to \eqref{existence-39}, the
distribution limit of \(v_t^{(n)}\) must be \(v_t\) as \(n\to \infty\), so, in particular, \(v\) is a classical solution to the problem \eqref{eq:main-2}.
Moreover, following the standard argument (see \cite{MR1867882}), one may show \(v\in C([0,T]; H^3(I))\cap C^1([0,T]; H^1(I))\).

\section{Proof of Theorem \ref{th:main-1}: Uniqueness}\label{Uniqueness Part}

The following observation will be useful in showing the uniqueness of the classical solution to the problem \eqref{eq:main-2}.

\subsection{A lower-order energy function}\label{lower-order energy function}

Define the following lower-order energy functional:
\begin{equation}\label{energy function-2}
	\begin{aligned}
		\mathcal{E}(t,v)&=\sum_{k=0}^2\|\sqrt{\rho_0}\partial_t^kv\|_{L^2(I)}^2
		+\sum_{k=0}^1\|\sqrt{\rho_0}\partial_t^kv_x\|_{L^2(I)}^2\\
		&+\|\rho_0\partial_tv_{xx}\|_{L^2(I)}^2+\sum_{k=2}^4\big\|\sqrt{\rho_0^k}\partial_x^kv\big\|_{L^2(I)}^2.
	\end{aligned}
\end{equation}
Then one can also close the energy estimates, namely \(\mathcal{E}(t,v)\) satisfies 	
\begin{equation}\label{a priori bound-2}
	\begin{aligned}
		\mathcal{E}(t,v)
		\leq \mathcal{M}_0+CtP(\sup_{0\leq s\leq t}\mathcal{E}^{1/2}(s,v))\quad \mathrm{for\ all}\ t\in [0,T]
	\end{aligned}
\end{equation}
with \(\mathcal{M}_0\) given by
\begin{align*}
	\mathcal{M}_0=P(\mathcal{E}(0,v_0)),
\end{align*}
where \(P\) denotes a generic polynomial of its arguments, and \(C\) is an absolutely constant depending only on \(\|\partial_x^l\rho_0\|_{L^\infty(I)}\ (l=0,1,2,3)\).\\

In fact, \eqref{a priori bound-2} follows from \eqref{I-Priori-time-8}, \eqref{I-Priori-time-12}, \eqref{II-Priori-time-4} in Section \ref{Energy Estimates},  \eqref{I-Priori-time-21}, \eqref{II-Priori-time-10}, \eqref{I-Priori-ellip-5}, \eqref{I-Priori-ellip-third}, \eqref{I-Priori-ellip-22}, and \eqref{I-Priori-ellip-31} in Section \ref{Elliptic estimates}. Indeed, \eqref{a priori bound-2} can be proved in a similar way as \eqref{I-Priori-ellip-32} with some modifications as follows.
In this case, the estimates on highest order
time-derivatives are \eqref{I-Priori-time-8} and
\eqref{I-Priori-time-21}, which can be obtained straightforwardly as \eqref{I-Priori-time-3} and \eqref{I-Priori-time-16}, respectively. Lemma \ref{le:Preliminary-1} and Lemma \ref{le:Preliminary-2} should be replaced by
\begin{lemma}\label{le:Preliminary-3} It holds that
	\begin{align}\label{Preliminary-10}
		\|v(\cdot,t)\|_{H^2(I)}\lesssim \mathcal{E}^{1/2}(t,v).
	\end{align}
Hence,
	\begin{align}
		&\|\eta_{xx}(\cdot,t)\|_{L^2(I)}\lesssim t\sup_{0\leq s\leq t}\mathcal{E}^{1/2}(t,v),\label{Preliminary-11}\\
		&\|v_x(\cdot,t)\|_{L^\infty(I)}
		\lesssim \mathcal{E}^{1/2}(t,v).\label{Preliminary-12}
	\end{align}
\end{lemma}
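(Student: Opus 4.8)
The plan is to prove \eqref{Preliminary-10} by mirroring the proof of the first part of Lemma \ref{le:Preliminary-1}, the only structural change being that the lower-order functional \(\mathcal{E}(t,v)\) of \eqref{energy function-2} retains spatial derivatives only up to \(\partial_x^4v\) (weighted by \(\rho_0^2\)), so the chain of weighted Sobolev inequalities \eqref{ineq:weighted Sobolev} terminates two steps earlier than for \(E(t,v)\). Throughout I would use that \(\rho_0\lesssim 1\) on \(\bar{I}\) by \eqref{eq:intro-3}, which allows any higher power of \(\rho_0\) to be dominated by a lower one, and that \(d(x)\) may be replaced by \(\rho_0(x)\) in \eqref{ineq:weighted Sobolev}.

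For the \(L^2\) and first-order pieces a single application of \eqref{ineq:weighted Sobolev} with \(k=0\) suffices: \(\int_I v^2\,\diff x\lesssim \int_I\rho_0^2(v^2+v_x^2)\,\diff x\lesssim \int_I\rho_0(v^2+v_x^2)\,\diff x\), and likewise \(\int_I v_x^2\,\diff x\lesssim\int_I\rho_0^2(v_x^2+v_{xx}^2)\,\diff x\lesssim \int_I\rho_0 v_x^2\,\diff x+\|\rho_0 v_{xx}\|_{L^2(I)}^2\); both right-hand sides are contained in \(\mathcal{E}(t,v)\) through \(\|\sqrt{\rho_0}v\|_{L^2(I)}^2\), \(\|\sqrt{\rho_0}v_x\|_{L^2(I)}^2\) and \(\|\rho_0 v_{xx}\|_{L^2(I)}^2\). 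For the top-order term \(\|v_{xx}\|_{L^2(I)}\) I would chain \eqref{ineq:weighted Sobolev} twice, first with \(k=0\) to get \(\int_I v_{xx}^2\,\diff x\lesssim\int_I\rho_0^2(v_{xx}^2+(\partial_x^3v)^2)\,\diff x\), and then with \(k=2\) applied to \(w=\partial_x^3v\) to get \(\int_I\rho_0^2(\partial_x^3v)^2\,\diff x\lesssim\int_I\rho_0^4((\partial_x^3v)^2+(\partial_x^4v)^2)\,\diff x\); absorbing powers of \(\rho_0\) as before leaves a bound in terms of \(\|\rho_0 v_{xx}\|_{L^2(I)}^2\), \(\|\rho_0^{3/2}\partial_x^3v\|_{L^2(I)}^2\) and \(\|\rho_0^2\partial_x^4v\|_{L^2(I)}^2\), all present in \(\mathcal{E}(t,v)\). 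Summing the three estimates gives \eqref{Preliminary-10}.

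The two consequences then follow exactly as \eqref{Preliminary-2} and \eqref{Preliminary-3} follow from \eqref{Preliminary-1}. Since \eqref{fluid particle}--\eqref{Lagrangian variable} give \(\eta_{xx}(\cdot,t)=\int_0^t v_{xx}(\cdot,s)\,\diff s\), Minkowski's integral inequality together with \eqref{Preliminary-10} yields \(\|\eta_{xx}(\cdot,t)\|_{L^2(I)}\le\int_0^t\|v_{xx}(\cdot,s)\|_{L^2(I)}\,\diff s\lesssim t\sup_{0\le s\le t}\mathcal{E}^{1/2}(s,v)\), which is \eqref{Preliminary-11}; and the Sobolev embedding \(H^1(I)\hookrightarrow L^\infty(I)\) with \eqref{Preliminary-10} gives \(\|v_x(\cdot,t)\|_{L^\infty(I)}\lesssim\|v_x\|_{H^1(I)}\le\|v\|_{H^2(I)}\lesssim\mathcal{E}^{1/2}(t,v)\), which is \eqref{Preliminary-12}. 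I do not expect a genuine obstacle: the argument is the weighted Sobolev machinery of Section \ref{Some Preliminaries}, and the only point requiring care is that the chain for \(\|v_{xx}\|_{L^2(I)}\) must terminate precisely at the weight \(\rho_0^2\) on \(\partial_x^4v\), which is exactly the highest-order term kept in \(\mathcal{E}(t,v)\)---retaining one fewer spatial derivative in the functional would prevent the closure, in contrast to the higher-order \(E(t,v)\) where weights could be discarded more freely (cf. the footnote after \eqref{I-Priori-ellip-7}).
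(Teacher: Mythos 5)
Your proposal is correct and is essentially the paper's own argument: the paper states Lemma \ref{le:Preliminary-3} as the lower-order replacement of Lemma \ref{le:Preliminary-1}, whose proof is exactly the weighted Sobolev chain \eqref{ineq:weighted Sobolev} you reproduce, terminated at the term \(\|\rho_0^2\partial_x^4v\|_{L^2(I)}^2\) present in \(\mathcal{E}(t,v)\), with the consequences \eqref{Preliminary-11}--\eqref{Preliminary-12} obtained from Minkowski's inequality and the embedding \(H^1(I)\hookrightarrow L^\infty(I)\) just as in \eqref{Preliminary-2}--\eqref{Preliminary-3}. Your observation that the chain for \(\|v_{xx}\|_{L^2(I)}\) must close precisely at weight \(\rho_0^2\) on \(\partial_x^4v\) is exactly the point that makes the lower-order functional sufficient here.
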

\begin{lemma}\label{le:Preliminary-4} It holds that
	{\small \begin{align}\label{Preliminary-13}
			\|\rho_0\partial_x^3v(\cdot,t)\|_{L^2(I)}\lesssim \mathcal{E}^{1/2}(t,v).
	\end{align}}
Consequently,
	{\small \begin{align}
			&\|\rho_0\partial_x^3\eta(\cdot,t)\|_{L^2(I)}
			\lesssim t\sup_{0\leq s\leq t}\mathcal{E}^{1/2}(t,v),\label{Preliminary-14}\\
			&\|\rho_0v_{xx}(\cdot,t)\|_{L^\infty(I)}\lesssim \mathcal{E}^{1/2}(t,v),\label{Preliminary-15}\\
			&\|\rho_0\eta_{xx}(\cdot,t)\|_{L^\infty(I)}
			\lesssim t\sup_{0\leq s\leq t}\mathcal{E}^{1/2}(s,v)\label{Preliminary-16}.
	\end{align}}
\end{lemma}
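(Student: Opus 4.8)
The plan is to follow the proof of Lemma~\ref{le:Preliminary-2} verbatim in spirit, adapting it to the weaker functional $\mathcal{E}(t,v)$ of \eqref{energy function-2}. The essential difference is that the spatial content of $\mathcal{E}$ terminates at the level $\|\sqrt{\rho_0^4}\,\partial_x^4 v\|_{L^2}$, so the chain of weighted Sobolev inequalities \eqref{ineq:weighted Sobolev} must be stopped at $\partial_x^4 v$ rather than carried all the way to $\partial_x^6 v$. The whole of Lemma~\ref{le:Preliminary-4} hinges on the single weighted bound \eqref{Preliminary-13}; the three remaining estimates \eqref{Preliminary-14}--\eqref{Preliminary-16} are then extracted from it together with Lemma~\ref{le:Preliminary-3}, exactly as the consequences were read off in Lemma~\ref{le:Preliminary-2}.

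For \eqref{Preliminary-13} I would apply \eqref{ineq:weighted Sobolev} with $k=2$ to the function $w=\partial_x^3 v$, obtaining
\begin{equation*}
\|\rho_0\partial_x^3 v\|_{L^2}^2 = \int_I \rho_0^2(\partial_x^3 v)^2\,\diff x
\lesssim \int_I \rho_0^4\big[(\partial_x^3 v)^2 + (\partial_x^4 v)^2\big]\,\diff x .
\end{equation*}
Since $\rho_0$ is bounded on $\bar I$ by \eqref{eq:intro-3}, one has $\rho_0^4\leq C\rho_0^3$, so the first term is dominated by $\|\sqrt{\rho_0^3}\,\partial_x^3 v\|_{L^2}^2$, while the second term is exactly $\|\sqrt{\rho_0^4}\,\partial_x^4 v\|_{L^2}^2$; both are summands of $\mathcal{E}(t,v)$, which yields \eqref{Preliminary-13}. (The same single application of \eqref{ineq:weighted Sobolev} underlies \eqref{Preliminary-10} in Lemma~\ref{le:Preliminary-3}, since the top estimate $\int_I v_{xx}^2\,\diff x \lesssim \int_I \rho_0^2[(v_{xx})^2+(\partial_x^3 v)^2]\,\diff x$ reduces $\|v\|_{H^2}$ to $\|\rho_0 v_{xx}\|_{L^2}$ plus the quantity just bounded.)

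Granting \eqref{Preliminary-13}, I would deduce \eqref{Preliminary-14} from $\rho_0\partial_x^3\eta=\int_0^t \rho_0\partial_x^3 v\,\diff s$, which follows from \eqref{fluid particle}--\eqref{Lagrangian variable}, by Minkowski's inequality, producing the factor $t$. For \eqref{Preliminary-15} I would invoke $H^1(I)\hookrightarrow L^\infty(I)$ and the product rule $(\rho_0 v_{xx})_x=(\rho_0)_x v_{xx}+\rho_0\partial_x^3 v$: the first summand is controlled by $\|v_{xx}\|_{L^2}$, hence by $\mathcal{E}^{1/2}$ through \eqref{Preliminary-10}, because $(\rho_0)_x$ is bounded, and the second by \eqref{Preliminary-13}; together with $\|\rho_0 v_{xx}\|_{L^2}\lesssim\|v_{xx}\|_{L^2}$ this gives $\|\rho_0 v_{xx}\|_{H^1}\lesssim\mathcal{E}^{1/2}$. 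Finally \eqref{Preliminary-16} comes from $\rho_0\eta_{xx}=\int_0^t \rho_0 v_{xx}\,\diff s$ and \eqref{Preliminary-15}.

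I do not expect an analytic obstacle here; the one point genuinely to watch, and the reason this cannot simply be quoted from Lemma~\ref{le:Preliminary-2}, is that $\mathcal{E}$ is strictly weaker and the weights can no longer be discarded, as was permitted when working under $E$ (cf.\ the footnote accompanying \eqref{I-Priori-ellip-7}). Consequently only the weighted quantities $\|\rho_0\partial_x^3 v\|_{L^2}$, $\|\rho_0 v_{xx}\|_{L^\infty}$ and $\|\rho_0\eta_{xx}\|_{L^\infty}$ are at our disposal, in contrast to their unweighted counterparts in Lemma~\ref{le:Preliminary-2}, so each downstream use must be checked to rely on these weighted versions only. The estimates close precisely because the spatial part of $\mathcal{E}$ reaches exactly the $\partial_x^4 v$ level required to bound $\partial_x^3 v$ via one application of \eqref{ineq:weighted Sobolev}, with no margin to spare.
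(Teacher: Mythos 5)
Your proposal is correct and takes essentially the same route as the paper: the paper proves Lemma \ref{le:Preliminary-4} implicitly by the same procedure as Lemmas \ref{le:Preliminary-1} and \ref{le:Preliminary-2}, namely one application of the weighted Sobolev inequality \eqref{ineq:weighted Sobolev} (your choice $k=2$, $w=\partial_x^3v$, absorbing $\rho_0^4\lesssim\rho_0^3$) to get \eqref{Preliminary-13}, followed by Minkowski's inequality and $H^1(I)\hookrightarrow L^\infty(I)$ for \eqref{Preliminary-14}--\eqref{Preliminary-16}. Your closing observation that the spatial part of $\mathcal{E}$ stops exactly at the $\rho_0^2\partial_x^4v$ level needed to close this chain, so only the weighted quantities survive, is precisely the point the paper makes in Subsection \ref{lower-order energy function}.
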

In elliptic estimates, one can use Lemma \ref{le:Preliminary-3} and Lemma \ref{le:Preliminary-4} to replace Lemma \ref{le:Preliminary-1} and Lemma \ref{le:Preliminary-2}. On the one hand, the second term on the RHS of \eqref{additial estimate} can be estimated as follows:
\begin{equation*}
	\begin{aligned}
		\bigg|\int_I\rho_0(\rho_0)_x\eta_x^{-5}\eta_{xx}v_x^2\,\diff x\bigg|
		\lesssim
		\|\rho_0\eta_{xx}\|_{L^\infty}\|v_x\|_{L^2}^2
		\leq Ct P(\sup_{0\leq s\leq t}\mathcal{E}^{1/2}(s,v)).
	\end{aligned}
\end{equation*}	
One may also handle the similar term in \eqref{I-Priori-ellip-17} as
\begin{equation*}
	\begin{aligned}
		\bigg|\int_I\rho_0^2(\rho_0)_x\eta_x^{-5}\eta_{xx}v_{xx}^2\,\diff x\bigg|
		\lesssim
		\|\rho_0\eta_{xx}\|_{L^\infty}\|v_{xx}\|_{L^2}^2
		\leq Ct P(\sup_{0\leq s\leq t}\mathcal{E}^{1/2}(s,v)),
	\end{aligned}
\end{equation*}
and the one in \eqref{I-Priori-ellip-30} as
\begin{equation*}
	\begin{aligned}
		\bigg|\int_I\rho_0^3(\rho_0)_x\eta_x^{-5}\eta_{xx}(\partial_x^3v)^2\,\diff x\bigg|
		\lesssim
		\|\rho_0\eta_{xx}\|_{L^\infty}\|\rho_0\partial_x^3v\|_{L^2}^2
		\leq Ct P(\sup_{0\leq s\leq t}\mathcal{E}^{1/2}(s,v)).
	\end{aligned}
\end{equation*}

On the other hand, one can use \(\|\rho_0\eta_{xx}\|_{L^\infty}\) to replace \(\|\eta_{xx}\|_{L^\infty}\) in \eqref{I-Priori-ellip-7}, \eqref{I-Priori-ellip-12}, \eqref{I-Priori-ellip-14}, \eqref{I-Priori-ellip-25}, \eqref{I-Priori-ellip-27.2}, \eqref{I-Priori-ellip-27.5} and  \eqref{I-Priori-ellip-28.5}; and use
\(\|\rho_0v_{xx}\|_{L^\infty}\) to replace \(\|v_{xx}\|_{L^\infty}\) in \eqref{I-Priori-ellip-27.5}; and use
\(\|\rho_0\partial_x^3\eta\|_{L^2}\) to replace \(\|\partial_x^3\eta\|_{L^2}\) in \eqref{I-Priori-ellip-7}, \eqref{I-Priori-ellip-25}, \eqref{I-Priori-ellip-27.5} and  \eqref{I-Priori-ellip-28.5}; and use \(\|\rho_0\partial_x^3v\|_{L^2}\) to replace \(\|\partial_x^3v\|_{L^2}\) in \eqref{I-Priori-ellip-27.2}; and use \(\|\rho_0^2\partial_x^4\eta\|_{L^2}\) to replace \(\|\rho_0\partial_x^4\eta\|_{L^2}\) in \eqref{I-Priori-ellip-25} and \eqref{I-Priori-ellip-28.5}. All of these replacements are possible due to the suitable choice of weights in the corresponding formulae.

\begin{remark} The main reason that we use \(E(t,v)\) instead of \(\mathcal{E}(t,v)\) to define the solution space is to achieve the regularity  \(v\in L^\infty([0,T]; H^3(I))\) which is needed to define the classical solutions. The energy functional \(\mathcal{E}(t,v)\) only gives us the regularity \(v\in L^\infty([0,T]; H^2(I))\), however, which will play an important role in showing the uniqueness of the classical solution to the problem \eqref{eq:main-2} in the next section.
	
\end{remark}

\subsection{Uniqueness of the classical solution to the problem \eqref{eq:main-2}}\label{Uniqueness of the classical solution}

Let \(v\) and \(w\) be two solutions to the problem \eqref{eq:main-2} on \([0,T]\) with initial data \((\rho_0,u_0)\) satisfying the same estimate. Their corresponding flow maps are:
\begin{equation*}
	\begin{aligned}
		\eta(x,t)=x+\int_0^tv(x,s)\,\diff s,\\
		\zeta(x,t)=x+\int_0^tw(x,s)\,\diff s.
	\end{aligned}
\end{equation*}

Set
\begin{equation*}
	\begin{aligned}
		\delta_{vw}=v-w.
	\end{aligned}
\end{equation*}
Then \(\delta_{vw}\) satisfies:
\begin{equation*}
	\begin{cases}
		\rho_0(\delta_{vw})_t+\big[\rho_0^2\big(\frac{1}{\eta_x^2}
		-\frac{1}{\zeta_x^2}\big)\big]_x
		=\big[\rho_0\big(\frac{v_x}{\eta_x^2}-\frac{w_x}{\zeta_x^2}\big)\big]_x&\quad \mbox{in}\ I\times (0,T],\\
		(\delta_{vw},\eta)=(0,e) &\quad \mbox{on}\ I\times \{t=0\},\\
		(\delta_{vw})_x=0 &\quad \mbox{on}\ \Gamma\times (0,T].
	\end{cases}
\end{equation*}
Note that
\begin{equation*}
	\begin{aligned}
		\bigg[\rho_0^2\bigg(\frac{1}{\eta_x^2}-\frac{1}{\zeta_x^2}\bigg)\bigg]_x
		=-\bigg[\frac{\rho_0^2}{\eta_x^2\zeta_x^2}
		\bigg(\int_0^t(\delta_{vw})_x\,\diff s\int_0^t(v_x+w_x)\,\diff s\bigg)\bigg]_x,
	\end{aligned}
\end{equation*}
and
\begin{equation*}
	\begin{aligned}
		\bigg[\rho_0\bigg(\frac{v_x}{\eta_x^2}-\frac{w_x}{\zeta_x^2}\bigg)\bigg]_x
		&=\bigg[\frac{\rho_0}{\eta_x^2\zeta_x^2}
		\bigg((\delta_{vw})_x+2(\delta_{vw})_x\int_0^tw_x\,\diff s
		-2w_x\int_0^t(\delta_{vw})_x\,\diff s\\
		&\quad
		-\int_0^t(\delta_{vw})_x\,\diff s\int_0^t(v_x+w_x)\,\diff s\bigg)\bigg]_x,
	\end{aligned}
\end{equation*}
which contain some additional error terms:
\begin{equation*}
	\begin{aligned}
(\delta_{vw})_x\int_0^tw_x\,\diff s,\quad w_x\int_0^t(\delta_{vw})_x\,\diff s\quad {\rm{and}}\quad \int_0^t(\delta_{vw})_x\,\diff s\int_0^t(v_x+w_x)\,\diff s.
	\end{aligned}
\end{equation*}
Unfortunately, it can be checked that these additional error terms make it difficult to derive an inequality as \eqref{I-Priori-ellip-32} for 
\(E(t,\delta_{vw})\). In other words, it needs some higher-order energy functionals than \(E(t,v)\) and \(E(t,w)\) to control these error terms if one works with \(E(t,\delta_{vw})\).

So we instead work with the lower-order energy functional \(\mathcal{E}(t,\delta_{vw})\) defined by \eqref{energy function-2}, and find that all the error terms can be easily controlled by the energy functionals \(E(t,v)\) and \(E(t,w)\). Therefore we may  get finally (see Subsection \ref{lower-order energy function} for more details) that
\begin{equation}\label{uniques}
	\begin{aligned}
		\sup_{0\leq s\leq t}\mathcal{E}(t,\delta_{vw})
		\leq CtP(\sup_{0\leq s\leq t}\mathcal{E}^{1/2}(s,\delta_{vw}))\quad \mathrm{for\ all}\ t\in [0,T],
	\end{aligned}
\end{equation}
where \(C\) depends on \(E(t,v)\) and \(E(t,w)\). Hence \(\delta_{vw}=0\) follows from \eqref{uniques}.

\section*{Acknowledgment}
Li's research was supported by the National Natural Science Foundation of China (Grant Nos. 11931010 and 11871047), and by the key research project of Academy for Multidisciplinary Studies, Capital Normal University, and by the Capacity Building for Sci-Tech Innovation-Fundamental Scientific Research Funds 007/20530290068. Wang's research was supported by the Grant No. 830018 from China. 
Xin's research was supported by the Zheng Ge Ru Foundation and by Hong Kong RGC Earmarked Research Grants CUHK14301421, CUHK14300819, CUHK14302819, CUHK14300917,  Basic and Applied Basic Research Foundations of Guangdong Province 20201\
31515310002, and the Key Project of National Nature Science Foundation of China (Grant No. 12131010).

\end{document}